\newtheorem{thm}{Theorem}
\newtheorem{cor}[thm]{Corollary}
\newtheorem{prop}[thm]{Proposition}
\newtheorem{lem}[thm]{Lemma}
\theoremstyle{definition}
\newtheorem{defn}[thm]{Definition}
\theoremstyle{remark}
\newtheorem{rk}[thm]{Remark}
\newtheorem{ex}[thm]{Example}
\newcommand{\C}{\mathbb C}
\newcommand{\R}{\mathbb R}
\newcommand{\N}{\mathbb N}
\newcommand{\Z}{\mathbb Z}
\newcommand{\W}{\mathcal W}
\newcommand{\MT}{Theorem~\ref{T:MainTheorem}}
\newcommand{\superhyper}{hyper-hyperbolic}
\begin{document}

\title
{
Generic singularities of line fields on 2D manifolds}

\author{Ugo Boscain\thanks{U. Boscain is with CNRS, 
CMAP, \'Ecole Polytechnique, Universit\'e Paris-Saclay, Palaiseau \& Team GECO, INRIA Saclay,  France,
{\tt ugo.boscain@polytechnique.edu}}, Ludovic Sacchelli\thanks{L. Sacchelli is with  
CMAP, \'Ecole Polytechnique, Universit\'e Paris-Saclay, Palaiseau \& Team GECO, INRIA Saclay,  France,
{\tt ludovic.sacchelli@polytechnique.edu}},
        and Mario Sigalotti
\thanks{M. Sigalotti is with INRIA Saclay, Team GECO \&
CMAP, \'Ecole Polytechnique, Universit\'e Paris-Saclay, Palaiseau,   France,
 {\tt mario.sigalotti@inria.fr}}
 \thanks{The authors are grateful to Andrei Agrachev and Jean Petitot and  for useful and stimulating discussions. This research has been partially supported by the European Research Council, ERC StG 2009 ``GeCoMethods'', contract n. 239748, by the iCODE institute (research project of the Idex Paris-Saclay),  and by the ANR project SRGI ANR-15-CE40-0018. This research benefited also from the support of the ``FMJH Program Gaspard Monge in optimization and operation research'' and from the support to this program from EDF.}
 }

\maketitle 

\begin{abstract} 
Generic singularities of line fields have been studied for lines of principal curvature of embedded surfaces. In this paper we propose an approach to classify generic singularities of general line fields on 2D manifolds. The idea is to identify line fields as bisectors of pairs of vector fields on the manifold, with respect to a given conformal structure. The singularities correspond to the zeros of the vector fields and the genericity is considered with respect to a natural topology in the space of pairs of vector fields.
Line fields at generic singularities turn out to be topologically equivalent to the Lemon, Star and Monstar singularities that one finds at umbilical points.
\end{abstract}

\section{Introduction}
\label{Section:Introduction}

A line field on a 2-dimensional manifold is a smooth map that associates with every $q\in M$ a line ({\it i.e.}, a 1-dimensional subspace) in $T_qM$. 
This is the definition used in \cite{Hopf}, where Hopf extends the classical Poincar\'e-Hopf Theorem to the case of line fields. 
Line fields appear often in nature as for instance in fingerprints \cite{kass1987analyzing,penrose1979topology,wang2007fingerprint}, liquid crystals  \cite{chandrasekhar1992liquid,Dogic2015Orientational_order_of_motile_defects_in_active_nematics,prost1995physics} and  in the pinwheel structure of the visual cortex V1 of mammals  \cite{boscain2014hypoelliptic,BoscainRossi2012,cittisarti,hubel1962receptive,petitot2008neurogeometrie}. Contrarily to what happens for vector fields, where the topology of the manifold forces the vector fields to have zeros,  the topology of the manifold forces line fields  to have singularities ({\it i.e.}, points where a line field is not defined).

Singularities of line fields are visible in nature as shown in Figure~\ref{Fig:Nature}. Two types of singularities are usually observed, one of index $1/2$ and one of index $-1/2$, which have different names depending on the context. 

Following Thom \cite{thom1972stabilite}, one expects that only singularities that do not disappear for small perturbations of the system are  easily observed in nature (see also \cite{Arnold1992}). For this reason, it is important to study which  singularities  are {\em structurally stable}. 
To define what structurally stable means, one needs two ingredients. First one needs a topology on the space of line fields. Second one needs a notion of local equivalence between line fields. The difficulty in studying this problem comes from the fact that there is no natural topology on the set of line fields, since 
the set of singular points depends on the line field itself.

This problem was completely solved in the case of lines of principal curvature on surfaces, since these line fields are given by the embedding of a surface in $\R^3$ and the natural topology is the one given by the embedding. Three types of singularities, called \emph{Lemon}, \emph{Monstar} and \emph{Star} (see \cite{berry1977umbilic}), were identified by Darboux in \cite{Darboux}   (see Figure~\ref{I:LMS}). It was proven in \cite{sotomayor1982structurally} that Lemon, Monstar and Star are the structurally stable singularities of lines of principal curvature with respect to the Whitney $C^3$-topology of immersions of a surface in $\R^3$. 

The purpose of this paper is to study the structurally stable singularities of line fields in a more general context than the one of lines of principal curvature. 
The starting point of the paper is to give a definition of line field  (that we call {\em proto-line-field}) that has a natural associated topology. 
For us a proto-line-field on a Riemannian surface is a pair of vector fields $X$ and $Y$ on $M$. The corresponding line field associated with the proto-line-field is the line field bisecting $X$ and $Y$. The angle is computed using the Riemannian metric, actually a conformal would be sufficient.  The zeros of $X$ and $Y$ become singularities of the associated line field. In Proposition~\ref{P:GlobalDef}, we prove that any line field with singularities can be realized in this way.

With this definition we naturally associate a topology on line fields, that is, the Whitney topology on pairs of vector fields on $M$.
The main result of the paper is that generically a proto-line-field has only structurally stable singularities, which are Lemon, Monstar or Star singularities. Hence the structurally stable singularities for lines of principal curvature are the same as for general proto-line-fields. 

Notice that in nature it is not easy to distinguish between the Lemon and the Monstar singularity since they have the same index (see Section \ref{SubSection:Index_of_line_fields}) and they look quite similar.
This is why the observation of singularities of line fields in nature usually reports only two behaviors, characterized by the index of the singularity. 
One important issue for singularities of line fields (in particular for finger ridges) is their parameterization by a model with few parameters and capable to capture   high  curvature
patterns (\cite{ICC07_Wang}). 
Our definition of proto-line-fields could be useful for such applications, since it could be used to detect fine properties, such as the difference between Lemon and Monstar singularities.

The structure of the paper is the following. In Section \ref{Section:Basic_definitions} we give the definition of proto-line-field, of local structural stability and we state our main result (\MT). In Section \ref{Section:Basic_Properties}, we establish some basic properties of proto-line-fields and we prove that every line field (possibly with singularities) can be realized as a proto-line-field. Moreover, following Hopf we introduce the index of a proto-line-field and we show how to compute it starting from the indices of $X$ and $Y$. We also deduce that the index of a singularity of a generic proto-line-field is $1/2$ or $-1/2$.

The main technical part of the paper consists of Sections \ref{Section:Linear_Euclidean_case} and \ref{Section:Linearization}. In Section \ref{Section:Linear_Euclidean_case} we study the case of linear proto-line-fields in the Euclidean plane and we classify them into three categories corresponding to the three exhibited singularities.  In Section \ref{Section:Linearization} we study the general problem via a blow up and make use of the classification obtained in the linear case to prove the \MT.

In Section \ref{Section:The_role_of_the_metric} we study the role of the Riemannian metric on the identification between a proto-line-field and the corresponding line field. In particular we observe that  bifurcations between Lemon and Monstar singularities can occur by changing the metric.
Finally in Section \ref{SubSection:How_to_construct_a_Riemannian_metric} we show how to reduce the number of ingredients necessary to define proto-line-fields by constructing a Riemannian metric starting from two vector fields.

\begin{figure}[ht!]
\begin{center}

\begin{minipage}{0.5\textwidth}
\begin{center}
        \subfloat[Stopping point]{\includegraphics[width=0.45\textwidth]{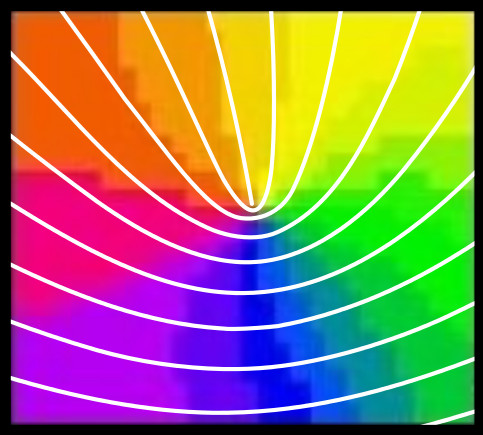}}\hspace{0.1\textwidth}\subfloat[Triple point]{\includegraphics[width=0.45\textwidth]{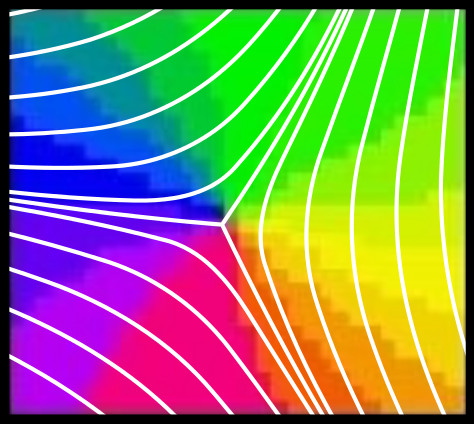}}  
        \end{center}
		\end{minipage}
		\hspace{0.03\textwidth}   
        \begin{minipage}{0.45\textwidth}
The pinwheel structure of the orientation columns of the visual cortex V1 can be modeled as a line field  whose singularities are the pinwheels. Clockwise pinwheels, also called stopping points, have index $1/2$ and counter-clockwise pinwheels, also called triple points, have index $-1/2$.
\end{minipage}

\vspace{\baselineskip}

\begin{minipage}{0.5\textwidth}
\begin{center}
        \subfloat[Core]{\includegraphics[width=0.45\textwidth]{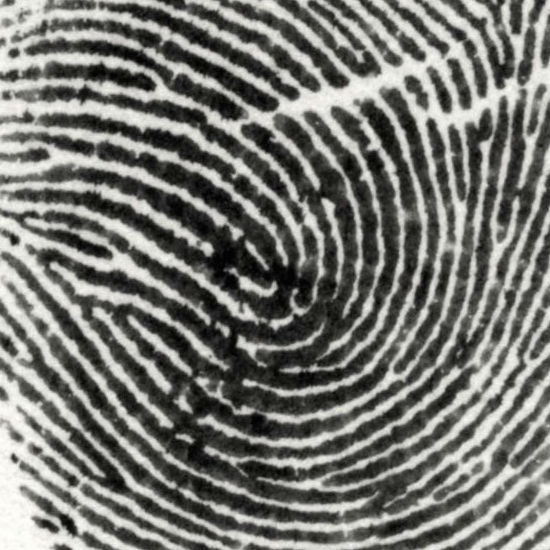}}\hspace{0.1\textwidth}\subfloat[Delta]{\includegraphics[width=0.45\textwidth]{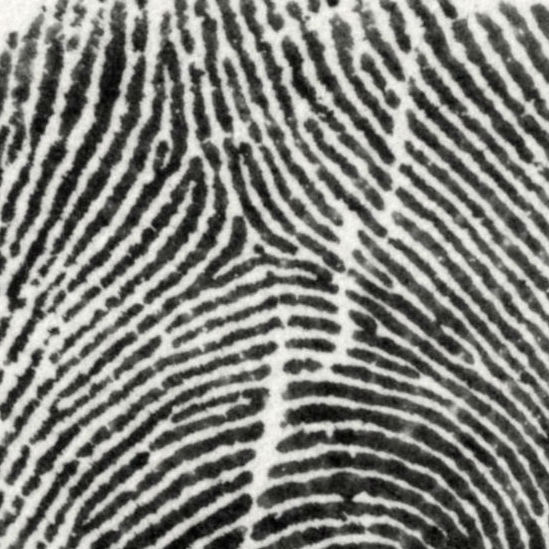}}   
        \end{center}
		\end{minipage}
		\hspace{0.03\textwidth}       
        \begin{minipage}{0.45\textwidth}
In an effort to classify fingerprints, the topology of the underlying line field in the ridge patterns can be used. Isolated singularities of index $1/2$ and $-1/2$ can be observed, and their total index is actually fixed by the number of fingers.
\end{minipage}

\vspace{\baselineskip}

\begin{minipage}{0.5\textwidth}
\begin{center}
        \subfloat[]{\includegraphics[width=0.45\textwidth]{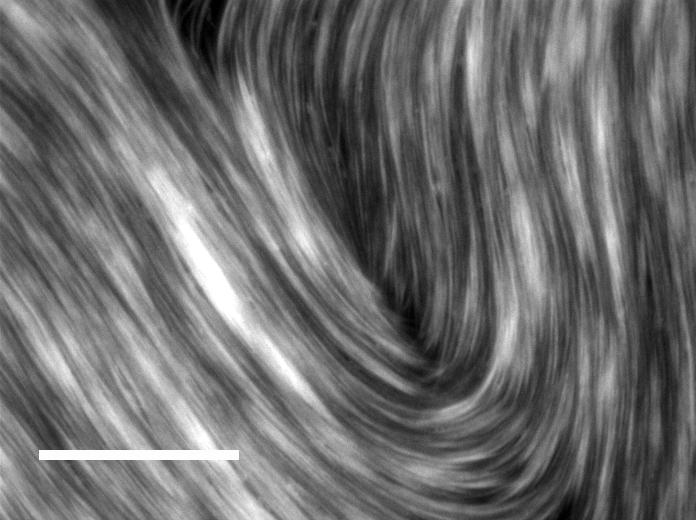}}\hspace{0.1\textwidth}\subfloat[]{\includegraphics[width=0.45\textwidth]{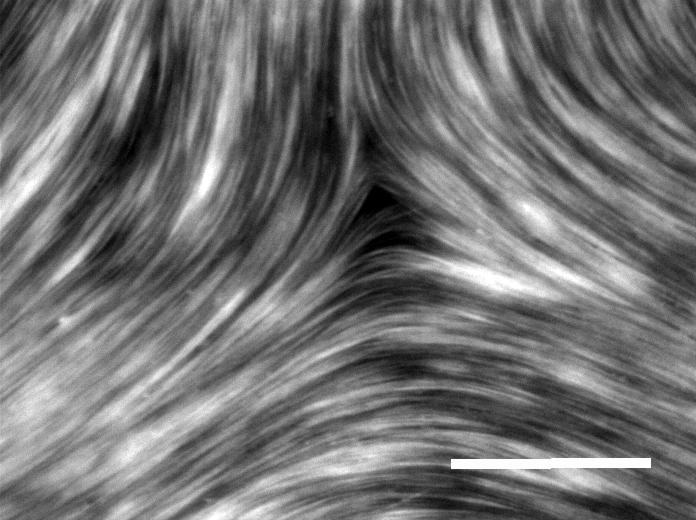}}
        \end{center}
		\end{minipage}
		\hspace{0.03\textwidth}   
        \begin{minipage}{0.45\textwidth}
        Singularities of index $\pm1/2$ can be observed in nematic liquid crystals. Perpendicularly to a 1-dimensional dislocation in the material, the liquid crystal can be modeled as a line field with singularities called disclinations. (Images kindly provided by Stephen J. DeCamp.)
\end{minipage}

\end{center}
\caption{Examples of singularities observed in nature with half-integer indices.}\label{Fig:Nature}
\end{figure}

\section{Basic definitions and statement of the main result}
\label{Section:Basic_definitions}

In this paper, manifolds and vector fields are assumed to be smooth, {\it i.e.}, $C^\infty$.

\begin{defn}
Let $(M,g)$ be a 2-dimensional Riemannian manifold. A \emph{proto-line-field}  is  a pair $(X,Y)$ of vector fields on $M$. Denote by $z_X$ and $z_Y$ the sets of zeros of $X$ and $Y$. The \emph{line field associated with $(X,Y)$}, denoted by  $B(X,Y)$, is the section of $PT(M\backslash (z_X\cup z_Y))$ defined at a point $p\in M\backslash (z_X\cup z_Y)$ as the line $B(X(p),Y(p))$  of $T_pM$ bisecting $(X(p),Y(p))$ for the metric $g(p)$.
\end{defn}

In the definition above, the metric $g$ is only used to measure angles. One could then replace $g$ by a conformal structure.
We are not assuming that $(M,g)$ is orientable. When angles are measured, it is implicitly meant that we are choosing a local orientation. In the following we will denote the angle measured with respect to the metric $g$ between the vectors $V$ and $W$ of $T_p M$ by $\angle_g[V,W]$. This angle should be understood modulo $2\pi$. We use the same notation to define the angle between two lines or between a vector and a line, in this case the angle should be understood modulo $\pi$.

\begin{defn}
A one-dimensional connected immersed submanifold $N$ of $M\backslash (z_X\cup z_Y)$ is said to be an \emph{integral manifold of the proto-line-field $(X,Y)$} if for any point $p$ of $N$, the tangent line to $N$ at $p$ is given by $B(X,Y)$. 
\end{defn}

By involutivity of one-dimensional distributions, $M\backslash (z_X\cup z_Y)$ can be foliated by integral manifolds of $(X,Y)$.

\begin{ex}\label{Ex:LMS}
We introduce here three proto-line-fields whose singularities correspond to the well-known Lemon, Monstar and Star singularities observed for lines of principal curvature. Their respective integral manifolds are represented in Figure \ref{I:LMS}.

The \emph{Lemon proto-line-field} is the pair of vector fields on $(\mathbb{R}^2,\mathrm{Eucl})$ defined by
$$
X_L
(x,y)
=
\begin{pmatrix}
x+y\\y-x
\end{pmatrix},\;
Y_L
(x,y)
=
\begin{pmatrix}
1\\1
\end{pmatrix}.
$$
The \emph{Monstar proto-line-field} is the pair of vector fields on $(\mathbb{R}^2,\mathrm{Eucl})$ defined by
$$
X_M
(x,y)
=
\begin{pmatrix}
x\\3y
\end{pmatrix},\;
Y_M
(x,y)
=
\begin{pmatrix}
1\\0
\end{pmatrix}.
$$
The \emph{Star proto-line-field} is the pair of vector fields on $(\mathbb{R}^2,\mathrm{Eucl})$ defined by
$$
X_S
(x,y)
=
\begin{pmatrix}
x\\-y
\end{pmatrix},\;
Y_S
(x,y)
=
\begin{pmatrix}
1\\0
\end{pmatrix}.
$$

\begin{figure}[ht!]
\begin{center}
        \subfloat[Lemon]{\includegraphics[width=0.28\textwidth]{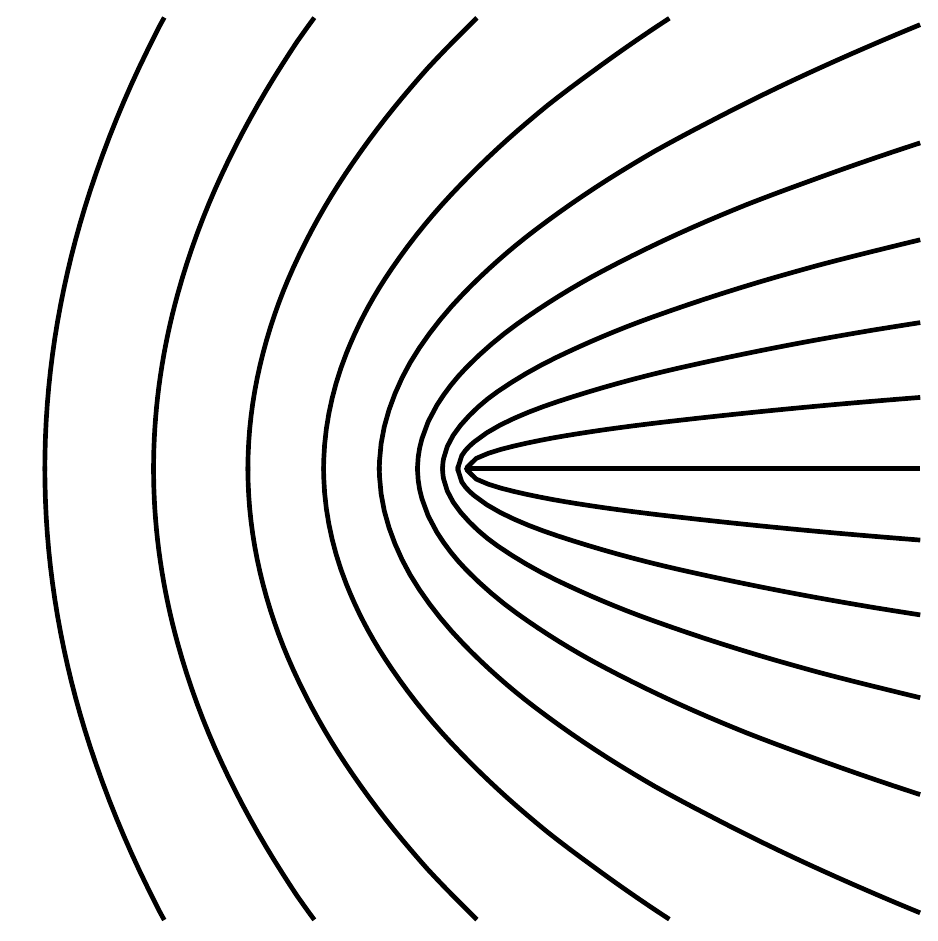}}
        \hspace{0.05\textwidth}
        \subfloat[Monstar]{\includegraphics[width=0.28\textwidth]{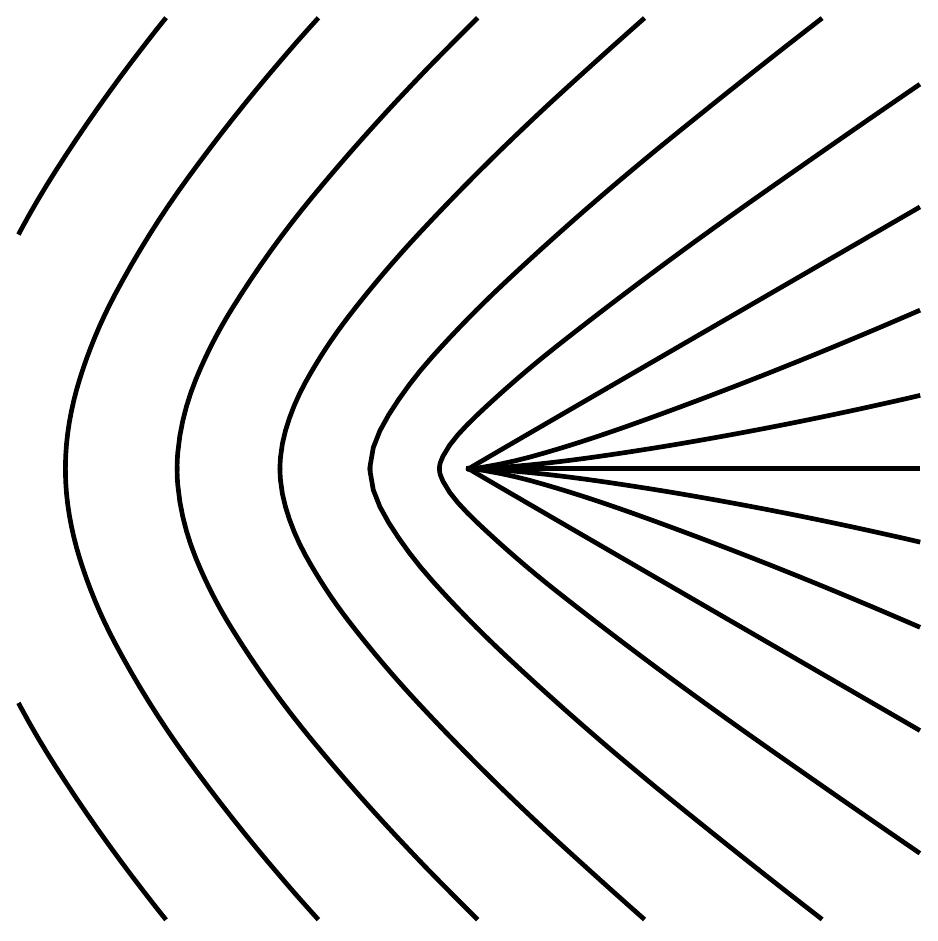}} 
        \hspace{0.05\textwidth}
        \subfloat[Star]{\includegraphics[width=0.28\textwidth]{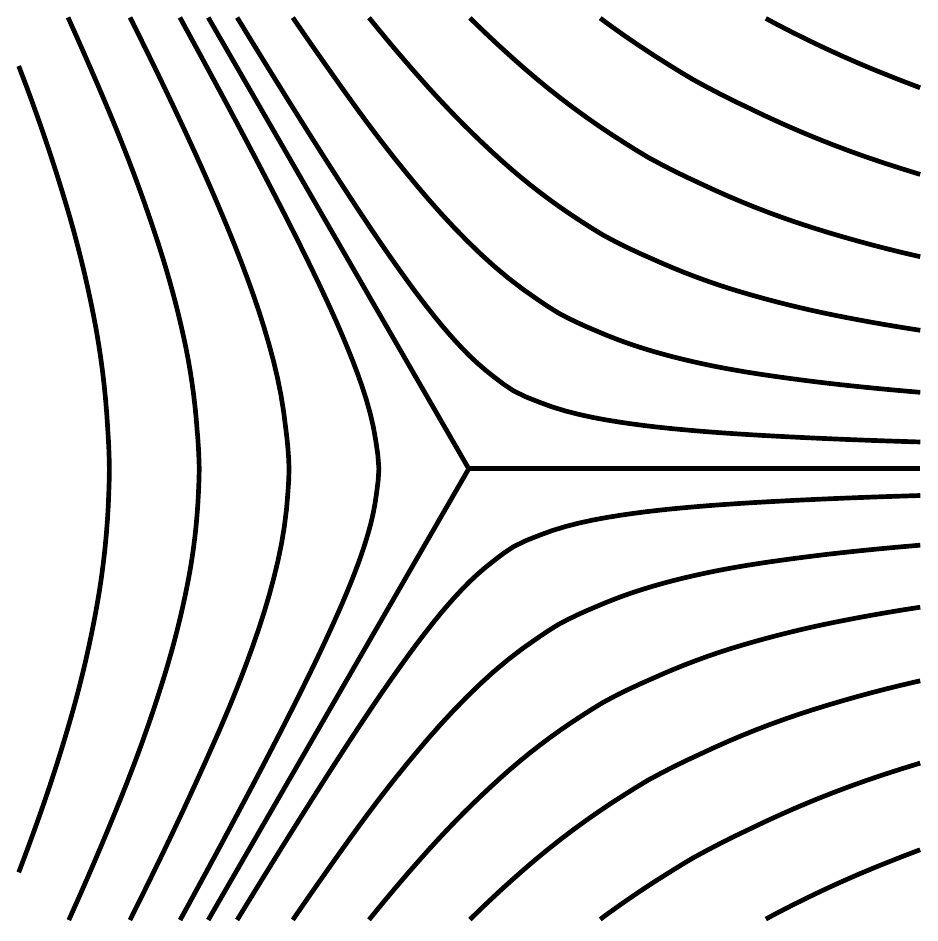}}
        \caption{Integral manifolds of the proto-line-fields of Example  \ref{Ex:LMS}.}
        \label{I:LMS}
\end{center}

\end{figure}

\end{ex}

For $k\in \N$, denote by $\W^k(M)$ the space of pairs of smooth vector fields of $M$ endowed with the product Whitney $C^k$-topology.

\begin{defn}
Let $(X,Y)$  be a proto-line-field on $(M,g)$, and $(X',Y')$ be a proto-line-field on $(M',g')$. Fix $p\in M$ and $p'\in M'$. Then $(X,Y)$ and $(X',Y')$ are said to be \emph{topologically equivalent at $p$ and $p'$} if there exist two neighborhoods $V_p$ and $W_{p'}$ of $p$ and $p'$ respectively and a homeomorphism $h:V_p\rightarrow W_{p'}$, with $h(p)=p'$, which takes the integral manifolds of $(X,Y)$ onto those of $(X',Y')$.
\end{defn}

\begin{defn}
Let $(X,Y)$ be a proto-line-field on $(M,g)$. We say that $(X,Y)$ has a Lemon (respectively, Monstar, Star) singularity at $p\in M$, if it is topologically equivalent to $(X_L,Y_L)$ (respectively, $(X_M,Y_M)$, $(X_S,Y_S)$) at $0$. We say that a singularity of a proto-line-field is \emph{Darbouxian} if it is either a Lemon, a Monstar or a Star. 
\end{defn}

\begin{defn}
A proto-line-field $(X,Y)$ on $M$ is said to be \emph{locally structurally stable at $p\in M$} if for any neighborhood $U_p\subset M$ of $p$ there exists a neighborhood $\mathcal{N}_{(X,Y)}$ of $(X,Y)$ with respect to $\W^1(M)$ such that for any $(X',Y')\in \mathcal{N}_{(X,Y)}$, $(X,Y)$ and $(X',Y')$ are topologically equivalent at $p$ and $q$, for some $q\in U_p$. Moreover, $(X,Y)$ is said to be \emph{locally structurally stable} if it is locally structurally stable at any point $p\in M$.
\end{defn}

Recall that a residual set in a topological space is a countable intersection of open and dense subsets.
We say that a property \emph{holds generically for a proto-line-field in $\W^1(M)$} if there exists a residual set $\mathcal{U}$ in $\W^1(M)$ such that the property is satisfied by every element of $\mathcal{U}$. In the case where $M$ is compact, we could actually replace {\it residual} by {\it open and dense} in the definition of genericity, and all results stated in this paper would still hold true.

\begin{thm}[Genericity theorem]\label{T:MainTheorem}
Generically with respect to $(X,Y)\in \W^1(M)$, the proto-line-field $(X,Y)$ is locally structurally stable and has only Darbouxian singularities.
\end{thm}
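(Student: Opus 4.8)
The plan is to combine a transversality argument (to reduce to a finite list of jets at the singularities) with a blow-up/linearization analysis (to read off the topological type). First I would establish the "generic non-degeneracy" part. The singularities of $B(X,Y)$ are the points of $z_X\cup z_Y$. Using Thom transversality on the $1$-jets of $X$ and of $Y$, generically (i) $z_X$ and $z_Y$ are disjoint finite sets of points at which the corresponding vector field has an invertible linearization, and (ii) these linear parts avoid a suitable "bad" stratified subset of the space of pairs (linear map, nonzero vector) — namely the one on which the linearized bisector field fails to fall into one of the three model classes of Example~\ref{Ex:LMS}. So near each singularity $p$, say $p\in z_X$, one may assume (after choosing coordinates centered at $p$ and using that $Y(p)\neq 0$) that $Y$ is a nonvanishing vector field, hence locally straightened to a constant, and that $X(q)=A q + o(|q|)$ with $A\in GL(2,\R)$; by Proposition~\ref{P:GlobalDef} and the index computation cited in the introduction, the index of such a singularity is $\pm 1/2$.

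Next I would invoke the Euclidean linear classification carried out in Section~\ref{Section:Linear_Euclidean_case}: for $(Aq, v_0)$ with $A\in GL(2,\R)$ and $v_0\neq 0$ constant, the bisector line field is, up to topological equivalence, one of Lemon, Monstar, Star, according to which open stratum of $GL(2,\R)$ (intersected with a condition on $v_0$, which the metric fixes) the matrix $A$ lies in. The complement of these open strata is a closed set of positive codimension in $\W^1(M)$ after pulling back via the jet map, so generically every singularity has linear part in a good stratum. This is where I would use the results of Section~\ref{Section:Linear_Euclidean_case} as a black box.

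The remaining, and I expect main, step is the linearization: showing that the full (nonlinear) bisector field near a singularity with good invertible linear part is topologically equivalent to its linear model. This is the content of Section~\ref{Section:Linearization}, and the natural tool is a (quasi-homogeneous) blow-up at $p$: after blowing up, the line field extends to the exceptional circle, the zeros of the induced field on the circle are hyperbolic (this is exactly what the good-stratum condition guarantees), and one builds the equivalence homeomorphism by matching the (finitely many) separatrices and the hyperbolic sectors between them, then coning back down — a Grobman–Hartman-type argument performed on the blown-up picture. The delicate points are (a) checking the $C^1$-openness of the conditions so that the conclusion is stable under $\W^1(M)$-perturbations (giving local structural stability, not merely the classification), and (b) handling the conformal factor of $g$ uniformly: since only angles enter, one reduces to the Euclidean model by a $C^1$-small, orientation-preserving coordinate change, and one must check the blow-up analysis is insensitive to this. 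Assembling (i)--(ii) with the linearization over the (locally finite, generically) set of singularities, and taking the countable intersection of the resulting open dense sets, yields the residual set on which $(X,Y)$ is locally structurally stable with only Darbouxian singularities, which is \MT.
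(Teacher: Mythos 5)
Your proposal is correct and follows essentially the same route as the paper: Thom transversality to reduce to hyperbolic singularities whose linearizations avoid the codimension-one bad set of Theorem~\ref{T:thmcas}, the Euclidean linear classification of Section~\ref{Section:Linear_Euclidean_case} as the model list, and a polar blow-up at each singularity on which the induced field has only hyperbolic zeros on the exceptional circle, with the equivalence homeomorphism built by matching separatrices and sectors (this is exactly Proposition~\ref{P:blowup} and Lemmas~\ref{L:TopEqLift}--\ref{L:TopEqPolar}). The only cosmetic differences are that the paper constructs the conjugacy cell-by-cell on the skeleton rather than invoking a Grobman--Hartman-type theorem, and works on a $4\pi$-periodic double cover to lift the line field to a vector field.
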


\section{Basic properties of proto-line-fields}
\label{Section:Basic_Properties}

\subsection{Every line field can be realized as a proto-line-field}
\label{SubSection:Every_line_field}

\begin{prop}\label{P:GlobalDef}
Let $(M,g)$ be a 2-dimensional Riemannian manifold, $K$ be a closed subset of $M$ and $L$ be a section of $PT(M\backslash K)$. There exist two vector fields $X$ and $Y$ such that $L=B(X,Y)$.
\end{prop}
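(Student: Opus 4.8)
The plan is to exploit the following elementary description of the bisector: if $X(p)\neq 0$ and $Y(p)\neq 0$, then for a line $\ell\subset T_pM$ one has $B(X(p),Y(p))=\ell$ if and only if $Y(p)$ is a positive multiple of $R_\ell(X(p))$, where $R_\ell$ denotes the orthogonal reflection of $(T_pM,g(p))$ across $\ell$. Indeed, measuring angles in a local $g$-orthonormal frame, $R_\ell$ sends a vector at angle $\theta$ to a vector at angle $2\alpha-\theta$, where $\alpha$ is the angle of $\ell$, and the line bisecting the directions $\theta$ and $2\alpha-\theta$ is precisely the line at angle $\alpha$. Since $R_{L(p)}$ depends only on the line $L(p)$ and not on a choice of direction along it, the family $R_L=(R_{L(p)})_{p\in M\setminus K}$ is a single-valued smooth field of orthogonal involutions on $M\setminus K$. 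Consequently, it is enough to produce a smooth vector field $X$ on $M$ with $z_X=K$ such that $R_LX$, a smooth vector field on $M\setminus K$, extends to a smooth vector field $Y$ on $M$: then $z_Y\subset K$ (as $X$, hence $R_LX$, is nowhere zero on $M\setminus K$), so $z_X\cup z_Y=K$, and $B(X,Y)=B(X,R_LX)=L$ on $M\setminus K$. We may assume $M\setminus K\neq\emptyset$, the case $K=M$ being trivial (take $X=Y=0$).

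I would first fix a nowhere-vanishing smooth vector field $V$ on $M\setminus K$. Such a $V$ exists because every connected component $N$ of $M\setminus K$ is either non-compact — and then carries a nowhere-vanishing vector field, e.g. because $N$ deformation retracts onto a graph — or a compact connected component of $M$, in which case $L|_N$ is a globally defined line field, whence $\chi(N)=0$ by Hopf's theorem \cite{Hopf}, and $N$ again carries a nowhere-vanishing vector field. Set $W:=R_LV$, a smooth nowhere-vanishing vector field on $M\setminus K$. The key step is to choose a damping function $\rho\in C^\infty(M,[0,+\infty))$ with $\rho^{-1}(0)=K$ such that $\rho V$ and $\rho W$ both extend (by $0$ on $K$) to smooth vector fields on $M$. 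Granting this, put $X:=\rho V$ and $Y:=\rho W$. Since $\rho>0$ on $M\setminus K$ while $V$ and $W$ vanish nowhere there, we get $z_X=z_Y=K$; and on $M\setminus K$ one has $Y=\rho\,R_LV=R_L(\rho V)=R_LX$. By the first paragraph, $L=B(X,Y)$.

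The only non-formal point, and the main obstacle, is the existence of $\rho$; this is a routine Whitney-type construction. Exhaust $M\setminus K$ by compact sets $C_1\subset C_2\subset\cdots$ with $C_j\subset\operatorname{int}C_{j+1}$ and $\bigcup_j C_j=M\setminus K$. On the shell $C_{j+1}\setminus\operatorname{int}C_j$, the components of $V$ in a fixed atlas, together with their partial derivatives up to order $j$, are bounded by some constant $M_j<\infty$. Imitating the classical construction of a nonnegative smooth function with prescribed zero set $K$, one builds $\rho_V\geq 0$ with $\rho_V^{-1}(0)=K$ whose partial derivatives up to order $j$ are $\leq 2^{-j}(M_j+1)^{-1}$ on the $j$-th shell; the Leibniz rule then shows that $\rho_V V$ and all its partial derivatives tend to $0$ as one approaches $K$, so $\rho_V V$ extends smoothly by $0$ across $K$. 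Running the same argument with $W$ yields $\rho_W$, and $\rho:=\rho_V\rho_W$ works for both. The essential feature is that $\rho$ is constructed after $L$ (hence $V$ and $W$) is given, so it may be chosen to decay as fast as needed relative to the possibly very wild behaviour of $L$ near the arbitrary closed set $K$.
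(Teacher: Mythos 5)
Your proof is correct and follows essentially the same route as the paper's: the paper also obtains $Y$ by applying to $X$ the fiberwise rotation by $2\angle[X,L]$ (which is exactly your reflection $R_L$, and likewise orientation-independent) and then multiplying by a function vanishing to infinite order on $K$. The only substantive difference is that you spell out the Whitney-type decay estimates needed for $\rho V$ and $\rho W$ to extend smoothly across $K$ — a point the paper leaves implicit in the choice of its damping function $\varphi$ — and you damp both fields instead of taking $X$ globally defined on $M$ from the start; both are harmless variations.
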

\begin{proof}
Let us first fix the vector field $X$.
If $K$ is empty, by Poincar\'e-Hopf Theorem for line fields (see \cite{Hopf}), the Euler characteristic of $M$ is 0. Hence we can take as $X$  a never-vanishing smooth vector field on $M$.
In the case where $K$ is non-empty, we take instead as $X$ any vector field on $M$ vanishing at a single point $q$ belonging to $K$.

In the case in which $M$ is orientable, let $\alpha:M\backslash K \rightarrow \R/\pi\Z$ be the smooth function defined by $\angle[X,L]$ and $\varphi:M \rightarrow \R$ be a smooth function such that $\varphi(p)\neq 0$ for all $p\in M\backslash K$ and $\varphi$ is equal to 0 with all its derivatives on $K$. Define the smooth vector field $Y$ on $M$ by $Y(p)=\varphi(p)R_{2\alpha(p)}(X(p))$, where $R_{\theta}:TM\to TM$ denotes the fiber-wise rotation by an angle $\theta$.
By construction $L=B(X,Y)$. 

In the non-orientable case, even if $\alpha$ is not globally well defined on $M\backslash K$, the vector field $p\mapsto R_{2\alpha(p)}(X(p))$ is. Hence $Y$ can be defined as above and the same conclusion follows.
\end{proof}

\subsection{Index of line fields and hyperbolic singularities}
\label{SubSection:Index_of_line_fields}

Let $U$ be an open subset of $M$ and $p\in U$.
Following Hopf, we define the index of a section $L: U \setminus \{p\}\rightarrow PTM$ at $p$ as follows. Up to restricting $U$, we can assume it to be simply connected and we can consider a never vanishing vector field $Z$ on $U$.
Let $C:[0,1]\rightarrow U$ be a simple closed curve encircling $p$ counterclockwise. Then there exists a map $F:[0,1]\rightarrow TM$ such that $L(C(t))$ is the span of $F(t)$ for every $t\in [0,1]$.
Let $\angle\left[Z,F\right]_{C(t)}$ be the angle between $F(t)$ and $Z(C(t))$ with respect to the Riemannian metric $g$, and let $\delta_C \angle\left[Z,F\right]$ be the total signed variation of this angle on the interval $[0,1]$. We then define $j$ by
\[2\pi j = \delta_C \angle\left[Z,F\right].\]
Since $\angle\left[Z,F\right]_{C(0)}=\angle\left[Z,F\right]_{C(1)}\pmod{\pi}$, $2j$ is an integer and it can be shown that $j$ does not depend on $Z$, $C$ nor on $g$. We say that $j$ is the \emph{index of $L$ at $p$} and we write $\mathrm{ind}_p(L)=j$. We use the same symbol $\mathrm{ind}_p$ to denote the index of a vector field at the point $p$.

The following result holds (see \cite{Hopf}).
\begin{thm}[Poincar\'e-Hopf]
Let $(M,g)$ be a compact, orientable 2-dimensional Riemannian manifold of Euler characteristic $\chi(M)$, and let $L$ be a line field on $M$ with isolated singularities. Let $z_L$ be the set of singularities of $L$ and $j_p$ be the index of $L$ at $p$ for any $p\in z_L$. Then
$$
\sum_{p\in z_L}j_p=\chi(M).
$$
\end{thm}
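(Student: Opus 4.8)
The plan is to reduce the statement to the classical Poincar\'e--Hopf theorem for vector fields, using the realization result of Proposition~\ref{P:GlobalDef} together with an elementary formula relating the index of a bisector line field to the indices of the two generating vector fields.

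\textbf{Step 1 (reduction to two vector fields).} Since $M$ is compact and the singularities of $L$ are isolated, $z_L$ is finite. I would apply Proposition~\ref{P:GlobalDef} with $K=z_L$: inspecting its proof, one gets vector fields $X$ and $Y$ with $L=B(X,Y)$ such that $X$ vanishes at a single point of $z_L$ and $Y$ vanishes exactly on $z_L$. Hence $z_X\cup z_Y=z_L$ is finite, so $X$ and $Y$ have only isolated zeros and the Poincar\'e--Hopf theorem for vector fields applies to each of them, giving $\sum_{p}\mathrm{ind}_p(X)=\sum_{p}\mathrm{ind}_p(Y)=\chi(M)$.

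\textbf{Step 2 (local index formula and summation).} Next I would prove that, at any point $p$ which is an isolated point of $z_X\cup z_Y$,
\[
\mathrm{ind}_p\big(B(X,Y)\big)=\frac12\big(\mathrm{ind}_p(X)+\mathrm{ind}_p(Y)\big).
\]
Fix a coordinate chart around $p$ and a small positively oriented loop $C$ around $p$ on which neither $X$ nor $Y$ vanishes, and let $\theta_X,\theta_Y$ be the angles of $X$ and $Y$ along $C$ measured in the coordinate frame, so that $\delta_C\theta_X=2\pi\,\mathrm{ind}_p(X)$ and $\delta_C\theta_Y=2\pi\,\mathrm{ind}_p(Y)$. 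By definition of the bisector, $B(X,Y)$ is spanned along $C$ by a vector of angle $\frac12(\theta_X+\theta_Y)$ modulo $\pi$; since the index of a line field does not depend on the chosen never-vanishing reference, computing it in the coordinate frame yields $2\pi\,\mathrm{ind}_p(B(X,Y))=\delta_C\big(\frac12(\theta_X+\theta_Y)\big)=\pi\big(\mathrm{ind}_p(X)+\mathrm{ind}_p(Y)\big)$. Summing this identity over $p\in z_L$ (and using that the index of a vector field vanishes away from its zeros, so the sums over $z_L$ coincide with the sums over $z_X$ and $z_Y$) together with Step 1 gives $\sum_{p\in z_L}\mathrm{ind}_p(L)=\frac12\chi(M)+\frac12\chi(M)=\chi(M)$.

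\textbf{Expected obstacle.} Everything here is routine except one point that must be handled carefully in Step 2: the bisector is a \emph{line}, so its angle is only defined modulo $\pi$, and one is tempted to measure angles relative to $X$ itself (or relative to the globally defined field supplied by Proposition~\ref{P:GlobalDef}) rather than relative to a fixed local trivialization; doing so introduces a spurious factor. The computation must be performed in a local coordinate frame and the reference-independence of the line-field index must be invoked to identify the result with $\mathrm{ind}_p(L)$. Once this is set up the proof is immediate. An essentially equivalent route avoids Proposition~\ref{P:GlobalDef}: choose a vector field $Z$ on $M$ vanishing at a single point of $z_L$, set $\alpha=\angle_g[Z,L]$ and $V=R_{2\alpha}(Z)$; then $V$ is defined and nonvanishing on $M\setminus z_L$, extends across each $p\in z_L$ to a vector field with $\mathrm{ind}_p(V)=2\,\mathrm{ind}_p(L)-\mathrm{ind}_p(Z)$, and applying Poincar\'e--Hopf for vector fields to $V$ and to $Z$ yields $\chi(M)=2\sum_{p\in z_L}\mathrm{ind}_p(L)-\chi(M)$, hence the claim. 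This is just the argument above with $X=Z$ and $Y=V$, since $L=B(Z,V)$.
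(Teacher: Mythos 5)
The paper does not prove this statement at all: it is quoted from Hopf's work (``The following result holds (see \cite{Hopf})''), so there is no in-paper argument to compare against. Your reduction to the classical Poincar\'e--Hopf theorem for vector fields, via the realization $L=B(X,Y)$ of Proposition~\ref{P:GlobalDef} and the index formula of Step~2 (which is verbatim Proposition~\ref{P:indices} of the paper, so you could simply cite it), is a legitimate and clean route, and your ``expected obstacle'' correctly identifies the one place where care is needed (measuring the bisector's angle against a fixed local frame rather than against $X$).

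Two genuine wrinkles remain. First, the case $z_L=\emptyset$: the theorem then asserts $\chi(M)=0$, but the proof of Proposition~\ref{P:GlobalDef} in the case $K=\emptyset$ \emph{invokes the Poincar\'e--Hopf theorem for line fields} to produce the nonvanishing $X$, so quoting it there is circular, and your alternative route's instruction ``choose $Z$ vanishing at a single point of $z_L$'' has no meaning when $z_L$ is empty. The fix is easy: pick an arbitrary point $q$, treat it as a removable singularity of index $0$, and run your argument with $K=\{q\}$; the identity $0=\mathrm{ind}_q(L)=\tfrac12(\mathrm{ind}_q(X)+\mathrm{ind}_q(Y))=\chi(M)$ then gives the claim. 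Second, in the alternative route the field $V=R_{2\alpha}(Z)$ does \emph{not} ``extend across each $p\in z_L$'': near $p\neq q$ it has norm $\|Z\|$ bounded away from zero while its direction has nonzero winding, so no continuous extension exists. You must damp it by a factor $\varphi$ vanishing to infinite order on $z_L$, exactly as in the paper's construction of $Y$; with $V=\varphi R_{2\alpha}(Z)$ the stated indices and the conclusion $\chi(M)=2\sum_p\mathrm{ind}_p(L)-\chi(M)$ are correct. Neither issue affects the main line of Steps~1--2, which is sound once these repairs are made.
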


\begin{ex}
Let us construct some examples of sections with arbitrary index using complex numbers. We assume that the indetermination of the logarithm is set on $\mathbb{R}_-$. Then we can define for any half-integer $j\in\frac{1}{2}\mathbb{Z}$ the smooth function 
\[
\begin{matrix}
\Phi_j: &\mathbb{C}\backslash\mathbb{R}_-&\longrightarrow &\mathbb{C}\\
	  &z&\longmapsto&z^j.
\end{matrix}
\]
The logarithm is not defined everywhere on $\mathbb{C}$, but the section defined by $L_j(z)=\arg(\Phi_j(z))\pmod{\pi}$ can be continuously extended on $\mathbb{C}\backslash \{0\}$ for any half-integer $j$. This section has a singularity of index $j$. Furthermore, notice that if $j$ is not an integer, then the section cannot be induced from a continuous vector field that vanishes only at $0$. (See Figure~\ref{Fig:ComplexLineFields}.)

\begin{figure}[!htb]
\begin{center}
         \subfloat[$j=-3/2$]{\includegraphics[width=0.36\textwidth]{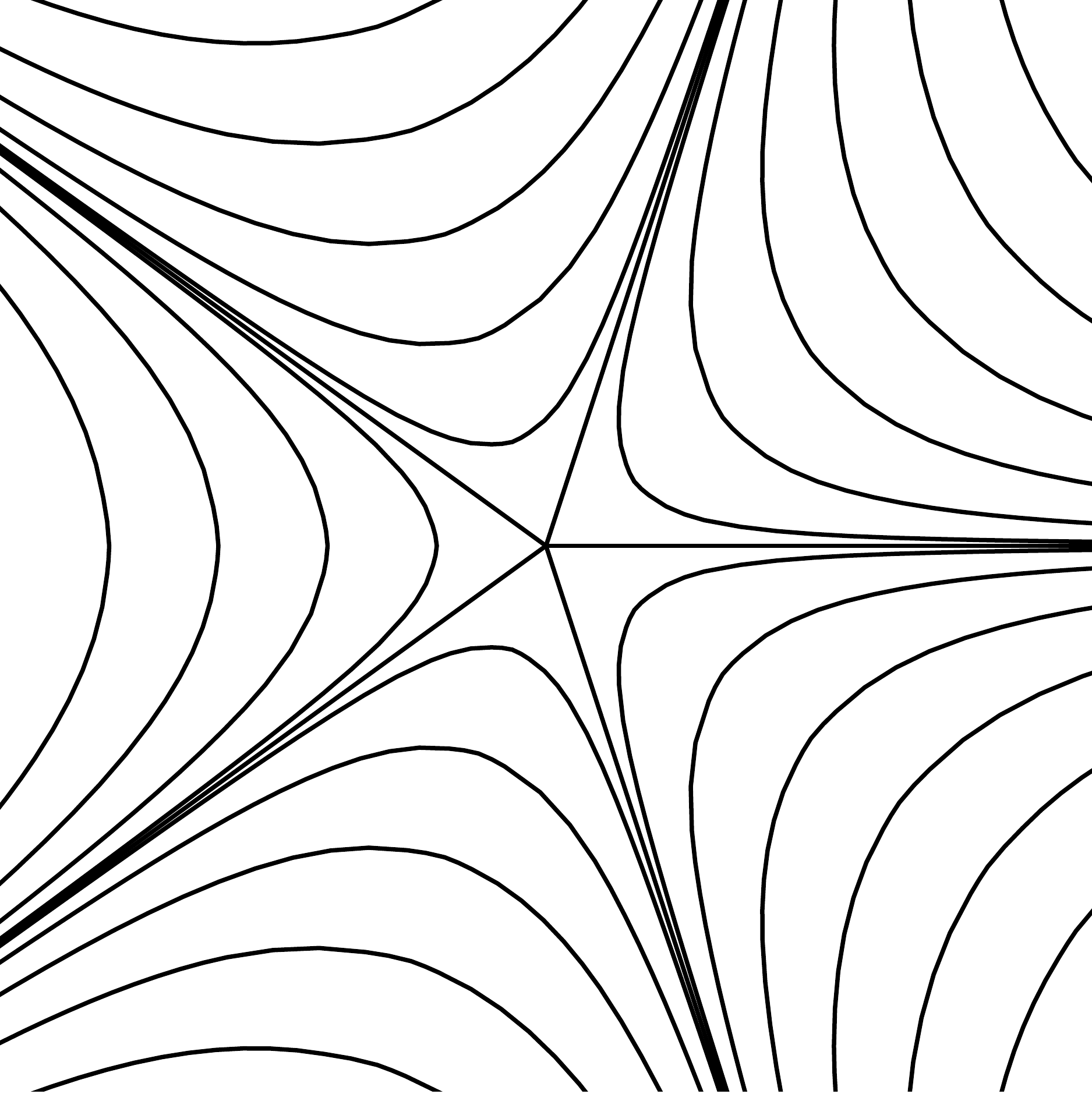}}
        \hspace{0.08\textwidth}
        \subfloat[$j=5/2$]{\includegraphics[width=0.36\textwidth]{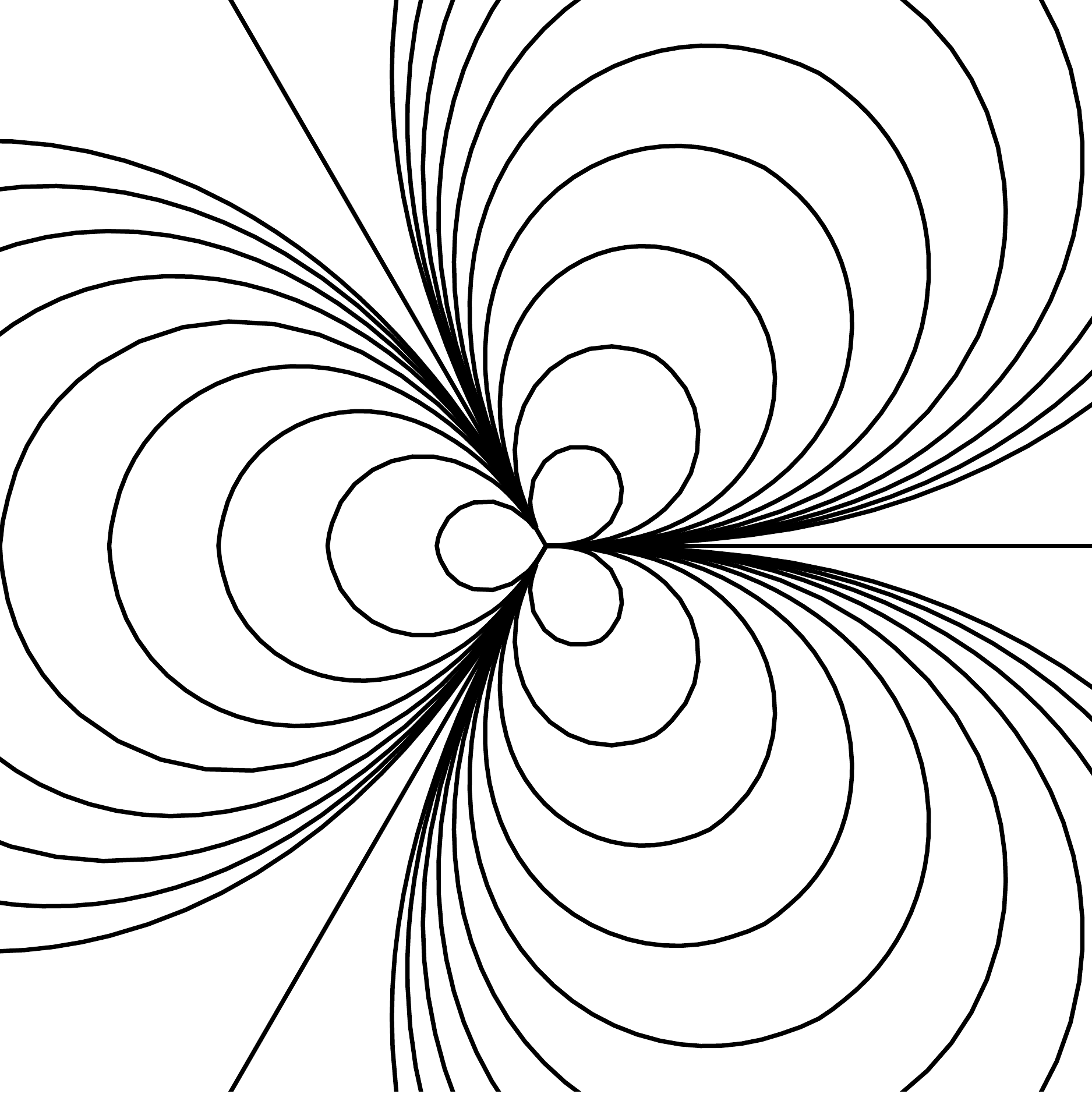}} 
\caption{Examples of integral manifolds of sections of $PT (\C \setminus \{0\})$ near half-integer index singularities.\label{Fig:ComplexLineFields}}
\end{center}
\end{figure}
\end{ex}

\begin{prop}\label{P:indices}
Let $(X,Y)$ be a proto-line-field on $(M,g)$. Given an isolated point $p$ of $z_X\cup z_Y$, we have $\mathrm{ind}_p(B(X,Y))=\frac{1}{2} \left(\mathrm{ind}_p(X)+\mathrm{ind}_p(Y)\right)$.
\end{prop}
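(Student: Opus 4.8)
The plan is to compute the index of $B(X,Y)$ directly from its definition via the angle function, and to express that angle in terms of the angles of $X$ and $Y$ with respect to a fixed reference vector field. Let me set up notation. Around the isolated point $p\in z_X\cup z_Y$, restrict to a simply connected neighborhood $U$ on which $p$ is the only zero of either vector field, choose a never-vanishing smooth vector field $Z$ on $U$, and fix a small simple closed curve $C:[0,1]\to U$ encircling $p$ counterclockwise. On $C$ (which avoids $z_X\cup z_Y$) the angles $\theta_X(t)=\angle_g[Z,X]_{C(t)}$ and $\theta_Y(t)=\angle_g[Z,Y]_{C(t)}$ lift to continuous real-valued functions along $[0,1]$, and by the very definition of the index of a vector field we have $\delta_C\theta_X = 2\pi\,\mathrm{ind}_p(X)$ and $\delta_C\theta_Y=2\pi\,\mathrm{ind}_p(Y)$.

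The key geometric observation is that the bisector line of two vectors making angles $\theta_X$ and $\theta_Y$ with $Z$ is the line making angle $\tfrac{1}{2}(\theta_X+\theta_Y)$ with $Z$ (this is exactly how $B$ was used in the proof of Proposition~\ref{P:GlobalDef}, where $Y$ was built as $\varphi\,R_{2\alpha}(X)$ so that $L$ bisects $X$ and $Y$ precisely when $\alpha=\angle[X,L]$). Hence, choosing $F(t)$ to be a nonzero vector spanning $B(X(C(t)),Y(C(t)))$, we may take the angle function along $C$ to be $\angle_g[Z,F]_{C(t)} = \tfrac{1}{2}(\theta_X(t)+\theta_Y(t)) \pmod{\pi}$, and its total signed variation is
\[
\delta_C \angle_g[Z,F] = \tfrac12\bigl(\delta_C\theta_X + \delta_C\theta_Y\bigr) = \tfrac12\bigl(2\pi\,\mathrm{ind}_p(X) + 2\pi\,\mathrm{ind}_p(Y)\bigr).
\]
Dividing by $2\pi$ gives $\mathrm{ind}_p(B(X,Y)) = \tfrac12(\mathrm{ind}_p(X)+\mathrm{ind}_p(Y))$, as claimed. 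One should note in passing that the quantity $\tfrac12(\mathrm{ind}_p(X)+\mathrm{ind}_p(Y))$ is a priori a half-integer, consistently with $B(X,Y)$ being a line field; and that the independence of the index from the choices of $Z$, $C$ and $g$ has already been recorded in the text.

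The step requiring the most care is the claim that the bisecting-line angle is $\tfrac12(\theta_X+\theta_Y)$ \emph{as a continuous lift}, i.e. that one can choose the spanning vector $F(t)$ and the branch of its angle so that the identity holds not just pointwise mod $\pi$ but as genuine real-valued continuous functions along the whole curve. This is where one must be slightly attentive: the bisector of an ordered pair of vectors is well defined as a \emph{line} (mod $\pi$), and $\tfrac12(\theta_X+\theta_Y)$ is well defined mod $\pi$ precisely because $\theta_X+\theta_Y$ is well defined mod $2\pi$ — so the pointwise identity is unambiguous, and since both sides are continuous in $t$ and agree mod $\pi$, their difference is a continuous $\pi\mathbb{Z}$-valued function, hence constant; choosing the branch of $F$'s angle to match at $t=0$ forces equality for all $t$, and then the variations coincide. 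Everything else is a direct unwinding of the definitions of index given earlier in the section.
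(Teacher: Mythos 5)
Your proposal is correct and follows essentially the same route as the paper: both reduce the computation to the identity $\angle_g[Z,F]=\tfrac12(\angle_g[Z,X]+\angle_g[Z,Y])$ along the encircling curve and then read off the index as half the sum of the total angle variations. Your extra remark about choosing continuous lifts so that the identity holds as real-valued functions (not just mod $\pi$) is a point the paper leaves implicit, but it does not change the argument.
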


\begin{proof}
Fix $p\in M$. Take $C$, $Z$ and $F:[0,1]\rightarrow TM$ as in the definition of the index of a singularity. Then for any $t\in [0,1]$
\begin{align*}
\angle\left[Z(C(t)),F(t)\right]
&=\angle\left[Z(C(t)),X(C(t))\right]+\angle\left[X(C(t)),F(t)\right] 
\\
&=\angle\left[Z(C(t)),X(C(t))\right]+\frac{1}{2} \angle\left[X(C(t)),Y(C(t))\right]
\\
&= \angle\left[Z(C(t)),X(C(t))\right]+\frac{1}{2}\left( \angle\left[Z(C(t)),Y(C(t))\right]+\angle\left[X(C(t)),Z(C(t))\right]\right)
\\
&=\frac{1}{2}\left( \angle\left[Z(C(t)),X(C(t))\right]+ \angle\left[Z(C(t)),Y(C(t))\right]\right),
\end{align*}
since $\angle\left[Z(C(t)),X(C(t))\right]=- \angle\left[X(C(t)),Z(C(t))\right]$.
Hence $$\mathrm{ind}_p(B(X,Y))=\frac{1}{2} \left(\mathrm{ind}_p(X)+\mathrm{ind}_p(Y)\right)$$ by definition of index.
\end{proof}

\begin{defn}
We say that a proto-line-field $(X,Y)$ has a \emph{hyperbolic singularity at a point $p\in M$} if one of the two vector fields has a hyperbolic singularity and the other is non-vanishing at $p$.
\end{defn}

\begin{prop}
A generic proto-line-field has only hyperbolic singularities. In particular its singularities have indices either $1/2$ or $-1/2$.
\end{prop}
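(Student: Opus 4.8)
The plan is to reduce this to two standard genericity facts. First, I claim it is generic in $\W^1(M)$ that the zero sets $z_X$ and $z_Y$ are disjoint, each consists of finitely many points, and at each such point the corresponding vector field has a hyperbolic (nondegenerate) zero — i.e. the linearization at the zero has no eigenvalue on the imaginary axis. This is a classical transversality result: the condition "$X$ has a nondegenerate zero" is the condition that the $1$-jet of $X$ be transverse to the zero section in $J^1(TM)$, which by the Thom transversality theorem holds for $X$ in a residual (indeed open and dense, when $M$ is compact) subset of the vector fields on $M$ with the Whitney $C^1$-topology; "hyperbolic" excludes in addition the (codimension-$\geq 1$) locus where the linearization has a purely imaginary eigenvalue, and "no eigenvalue zero" already follows from nondegeneracy. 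Applying this to $X$ and to $Y$, and intersecting with the residual set on which $z_X\cap z_Y=\emptyset$ (which holds because two generic vector fields have disjoint zero loci, again by transversality applied to the map $p\mapsto(X(p),Y(p))$ into $TM\oplus TM$ relative to the zero section), we get a residual set in $\W^1(M)$ on which every point of $z_X\cup z_Y$ is a hyperbolic singularity of $(X,Y)$ in the sense of the preceding definition.

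Next I would deduce the statement about indices. If $p\in z_X$ (say) is a hyperbolic zero of $X$, then $\mathrm{ind}_p(X)=\pm 1$: the index of a vector field at a nondegenerate zero equals the sign of the determinant of its linearization, which is $+1$ at a hyperbolic source or sink, $-1$ at a hyperbolic saddle. Since $Y$ is non-vanishing near $p$, $\mathrm{ind}_p(Y)=0$. By Proposition~\ref{P:indices}, $\mathrm{ind}_p(B(X,Y))=\tfrac12(\mathrm{ind}_p(X)+\mathrm{ind}_p(Y))=\tfrac12(\pm1+0)=\pm\tfrac12$. The symmetric case $p\in z_Y$ is identical.

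The only real content is the transversality argument, and the one point requiring a little care is that the Whitney topology on an open (or non-compact) manifold makes "residual" the right notion rather than "open and dense": one covers $M$ by countably many coordinate charts with compact closure, checks the jet-transversality condition on each (an open dense condition there), and intersects — this is exactly the standard multijet/Thom argument, and it is also where the remark in the excerpt that residual may be upgraded to open and dense when $M$ is compact comes from. I expect this bookkeeping to be the main obstacle, though it is entirely routine; the index computation is immediate once hyperbolicity is in hand.
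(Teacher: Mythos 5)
Your argument is correct and is essentially the paper's own proof: the paper likewise invokes Thom's transversality theorem to conclude that generically $X$ and $Y$ have only hyperbolic zeros and never vanish simultaneously, and then derives the index statement from Proposition~\ref{P:indices}. Your additional details (index of a hyperbolic zero equals the sign of the determinant of the linearization, the non-vanishing field contributing index $0$, and the chart-by-chart residuality bookkeeping for the Whitney topology) are all accurate elaborations of the same route.
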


\begin{proof}
As a straightforward consequence of Thom's transversality theorem (see, for instance, \cite[p. 82]{hirsch2012differential}), for a generic $(X,Y)\in \W^1(M)$, both $X$ and $Y$ have only hyperbolic singularities, and they do not vanish at the same point. This proves the first part of the statement, while the second follows from Proposition~\ref{P:indices}.
\end{proof}

\subsection{Example: Lines of principal curvature on a triaxial ellipsoid}
\label{SubSection:Example_Lines_of_curvature}

The study of lines of principal curvature on the triaxial ellipsoid is one of the most classical examples of this theory, that dates back to the work of Monge on the subject (see \cite{monge1796lignes,sotomayor2008lines}).

Consider the triaxial ellipsoid $\mathcal{E}$ of equation 
$$
\frac{x_1^2}{a}+\frac{x_2^2}{b}+\frac{x_3^2}{c}=1
$$ 
where we assume that $0<a< b< c$.
In order to introduce the coordinates on $\mathcal{E}$ used by Jacobi in \cite{jacobi1841ligne}, consider the  map from $\R/2\pi\Z\times \R/2\pi\Z$ onto $\mathcal{E}$ given by
\begin{align*} 
x_1 &=\sqrt{  \frac{a}{c - a}} \sin\varphi\sqrt{c\sin^2\psi + b\cos^2\psi - a},
\\
x_2 &= \sqrt{b} \cos\varphi \sin\psi, 
 \\
x_3 &=\sqrt{\frac{ c}{c-a}}  \cos\psi \sqrt{c - b\sin^2\varphi - a\cos^2\varphi}, 
\end{align*}
where $\varphi,\psi\in \R/2\pi\Z$. Although this map is a cover of $\mathcal{E}$ by the torus, the pair $(\varphi,\psi)$ is referred to as \emph{ellipsoidal coordinates}.
Their main interest for us is that level sets of $\varphi$ and $\psi$ ({\it i.e.}, curves on which either $\varphi$ or $\psi$ is constant) are the two sets of lines of curvature on the ellipsoid, and the umbilical points of the surface are situated at the points of coordinates $(\varphi,\psi)=(\pm\pi/2,\pm \pi)$.

The ellipsoidal coordinates are used by Jacobi to express the first integral of motion along geodesics on the ellipsoid. Indeed any geodesics on $\mathcal{E}$ can be described by an equation of the type
\begin{align*} 
\alpha 
&= \int \frac {\sqrt{b\sin^2\varphi + a\cos^2\varphi}} {\sqrt{c - b\sin^2\varphi - a\cos^2\varphi} \sqrt{(b-a)\cos^2\varphi - \beta}}\,\mathrm{d}\varphi
\\
 &\quad - \int \frac {\sqrt{c\sin^2\psi + b\cos^2\psi}} {\sqrt{c\sin^2\psi + b\cos^2\psi - a} \sqrt{(c-b)\sin^2\psi + \beta}} \,\mathrm{d}\psi
\end{align*}
where 
$$\beta = (b-a)\cos^2\varphi\sin^2\theta-(c-b)\sin^2\psi\cos^2\theta$$
is also constant along geodesics, with $\theta$ measuring the angle between the geodesic and 
the level set of $\psi$.

Among the geodesics, those which start at umbilical points of the ellipsoid satisfy very strong properties that we can use to characterize lines of principal curvature as integral manifolds of an explicitly identified proto-line-field.
First, umbilics are the only points on the ellipsoid for which the cut locus is reduced to a single point, the antipodal umbilic, and all geodesics between them have the same length. Therefore, by any non-umbilical point of the ellipsoid pass exactly two minimizing geodesics originating from the two pair of antipodal umbilics. On these geodesics, the constants of motion $\alpha$ and $\beta$ vanish.

Consider two non-antipodal umbilics $\Omega_1$ and $\Omega_2$ and two geodesics $t\mapsto(\varphi_1(t),\psi_1(t))$ and $t\mapsto(\varphi_2(t),\psi_2(t))$ starting at $\Omega_1$ and $\Omega_2$ respectively  and meeting at $(\varphi_1(t_1),\psi_1(t_1))=(\varphi_2(t_2),\psi_2(t_2))$. 
Since 
\begin{align*}
&(b-a)\cos^2\varphi_1(t_1)\sin^2\theta_1(t_1)-(c-b)\sin^2\psi_1(t_1)\cos^2\theta_1(t_1)=0,
\\
&(b-a)\cos^2\varphi_2(t_2)\sin^2\theta_2(t_2)-(c-b)\sin^2\psi_2(t_2)\cos^2\theta_2(t_2)=0,
\\
&\varphi_1(t_1)=\varphi_2(t_2),
\\
&\psi_1(t_1)=\psi_2(t_2),
\end{align*}
we have 
$$\sin^2\theta_1(t_1)=\sin^2\theta_2(t_2)$$
and thus 
$$
\theta_1(t_1)=\pm\theta_2(t_2)\pmod{\pi}.
$$

Since a geodesic is uniquely defined by its tangent line, $\theta_1=\theta_2\pmod{\pi}$ is excluded, and we have that $\theta_1(t_1)=-\theta_2(t_2)\pmod{\pi}$. 
By definition of $\theta_1$ and $\theta_2$, it follows that the level set of $\psi$ 
bisects the angle between the tangent lines of the two geodesics. We can write this fact in terms of proto-line-fields.

Consider the Riemannian exponential $\exp$ on $\mathcal{E}$ and the length $l$ of the geodesics connecting two umbilical points. Let
$ Y(x,y)=\sin\left(\frac{\pi}{l}\sqrt{x^2+y^2}\right)\begin{pmatrix}
x\\y
\end{pmatrix}$  be a vector field on the closed disc $\bar{B}_{\R^2}(0,l)$ of radius $l$, vanishing at $0$ and $\partial B_{\R^2}(0,l)$, and $
X_1=\exp_{\Omega_1}^* Y$, $X_2=\exp_{\Omega_2}^* Y$ be two vector fields on $\mathcal{E}$. By construction, for each $i\in \{1,2\}$, $X_i$ is tangent to the geodesics starting from $\Omega_i$ and vanishes at $\Omega_i$ and $-\Omega_i$. 

Then at the point $(\varphi_0,\psi_0)$, $X_i$ forms an angle $\theta_i\pmod{\pi}$, $i\in\{1,2\}$, with the level set $\{\psi=\psi_0\}$. Since $\theta_1=-\theta_2\pmod{\pi}$, the line bisecting $(X_1,X_2)$ for the metric on $\mathcal{E}$ induced from the Euclidean metric in $\R^3$ is either parallel or orthogonal to the level set $\{\psi=\psi_0\}$. In other words the line field bisecting the proto-line-field $(X_1,X_2)$ is one of the line fields of principal curvature, and the other is bisecting $(-X_1,X_2)$. (See Figure~\ref{Fig:Ellipsoid}.)

\begin{figure}[ht!]
\begin{center}
\includegraphics[width=.48\textwidth]{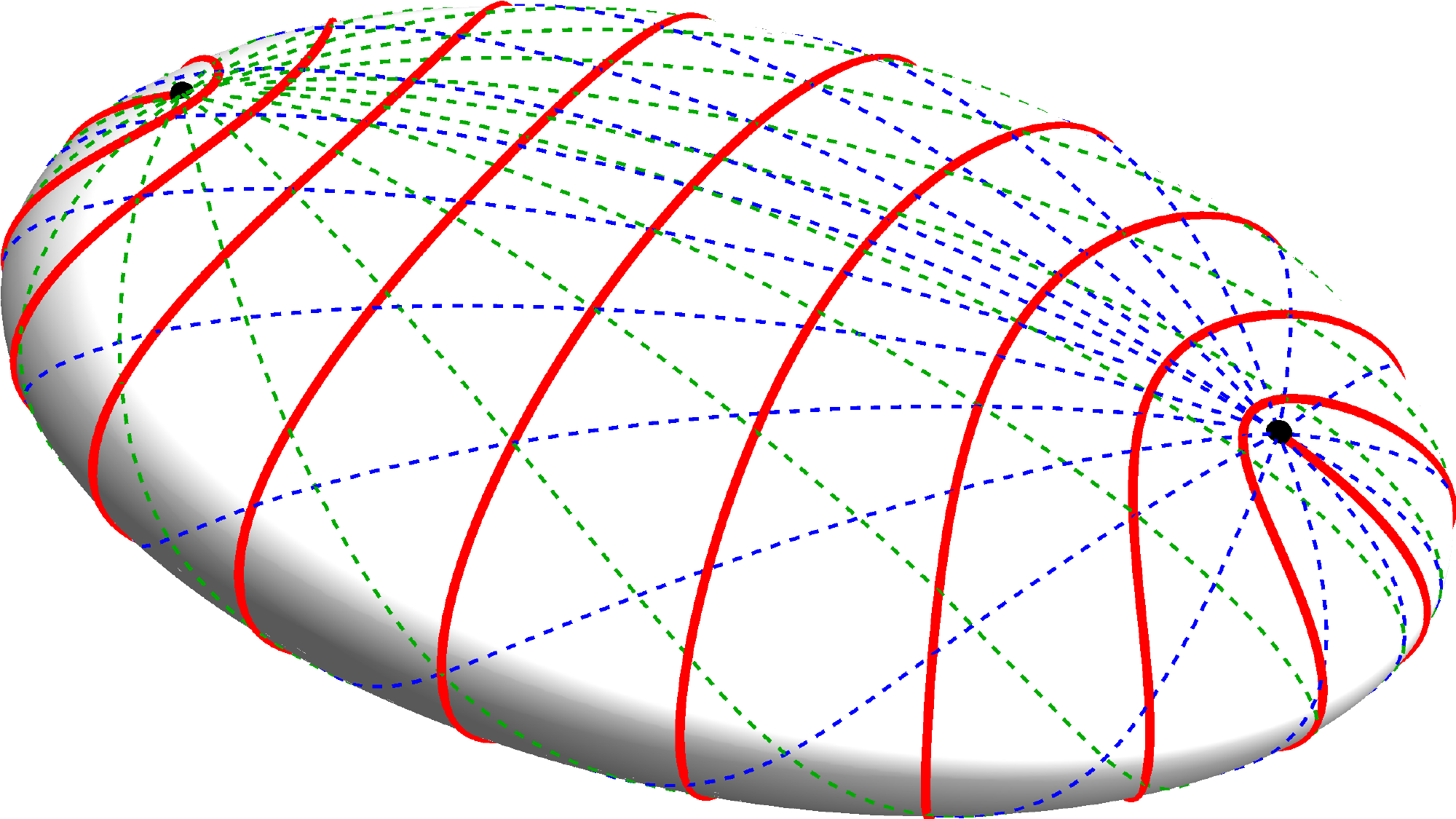}
\caption{Integral lines of $X_1$ and $X_2$ and integral manifolds of $(X_1,X_2)$, shown respectively in blue, green and red.\label{Fig:Ellipsoid}}
\end{center}

\end{figure}

\section{Linear Euclidean case}
\label{Section:Linear_Euclidean_case}

\begin{defn}
Let $(X,Y)$ be a proto-line-field on the Euclidean plane. We say that $(X,Y)$ is a \emph{linear proto-line-field} if one of the two vector fields is linear and the other one is constant. We say that $(X,Y)$ is \emph{linear hyperbolic} if it is linear and has a hyperbolic singularity at $0$.
\end{defn}

\begin{ex}
The three proto-line-fields presented in Example \ref{Ex:LMS} are linear hyperbolic.
\end{ex}

From now on, when considering a linear proto-line-field $(X,Y)$, we assume that $X$ is linear  and $Y$ is constant, that is, we consider $(X,Y)$ as an element of $\mathcal{M}_2(\mathbb{R})\times \mathbb{R}^2$, where $\mathcal{M}_2(\mathbb{R})$ denotes the space of square 2-by-2 matrices.

Consider a linear proto-line-field $L=(X,Y)$.  Along the rays $\{(r \cos\theta, r\sin \theta)\mid r>0\}$,  $\theta\in \R$, the direction of $X$ and $Y$ and thus of $B(X,Y)$ is constant. Hence we can define 
$\phi_L:\R/2\pi\Z \rightarrow \R/\pi\Z$ a parametrization of the direction of $B(X,Y)$ (after fixing an orthonormal basis basis) with 
$$
\phi_{L}(\theta)=
\angle_{\mathrm{Eucl}}
\left[
	\begin{pmatrix}
	1 \\ 0
	\end{pmatrix},B(L(\cos\theta,\sin\theta))
\right].
$$
We call \emph{fixed point of $L$} any point $\theta_0\in \R/2\pi\Z$ such that $\phi_L(\theta_0)=\theta_0 \pmod{\pi}$. A fixed point $\theta_0$ is said to be \emph{attractive} if $\frac{d\phi_L}{d\theta}(\theta_0)>1$, and \emph{repulsive} if $\frac{d\phi_L}{d\theta}(\theta_0)<1$.

\begin{thm}\label{T:thmcas}
Let $X$ be a linear hyperbolic vector field on $\mathbb{R}^2$. Then there exists a set $\mathcal{E}_X \subset \mathbb{R}^2$ made of finitely many lines through the origin such that if $Y\in \mathbb{R}^2\backslash \mathcal{E}_X$ then the linear proto-line-field
$L=(X,Y)$ satisfies one of the following properties
\begin{enumerate}
\item \label{PT:1}$\phi_L$ has a unique fixed point, which is repulsive;
\item \label{PT:2}$\phi_L$ has three fixed points, all in the same half-plane. We can then identify two external fixed points, which are repulsive, and one internal, which is attractive;
\item \label{PT:3}$\phi_L$ has three repulsive fixed points, which are not contained in a single half-plane.
\end{enumerate}

Moreover, the set of linear hyperbolic proto-line-fields which do not fall in one of the stated cases is given by 
\[
\left\{ L \mbox{ linear hyperbolic}\mid \exists\theta_0\in \R/2\pi\Z \mbox{ such that }
\phi_L(\theta_0)=\theta_0 \pmod{\pi},\;
\frac{d\phi_L}{d\theta}(\theta_0)=1
\right\}
\]
and has codimension 1 in $\mathcal{M}_2(\mathbb{R})\times \mathbb{R}^2$.
\end{thm}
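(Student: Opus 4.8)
The plan is to analyze the map $\phi_L$ explicitly by writing $X$ in a normal form and tracking how the bisector direction rotates as $\theta$ traverses the circle. First I would exploit linear conjugacy: any linear hyperbolic $X$ can be brought (by an orientation-preserving linear change of coordinates, which only deforms the conformal structure and hence does not change the topological type of $B(X,Y)$) to one of the standard hyperbolic forms — a saddle, a node (attracting or repelling, possibly with two equal or two distinct real eigenvalues, including the non-diagonalizable Jordan case), or a focus. Since $\mathrm{ind}_0(X)\in\{+1,-1\}$ for a hyperbolic singularity, and $\mathrm{ind}_0(Y)=0$ (as $Y$ is constant and never zero), Proposition~\ref{P:indices} gives $\mathrm{ind}_0(B(X,Y))=\pm\tfrac12$. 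The key point is that the index equals the signed count of fixed points of $\phi_L$ weighted by $\pm1$ according to whether $\frac{d\phi_L}{d\theta}$ at the fixed point is $<1$ or $>1$: indeed $2\pi\,\mathrm{ind}_0(B(X,Y)) = \delta_C\angle[(1,0),B(X,Y)]$, and along the circle $C=\{(\cos\theta,\sin\theta)\}$ this total variation is $\int_0^{2\pi}(\phi_L'(\theta)-1)\,d\theta + 2\pi$, so the graph of $\phi_L$ on the torus $\R/2\pi\Z\times\R/\pi\Z$ winds with slope whose average is $\tfrac12$ (for index $+\tfrac12$) or $-\tfrac12$ (for index $-\tfrac12$), and the fixed points are the intersections with the diagonal.

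Next I would carry out the case-by-case computation of $\phi_L$. Writing $X(v)=Av$ with $A$ in normal form and $Y=(\cos\beta,\sin\beta)$, the bisector direction at angle $\theta$ is the line whose angle is $\tfrac12(\angle[(1,0),A(\cos\theta,\sin\theta)] + \beta) \pmod\pi$, so $\phi_L(\theta)=\tfrac12\big(\arg(A(\cos\theta,\sin\theta)) + \beta\big)\pmod\pi$ up to a harmless choice of branch. For the focus (complex eigenvalues) one checks $\theta\mapsto\arg(A(\cos\theta,\sin\theta))$ has derivative of constant sign, so $\phi_L'$ never hits $1$ and there is a unique fixed point, which is repulsive — this is case~\eqref{PT:1}, consistent with $\mathrm{ind}_0(X)=+1$. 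For the saddle ($\mathrm{ind}_0(X)=-1$) one obtains $\mathrm{ind}_0(B)=-\tfrac12$; here $\arg(A(\cos\theta,\sin\theta))$ is no longer monotone, it has two critical points, and depending on $\beta$ (equivalently, on whether $Y$ lies in one of finitely many exceptional directions $\mathcal{E}_X$) the graph of $\phi_L$ meets the diagonal either once (repulsive — but the index forces this to pair with the geometry of case~\eqref{PT:1}'s analogue) or three times; analyzing the sign of $\phi_L'-1$ at each intersection, and whether all three intersections lie in one half-plane $\{\theta_0, \theta_0+\pi\}$-complement, yields cases~\eqref{PT:2} and~\eqref{PT:3}. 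The node cases are handled similarly, with the Jordan block being the most delicate sub-case. Throughout, the bad set is exactly where some fixed point is \emph{parabolic}, i.e.\ $\phi_L'(\theta_0)=1$, which is a tangency of the graph of $\phi_L$ with the diagonal.

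Then I would establish the codimension-$1$ claim. The set of linear hyperbolic proto-line-fields is an open subset of $\mathcal{M}_2(\R)\times\R^2\cong\R^6$, namely the complement of $\{\det A=0\}\cup\{(\mathrm{tr} A)^2=4\det A,\ A \text{ not scalar}\}$ together with the condition $Y\ne 0$ — but since $Y=0$ would make the proto-line-field degenerate we restrict to $Y$ on the unit circle, or rather we note that $\phi_L$ depends on $Y$ only through its direction and on $A$ only up to positive scaling, so the relevant parameter space is effectively $4$-dimensional (projectivized in $A$ and in $Y$). I would define $\Sigma=\{L : \exists\theta_0,\ \phi_L(\theta_0)=\theta_0\pmod\pi,\ \phi_L'(\theta_0)=1\}$ and show it is the image of the smooth map $(\theta_0, \text{parameters of }A, \beta)\mapsto L$ restricted to the codimension-$2$ locus $\{\phi_L(\theta_0)=\theta_0,\ \phi_L'(\theta_0)=1\}$ in the $(\theta_0,L)$-space; projecting out $\theta_0$ drops one dimension generically, giving codimension $1$. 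To make this rigorous I would check that the two defining equations $F_1(\theta_0,L)=\phi_L(\theta_0)-\theta_0=0$ and $F_2(\theta_0,L)=\phi_L'(\theta_0)-1=0$ have surjective differential in the $L$ directions at every solution — this is a direct computation using that varying $\beta$ shifts $\phi_L$ rigidly (handling $F_1$) and varying the eigenvalue ratio of $A$ changes $\phi_L'$ (handling $F_2$) — so $\{F_1=F_2=0\}$ is a smooth codimension-$2$ submanifold and $\Sigma$, being its projection along the compact circle factor $\theta_0$, is a finite union of codimension-$1$ submanifolds, hence has codimension $1$ and the stated description.

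The main obstacle I expect is the node case, and within it the non-diagonalizable Jordan block $A=\begin{pmatrix}\lambda&1\\0&\lambda\end{pmatrix}$: there $\arg(A(\cos\theta,\sin\theta))$ must be computed by hand, the monotonicity analysis is fussier, and one must verify that generic $Y$ still produces exactly the trichotomy with the fixed points' attractivity/half-plane structure matching the diagonalizable nearby cases; one also has to be careful that the exceptional set $\mathcal{E}_X$ genuinely consists of finitely many lines (it is the preimage under $Y\mapsto\beta$ of the finitely many values of $\beta$ at which a fixed point of $\phi_L$ becomes parabolic, and one must rule out this happening on an interval of $\beta$, which follows from real-analyticity of $\phi_L$ in $(\theta,\beta)$).
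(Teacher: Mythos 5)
There is a genuine gap at your very first step, and it propagates into wrong conclusions in the case analysis. You reduce $X$ to its real Jordan form by a general orientation-preserving linear conjugacy, asserting that this ``only deforms the conformal structure and hence does not change the topological type of $B(X,Y)$''. But $B(X,Y)$ is \emph{defined} through the conformal structure: conjugating $A$ by $S$ while keeping the Euclidean metric is the same as keeping $A$ and replacing the metric by $S^*\mathrm{Eucl}$, and Example~\ref{R:bifurcation} of the paper shows explicitly that such a change turns a Lemon (one fixed point of $\phi_L$) into a Monstar (three fixed points) --- these are not topologically equivalent, since the fixed points of $\phi_L$ count the separatrices. Theorem~\ref{T:thmcas} is a statement about $\phi_L$ computed with the \emph{fixed} Euclidean metric, so only orthogonal changes of basis are admissible; this is exactly why the paper's normal form (Lemma~\ref{suit}) is taken in an orthonormal basis and retains the eccentricity parameter $E$ instead of diagonalizing. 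A concrete casualty: you claim every focus falls in case~\ref{PT:1}. In the rotation-scaling normal form this is true (there $E=1$ and $\phi_L'\equiv 1/2$), but for a focus such as $A=\begin{psmallmatrix}0.1 & 10\\ -0.1 & 0.1\end{psmallmatrix}$ one has $E=10$, the constant $\kappa$ of Proposition~\ref{P:SigneDer} satisfies $\kappa=\tfrac{E^2+1}{A(E^2-1)}<1$, and for an open set of directions $Y$ the focus produces three fixed points, i.e.\ case~\ref{PT:2}. Your inference ``$\tfrac{d}{d\theta}\arg(A(\cos\theta,\sin\theta))$ has constant sign, hence $\phi_L'$ never hits $1$'' is a non sequitur: $\phi_L'\circ F=G'/(2F')$, and $G'>0$ does not prevent $G'=2F'$ unless $E$ is close to $1$.

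The assignment of cases to singularity types is also off for the saddle. When $X$ is a saddle, $\mathrm{ind}_0(B(X,Y))=-1/2$, so a lift of $\phi_L(\theta)-\theta$ drops by $3\pi$ over one period; moreover (this is the index~$-1$ part of Proposition~\ref{P:SigneDer}) one has $(2F-G)'>0$ everywhere, equivalently $\phi_L'<1$ wherever it matters, so $\phi_L-\mathrm{id}$ is strictly monotone and a saddle \emph{always} yields exactly three repulsive fixed points not contained in a half-plane --- always case~\ref{PT:3}, with empty exceptional set. It can never yield a single fixed point (a single repulsive fixed point forces index $+1/2$) and never case~\ref{PT:2}; the Monstar arises only from index-$+1$ singularities (nodes and sufficiently eccentric foci), governed by whether $\kappa\ge 1$ or $\kappa<1$ --- precisely the information your normalization destroys. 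Your index bookkeeping and the transversality sketch for the codimension-$1$ claim are reasonable in outline, but they cannot rescue a case analysis whose inputs are computed in the wrong metric.
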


\begin{figure}[ht]
\begin{center}
\begin{minipage}{.45\linewidth}
\begin{center}
		\subfloat[Case \ref{PT:1}: $\theta_1$ is repulsive.]{
\begingroup
  \makeatletter%
  \providecommand\color[2][]{%
    \errmessage{(Inkscape) Color is used for the text in Inkscape, but the package 'color.sty' is not loaded}%
    \renewcommand\color[2][]{}%
  }%
  \providecommand\transparent[1]{%
    \errmessage{(Inkscape) Transparency is used (non-zero) for the text in Inkscape, but the package 'transparent.sty' is not loaded}%
    \renewcommand\transparent[1]{}%
  }%
  \providecommand\rotatebox[2]{#2}%
  \ifx\svgwidth\undefined%
    \setlength{\unitlength}{0.1\textwidth}%
    \ifx\svgscale\undefined%
      \relax%
    \else%
      \setlength{\unitlength}{\unitlength * \real{\svgscale}}%
    \fi%
  \else%
    \setlength{\unitlength}{\svgwidth}%
  \fi%
  \global\let\svgwidth\undefined%
  \global\let\svgscale\undefined%
  \makeatother%
  \begin{picture}(10.4,5.3)%
    \put(0.2,0.3){\includegraphics[width=\textwidth]{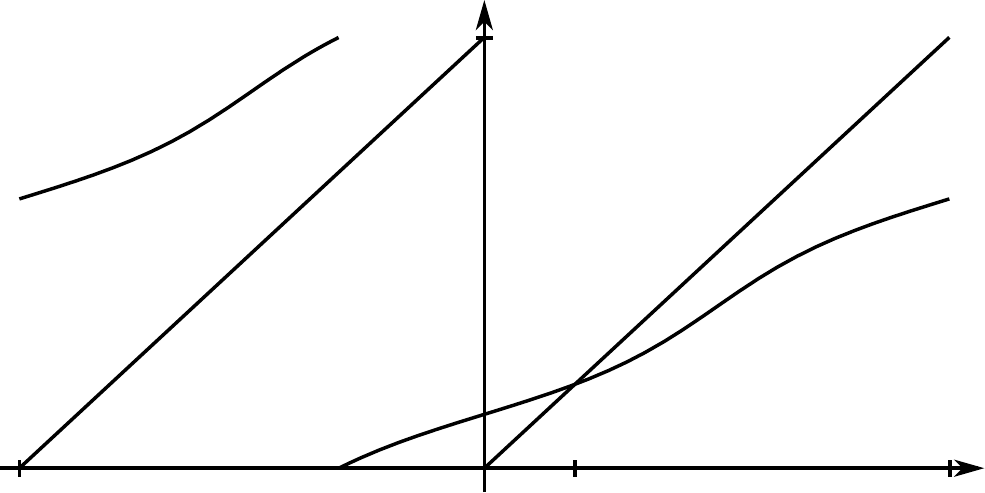}}%
    \put(0.05,0.0){\color[rgb]{0,0,0}\makebox(0,0)[lb]{$-\pi$\phantom{${}_1$}}}%
    \put(5.9,-0.1){\color[rgb]{0,0,0}\makebox(0,0)[lb]{$\theta_1$}}%
    \put(4.8,0){\color[rgb]{0,0,0}\makebox(0,0)[lb]{$0$}}%
    \put(9.75,0.0){\color[rgb]{0,0,0}\makebox(0,0)[lb]{$\pi$\phantom{${}_1$}}}%
    \put(5.25,4.8){\color[rgb]{0,0,0}\makebox(0,0)[lb]{$\pi$}}%
  \end{picture}%
\endgroup%
}
\end{center}
\end{minipage}
\hspace{0.05\textwidth}
\begin{minipage}{0.45\textwidth}
\begin{center}
\subfloat[Case \ref{PT:2}: $\theta_1$ and $\theta_3$ are repulsive, $\theta_2$ is attractive.]{
\begingroup%
  \makeatletter%
  \providecommand\color[2][]{%
    \errmessage{(Inkscape) Color is used for the text in Inkscape, but the package 'color.sty' is not loaded}%
    \renewcommand\color[2][]{}%
  }%
  \providecommand\transparent[1]{%
    \errmessage{(Inkscape) Transparency is used (non-zero) for the text in Inkscape, but the package 'transparent.sty' is not loaded}%
    \renewcommand\transparent[1]{}%
  }%
  \providecommand\rotatebox[2]{#2}%
  \ifx\svgwidth\undefined%
    \setlength{\unitlength}{0.1\textwidth}%
    \ifx\svgscale\undefined%
      \relax%
    \else%
      \setlength{\unitlength}{\unitlength * \real{\svgscale}}%
    \fi%
  \else%
    \setlength{\unitlength}{\svgwidth}%
  \fi%
  \global\let\svgwidth\undefined%
  \global\let\svgscale\undefined%
  \makeatother%
  \begin{picture}(10.4,5.3)%
    \put(0.2,0.3){\includegraphics[width=\textwidth]{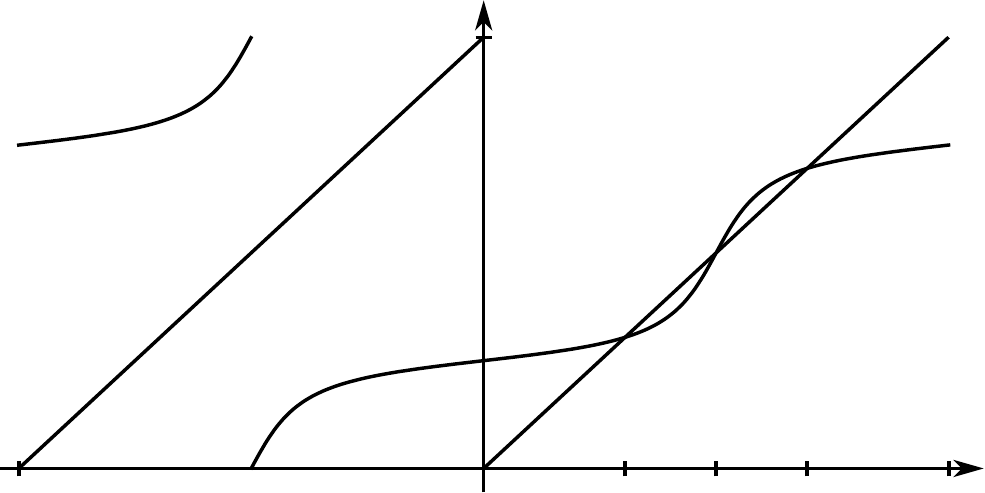}}%
    \put(0.05,0.0){\color[rgb]{0,0,0}\makebox(0,0)[lb]{$-\pi$}}%
    \put(9.75,0.0){\color[rgb]{0,0,0}\makebox(0,0)[lb]{$\pi$}}%
    \put(6.45,-0.1){\color[rgb]{0,0,0}\makebox(0,0)[lb]{$\theta_1$}}%
    \put(7.35,-0.1){\color[rgb]{0,0,0}\makebox(0,0)[lb]{$\theta_2$}}%
    \put(8.3,-0.1){\color[rgb]{0,0,0}\makebox(0,0)[lb]{$\theta_3$}}%
    \put(5.25,4.8){\color[rgb]{0,0,0}\makebox(0,0)[lb]{$\pi$}}%
    \put(4.8,0){\color[rgb]{0,0,0}\makebox(0,0)[lb]{$0$}}%
  \end{picture}%
\endgroup%
}
\end{center}
\end{minipage}

\vspace{0.5\baselineskip}

\begin{minipage}{0.45\textwidth}
\begin{center}
\subfloat[Case \ref{PT:3}: $\theta_1$, $\theta_2$ and $\theta_3$ are repulsive.]{         
\begingroup%
  \makeatletter%
  \providecommand\color[2][]{%
    \errmessage{(Inkscape) Color is used for the text in Inkscape, but the package 'color.sty' is not loaded}%
    \renewcommand\color[2][]{}%
  }%
  \providecommand\transparent[1]{%
    \errmessage{(Inkscape) Transparency is used (non-zero) for the text in Inkscape, but the package 'transparent.sty' is not loaded}%
    \renewcommand\transparent[1]{}%
  }%
  \providecommand\rotatebox[2]{#2}%
  \ifx\svgwidth\undefined%
    \setlength{\unitlength}{0.1\textwidth}%
    \ifx\svgscale\undefined%
      \relax%
    \else%
      \setlength{\unitlength}{\unitlength * \real{\svgscale}}%
    \fi%
  \else%
    \setlength{\unitlength}{\svgwidth}%
  \fi%
  \global\let\svgwidth\undefined%
  \global\let\svgscale\undefined%
  \makeatother%
  \begin{picture}(10.4,5.3)%
    \put(0.2,0.3){\includegraphics[width=\textwidth]{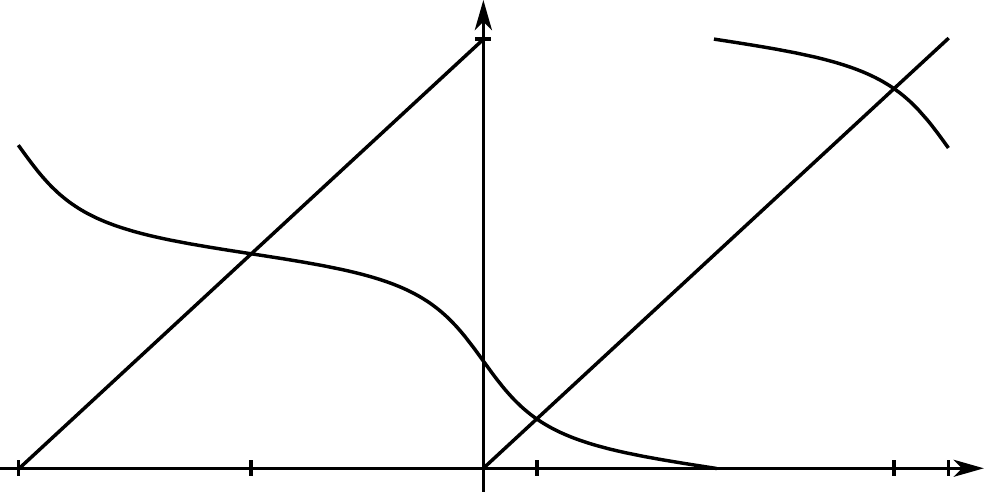}}%
    \put(0.05,0.0){\color[rgb]{0,0,0}\makebox(0,0)[lb]{$-\pi$}}%
    \put(9.75,0.0){\color[rgb]{0,0,0}\makebox(0,0)[lb]{$\pi$}}%
    \put(2.65,-0.1){\color[rgb]{0,0,0}\makebox(0,0)[lb]{$\theta_1$}}%
    \put(5.55,-0.1){\color[rgb]{0,0,0}\makebox(0,0)[lb]{$\theta_2$}}%
    \put(9.2,-0.1){\color[rgb]{0,0,0}\makebox(0,0)[lb]{$\theta_3$}}%
    \put(5.25,4.8){\color[rgb]{0,0,0}\makebox(0,0)[lb]{$\pi$}}%
    \put(4.8,0){\color[rgb]{0,0,0}\makebox(0,0)[lb]{$0$}}%
  \end{picture}%
\endgroup%
}
   \end{center}
\end{minipage}     

\caption[Examples of $\phi_L$ in each of the three cases of Theorem \ref{T:thmcas}.
These specific examples correspond to , , respectively.]{Examples of $\phi_L$ in each of the three cases of Theorem \ref{T:thmcas}.
These specific examples correspond to $L=\left(\begin{pmatrix}
3x\\2y
\end{pmatrix}, 
\begin{pmatrix}
1\\1
\end{pmatrix}\right)$, $L=\left(\begin{pmatrix}
4x\\y
\end{pmatrix}, 
\begin{pmatrix}
0\\1
\end{pmatrix}\right)$, and $L=\left(\begin{pmatrix}
x\\-3y
\end{pmatrix}, 
\begin{pmatrix}
0\\1
\end{pmatrix}\right)$ respectively.
\label{Fig:eachCaseOfThmCas}
}
\end{center}
\end{figure}

The behavior of $\phi_L$ in the three cases mentioned in the theorem is illustrated in Figure~\ref{Fig:eachCaseOfThmCas}. 
In order to show that this is the case we split the proof of Theorem \ref{T:thmcas} in several steps.
We start by proposing a suitable normal form for the vector field $X$. 

\begin{lem}\label{suit}
Let X be a linear vector field with a hyperbolic singularity. Then there exist $E,C>0$ and $\varphi\in [0,2\pi)$ such that, in some orthonormal basis, for every $\theta\in \R$, 
\begin{description}
\item{\bf C1} If the singularity is a focus, then 
$X
\left(
E \cos(\theta),  \sin(\theta)
\right)
=C \begin{pmatrix}
E \cos(\theta+\varphi)\\ \hphantom{E} \sin(\theta+\varphi)\end{pmatrix}$, with $E>1$.

\item{\bf C2} If the singularity is a node, then $X\left(
E \cos(\theta),  \sin(\theta)
\right)=C \begin{pmatrix}
E \cos(\theta-\varphi)\\ \hphantom{E}\sin(\theta+\varphi)\end{pmatrix}$, with $\varphi\in (-\pi/4,\pi/4) \pmod{\pi}$.

\item{\bf C3} If the singularity is a saddle, then $X \left(
E \cos(\theta),  \sin(\theta)
\right)=C \begin{pmatrix}
E \cos(\theta-\varphi)\\ \hphantom{E}\sin(\theta+\varphi)\end{pmatrix}$, with $\varphi\in (\pi/4,3\pi/4)\pmod{\pi}$.
\end{description}
\end{lem}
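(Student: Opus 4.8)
The plan is to rephrase all three normal forms as a single statement about conjugating the matrix of $X$ by a linear map with pairwise orthogonal columns, and to produce such a conjugation by exploiting the centralizer of the relevant model matrix.

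Write $X(v)=Av$, with $A\in\mathcal M_2(\R)$ having no eigenvalue on the imaginary axis, and introduce the model matrices $R_\varphi$ (the rotation by $\varphi$, for a focus) and $S_\varphi:=\left(\begin{smallmatrix}\cos\varphi&\sin\varphi\\\sin\varphi&\cos\varphi\end{smallmatrix}\right)$ (for a node or a saddle), together with $D_E:=\mathrm{diag}(E,1)$. Since $R_\varphi(\cos\theta,\sin\theta)^{T}=(\cos(\theta+\varphi),\sin(\theta+\varphi))^{T}$ and $S_\varphi(\cos\theta,\sin\theta)^{T}=(\cos(\theta-\varphi),\sin(\theta+\varphi))^{T}$, evaluating the linear vector field $w\mapsto(P^{-1}AP)w$ at the point $w=(E\cos\theta,\sin\theta)^{T}$ for all $\theta$ shows that \textbf{C1} (resp.\ \textbf{C2}, \textbf{C3}) is equivalent, in the orthonormal basis given by $P$, to the existence of $C,E>0$ and $\varphi$ with
\[ P^{-1}AP=C\,D_E\,M_\varphi\,D_E^{-1},\qquad M_\varphi\in\{R_\varphi,S_\varphi\}, \]
where $M_\varphi=R_\varphi$ in the focus case and $M_\varphi=S_\varphi$ in the other two (using that the points $(E\cos\theta,\sin\theta)^T$ span $\R^2$). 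In turn this holds if and only if $Q^{-1}AQ=CM_\varphi$ for $Q:=PD_E$, i.e.\ if and only if $A$ is conjugate to the model $CM_\varphi$ by a matrix $Q$ whose columns are orthogonal, the ratio of their lengths being exactly $E$.

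For a focus, $A$ has eigenvalues $a\pm ib$ with $b>0$ and $a\neq 0$; put $C:=\sqrt{a^2+b^2}$ and pick $\varphi\in(0,\pi)$ with $(\cos\varphi,\sin\varphi)=(a,b)/C$, so that $CR_\varphi=\left(\begin{smallmatrix}a&-b\\b&a\end{smallmatrix}\right)$ is the real canonical form of $A$ and some $Q_0$ satisfies $Q_0^{-1}AQ_0=CR_\varphi$. Because $CR_\varphi$ has distinct complex eigenvalues, its real centralizer is exactly the group of nonzero conformal matrices, so $Q_0$ may be replaced by $Q_0R_{\psi_0}$ for any $\psi_0$ without changing the conjugate; choosing $R_{\psi_0}$ to diagonalize the symmetric positive matrix $Q_0^{T}Q_0$, with eigenvalues in decreasing order, makes the columns of $Q:=Q_0R_{\psi_0}$ orthogonal, which is what we wanted. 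Then $E^2$ equals the ratio of the two eigenvalues of $Q_0^{T}Q_0$, so $E\ge 1$, with equality exactly when $Q_0^{T}Q_0$ is scalar, i.e.\ when $Q_0$ is conformal, i.e.\ when $A=CR_\varphi$ is itself conformal; this last situation has codimension two and is non-generic, so off it one gets $E>1$ as stated.

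For a node or a saddle, $A$ has distinct real eigenvalues $\lambda_1,\lambda_2$. The matrix $S_\varphi$ is symmetric with eigenvalues $\cos\varphi\pm\sin\varphi$ along $(1,\pm 1)$, so $CS_\varphi=R_{\pi/4}\,\mathrm{diag}(\lambda_1,\lambda_2)\,R_{\pi/4}^{T}$ forces $2C\cos\varphi=\lambda_1+\lambda_2$ and $2C\sin\varphi=\lambda_1-\lambda_2$ for a suitable labelling; these relations determine $C>0$ and $\varphi$, and since $\cos 2\varphi=\det(A)/C^2$ has the sign of $\det A$ one obtains $\varphi\in(-\pi/4,\pi/4)\bmod\pi$ for a node ($\det A>0$) and $\varphi\in(\pi/4,3\pi/4)\bmod\pi$ for a saddle ($\det A<0$). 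Taking $S$ whose columns form an eigenbasis $w_1,w_2$ of $A$, the conjugation $Q^{-1}AQ=CS_\varphi$ is realized by $Q:=S\,\mathrm{diag}(d_1,d_2)\,R_{\pi/4}^{T}$ for every $d_1,d_2\neq 0$; as $Q^{T}Q$ is the conjugate by $R_{\pi/4}$ of $\mathrm{diag}(d_1,d_2)(S^{T}S)\mathrm{diag}(d_1,d_2)$, it is diagonal precisely when the two diagonal entries of this last matrix coincide, i.e.\ when $d_1^2\|w_1\|^2=d_2^2\|w_2\|^2$, a choice that is always possible; this settles the remaining two cases.

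The only real obstacle is pinning down the exact amount of freedom needed to orthogonalize the columns of the conjugating matrix: this is supplied by the centralizer of the model $CM_\varphi$ — the conformal group in the focus case, the diagonal torus in the node and saddle cases — and the verification that a single element of that centralizer diagonalizes the relevant $2\times 2$ Gram matrix is elementary. The bookkeeping that remains (positivity of $C$, the ranges of $\varphi$) is routine. Finally, the statement tacitly assumes distinct or complex spectrum; the leftover cases are the star node $A=\lambda I$, which fits \textbf{C2} trivially with $\varphi\in\{0,\pi\}$, and the improper node, which is a genuine exception — but neither arises at a hyperbolic singularity of a generic vector field, the only context in which this lemma is subsequently invoked.
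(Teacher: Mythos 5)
Your proof is correct, but it takes a genuinely different route from the paper's. The paper's argument is a single short computation: it first puts the matrix of $X$, by an orthogonal change of basis, into the balanced form $\begin{psmallmatrix}a&b\\c&a\end{psmallmatrix}$ with (in the cases at hand) $b,c\neq 0$, and then reads off $C=\sqrt{a^2+|bc|}$, $E=\sqrt{|b/c|}$ and $\varphi$ directly from the entries; the sign of $bc$ (equivalently, of the discriminant) separates the focus from the node/saddle cases, and $\det A=C^2\cos 2\varphi$ yields the stated ranges of $\varphi$. You instead recast each normal form as a conjugacy $Q^{-1}AQ=CM_\varphi$ with $Q$ having orthogonal columns, pass through the real canonical form, and use the centralizer of the model matrix to diagonalize the Gram matrix $Q^TQ$; this is longer, but it buys an explicit view of exactly where distinct (real or complex) eigenvalues are needed. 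Two remarks on the edge cases you raise. First, your genericity caveat for $E>1$ is less an escape hatch than a sign that the statement should read $E\ge 1$: a conformal $A$ (e.g.\ the Lemon field $X_L$ itself, which equals $\sqrt{2}\,R_{-\pi/4}$) admits only $E=1$ in every orthonormal basis, and the paper's own ``by further imposing $E>1$'' has the same defect; the case $E=1$ is harmless in the sequel. Second, you are right that the improper node is a genuine exception: the paper's proof silently fails there as well (the balanced form then forces $bc=0$), and that case is excluded only by the genericity hypotheses under which the lemma is later invoked.
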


\begin{proof}
In each of the three cases, it is possible to find an orthonormal basis such that the matrix of the linear vector field is of the form 
\[
A=\begin{pmatrix}
a&b\\c&a
\end{pmatrix},
\]
with $b,c\neq 0$.
For $d=\sqrt{|bc|}$ and $E=\sqrt{|b/c|}$, we get for $\varepsilon=\pm 1$ and some $\varphi\in [0,2\pi)$
\[
A
=\sqrt{a^2+d^2}
\begin{pmatrix}
\hphantom{\frac{1}{E}} \cos\varphi &\varepsilon E \sin\varphi\\ \frac{1}{E} \sin\varphi & \hphantom{\varepsilon E}\cos \varphi
\end{pmatrix} .
\]

In case {\bf C1}, we know that the discriminant of $A$ is negative, so that $4\varepsilon \sin^2\varphi<0$, hence $\varepsilon=-1$. By further imposing $E>1$, if necessary by changing the orientation of the basis and by replacing $\varphi$ by $2\pi -\varphi$, we get the stated result.

In the two other cases, we know the discriminant of $A$ to be positive, so that $4\varepsilon \sin^2\varphi>0$, hence $\varepsilon=+1$.
Since $\det A=(a^2+d^2)(\cos^2\varphi-\sin^2\varphi)=(a^2+d^2)\cos 2\varphi$, we know that if the singularity is a node then $\cos 2 \varphi>0$, hence $\varphi\in (-\pi/4,\pi/4) \pmod{\pi}$, and if the singularity is a saddle then $\cos 2 \varphi<0$, hence $\varphi\in (\pi/4,3\pi/4) \pmod{\pi}$.
\end{proof}

In the following we assume an orthonormal basis of $\R^2$ as described in Lemma~\ref{suit} has been fixed.
\begin{lem}\label{lemtT}
Let $(X,Y)$ be a linear proto-line-field. 
Let $X$ be in one of the three normal forms of Lemma \ref{suit} 
and $\alpha=
\angle_{\mathrm{Eucl}}
\left[
	\begin{psmallmatrix}
	1 \\ 0
	\end{psmallmatrix},Y
\right]\in \R/2\pi\Z
$. 
Consider $F,G:\R/2\pi\Z\rightarrow \R/2\pi\Z$ smooth and such that 
$$
F(\theta)=
\angle_{\mathrm{Eucl}}
\left[
	\begin{pmatrix}
	1 \\ 0
	\end{pmatrix},	
	\begin{pmatrix}
	E \cos \theta 
	\\ 
	\hphantom{E}\sin \theta
	\end{pmatrix}
\right],\quad 
G(\theta)=
\angle_{\mathrm{Eucl}}
\left[
	\begin{pmatrix}
	1 \\ 0
	\end{pmatrix},	
	X(E \cos \theta, \sin \theta)
\right]
$$
for every $\theta\in \R/2\pi\Z$.

Then  $\theta_0\in \R/2\pi\Z$ is a fixed point of $\phi_L$ if and only if there exists $\theta_1\in \R/2\pi\Z$ such that  $\theta_0=F(\theta_1)$ and $2F(\theta_1)-G(\theta_1)=\alpha \pmod{2\pi}$.
Moreover $\theta_0$ is attractive if $(2F-G)'(\theta_1)<0$, and repulsive if $(2F-G)'(\theta_1)>0$.
\end{lem}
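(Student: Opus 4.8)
The plan is to identify $F$ as the change of variables between the angular parametrization of rays implicit in the definition of $\phi_L$ and the ellipse parametrization $\theta\mapsto(E\cos\theta,\sin\theta)$ from the normal forms of Lemma~\ref{suit}, and then to read both assertions off an explicit formula for $\phi_L\circ F$.

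First I would record the elementary properties of $F$. The point $(E\cos\theta,\sin\theta)$ lies on the ray of angular coordinate $F(\theta)=\angle_{\mathrm{Eucl}}\!\left[(1,0),(E\cos\theta,\sin\theta)\right]$, and a direct computation gives $F'(\theta)=E/(E^{2}\cos^{2}\theta+\sin^{2}\theta)>0$. Hence $F$ lifts to an increasing diffeomorphism of $\R$ commuting with the translation by $2\pi$, so $F\colon\R/2\pi\Z\to\R/2\pi\Z$ is an orientation-preserving diffeomorphism; in particular every $\theta_0$ is uniquely of the form $\theta_0=F(\theta_1)$.

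Next I would compute the direction of $B(X,Y)$. At $p_\theta=(E\cos\theta,\sin\theta)$ the vector $X(p_\theta)$ is nonzero, since $X$ vanishes only at the origin, and points in the direction $G(\theta)$, while $Y$ is a nonzero constant vector pointing in the direction $\alpha$; therefore the bisecting line $B(X(p_\theta),Y)$ makes the angle $\tfrac12\!\left(G(\theta)+\alpha\right)$ modulo $\pi$ with the first basis vector. One should check once that this is well defined: replacing $G(\theta)$ or $\alpha$ by a representative differing by $2\pi$ changes $\tfrac12(G(\theta)+\alpha)$ by a multiple of $\pi$, leaving the line unchanged, and the two angle-bisecting rays of a pair of vectors differ by $\pi$, hence span the same line. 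Since $X$ is linear and $Y$ constant, $B(X,Y)$ is constant along each ray (as observed before the theorem), so evaluating $\phi_L$ on the ray through $p_\theta$, whose angular coordinate is $F(\theta)$, gives the identity
\[
\phi_L\!\left(F(\theta)\right)=\tfrac12\!\left(G(\theta)+\alpha\right)\pmod\pi,\qquad\theta\in\R/2\pi\Z .
\]

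Both claims of the lemma then follow. A point $\theta_0=F(\theta_1)$ is a fixed point of $\phi_L$ precisely when $\tfrac12(G(\theta_1)+\alpha)=F(\theta_1)\pmod\pi$, i.e.\ when $2F(\theta_1)-G(\theta_1)=\alpha\pmod{2\pi}$. For the stability dichotomy I would differentiate the identity (all maps being smooth circle-valued maps, a common smooth lift exists), obtaining $\phi_L'\!\left(F(\theta)\right)F'(\theta)=\tfrac12 G'(\theta)$, hence at the fixed point
\[
\phi_L'(\theta_0)-1=\frac{G'(\theta_1)}{2F'(\theta_1)}-1=-\,\frac{(2F-G)'(\theta_1)}{2F'(\theta_1)} .
\]
Because $F'(\theta_1)>0$, this shows $\phi_L'(\theta_0)>1$ (i.e.\ $\theta_0$ attractive) iff $(2F-G)'(\theta_1)<0$, and $\phi_L'(\theta_0)<1$ (i.e.\ $\theta_0$ repulsive) iff $(2F-G)'(\theta_1)>0$. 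The argument is entirely elementary; the only point demanding care — and where a sloppy argument would fail — is the bookkeeping of the modulo-$\pi$ versus modulo-$2\pi$ identifications, namely the bisector formula and the passage from the $\R/\pi\Z$-valued $\phi_L$ to the $\R/2\pi\Z$-valued equation $2F-G=\alpha$, together with keeping the orientations consistent so that $F'>0$. I expect this to be the only genuine obstacle.
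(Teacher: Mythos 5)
Your proposal is correct and follows essentially the same route as the paper: establish the key identity $\phi_L\circ F=\tfrac12 G+\tfrac12\alpha\pmod\pi$, use $F'>0$ to pass fixed points of $\phi_L$ through $F$, and differentiate to relate the sign of $\phi_L'-1$ to that of $-(2F-G)'$. Your extra care with the modulo-$\pi$ versus modulo-$2\pi$ bookkeeping and the bisector formula only makes explicit what the paper leaves implicit.
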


\begin{proof}
By its definition, $F$ is increasing and $F'>0$.
By definition of $F$, $G$ and $\alpha$, we have 
\begin{equation*}
\phi_L\circ F(\theta)=\frac{1}{2}G(\theta)+\frac{1}{2}\alpha\pmod{\pi}.
\end{equation*}
Fixed points of $\phi_L$ are then the images by $F$ of the solutions $\theta$ of
\begin{align}
\frac{1}{2}G(\theta)+\frac{1}{2}\alpha&=F(\theta) \pmod{\pi}\nonumber
\shortintertext{that is,}
2F(\theta)-G(\theta)&=\alpha \pmod{2\pi}, \label{eqEgal}
\end{align}
which proves the first part of the statement.
Since $F'>0$, the sign of $\phi_L'-1=(\phi_L-\mathrm{id})'$ is the sign of $(\phi_L\circ F-F)'=\frac{1}{2}(G-2F)'$, which proves the second part of the statement.
\end{proof}

The idea is now to study the variations of $2F-G$ to show that, depending on the values taken by $\alpha$ and the index of the singularity, we fall in one of the cases stated in Theorem \ref{T:thmcas}.

\begin{prop}\label{P:SigneDer}
Under the assumptions of Theorem \ref{T:thmcas}, if the singularity of $X$ has index 1, then there exist two constants $\Phi\in [-\pi,\pi)$ and $\kappa>0$ such that the sign of $(2F-G)'(\theta)$ is the sign of $\cos(2\theta+\Phi)+\kappa$. If the singularity of $X$ has index $-1$, then $(2F-G)'>0$ everywhere on $[-\pi,\pi)$.
\end{prop}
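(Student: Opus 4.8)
The plan is to compute $F$ and $G$ explicitly from the normal forms of Lemma~\ref{suit}, differentiate, and reduce the sign of $(2F-G)'$ to a trigonometric inequality. First recall that for a curve $\theta\mapsto(E\cos\theta,\sin\theta)$ the angle function satisfies $\tan F(\theta)=\frac{\sin\theta}{E\cos\theta}$, so differentiating gives $F'(\theta)=\frac{E}{E^2\cos^2\theta+\sin^2\theta}$, a strictly positive, $\pi$-periodic function with average $1$ over a half-period (since $\int_0^\pi F' = \pi$). In cases \textbf{C2} and \textbf{C3} we have $G(\theta)$ equal to the angle of $(E\cos(\theta-\varphi),\sin(\theta+\varphi))$, and in case \textbf{C1} of $(E\cos(\theta+\varphi),\sin(\theta+\varphi))$. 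The $\pm1$-index dichotomy enters here: the focus (\textbf{C1}) and the node (\textbf{C2}) have index $+1$, while the saddle (\textbf{C3}) has index $-1$, and this is exactly the sign of the determinant, which via $\varphi$ translates into whether $G$ is monotone or not.

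For the index $-1$ case (\textbf{C3}, saddle), the vector field $X$ reverses orientation along a small loop, so $G$ is \emph{decreasing}: more precisely, one computes $G'(\theta)$ and finds, using $\varphi\in(\pi/4,3\pi/4)\pmod\pi$, that $G'<0$ everywhere. Then $(2F-G)'=2F'-G'$ is a sum of a strictly positive term and a strictly positive term, hence positive everywhere, giving the second assertion immediately. The first step here is just to verify $G'<0$ from the explicit formula; the key input is that $\cos 2\varphi<0$ in the saddle case, which controls the sign of the relevant numerator.

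For the index $+1$ case, I would compute $G'(\theta)$ explicitly. Writing $u=\theta+\varphi$ (focus) or using the shifted arguments (node), one gets $G'$ as a ratio whose numerator is $E$ (a constant, because the map $\theta\mapsto X(E\cos\theta,\sin\theta)$ is, up to scaling, again of the form $(E\cos(\cdot),\sin(\cdot))$ with a shifted argument in \textbf{C1}, and a similar but sign-twisted shift in \textbf{C2}) and whose denominator is of the form $E^2\cos^2(\cdot)+\sin^2(\cdot)$. Then $(2F-G)'(\theta)$ becomes a linear combination of $\frac{2E}{E^2\cos^2\theta+\sin^2\theta}$ and $\frac{\pm E}{E^2\cos^2(\theta\mp\varphi)+\sin^2(\theta\pm\varphi)}$; clearing the (positive) common denominator, the sign of $(2F-G)'(\theta)$ equals the sign of a trigonometric polynomial that, after the identities $\cos^2 = \frac{1+\cos 2\theta}{2}$ etc., is affine in $\cos 2\theta$ and $\sin 2\theta$, i.e. of the form $a\cos 2\theta + b\sin 2\theta + c = \sqrt{a^2+b^2}\,\cos(2\theta+\Phi)+c$. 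Setting $\kappa = c/\sqrt{a^2+b^2}$ and $\Phi$ the phase yields the claimed form, \emph{provided} $\sqrt{a^2+b^2}>0$; I would check that the degenerate case $a=b=0$ cannot occur for a genuine hyperbolic singularity (it would force $X$ proportional to a rotation field), and note $\kappa$ need not have a definite sign — that is precisely what distinguishes the three cases of Theorem~\ref{T:thmcas} in the next step.

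The main obstacle is the bookkeeping in the index $+1$ computation: one must handle the focus and node normal forms simultaneously (they differ by the sign twist $\theta-\varphi$ versus $\theta+\varphi$ in the first coordinate), track the $E>1$ constraint in \textbf{C1} and the range of $\varphi$ in \textbf{C2}, and make sure the common denominator one clears is genuinely positive so that no sign is lost. Once the expression is reduced to $a\cos 2\theta+b\sin 2\theta+c$ the conclusion is purely formal, so the real work is the honest—but routine—trigonometric simplification, which I would organize by first treating \textbf{C1} and then indicating the sign changes needed for \textbf{C2}.
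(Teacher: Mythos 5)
Your overall strategy is the same as the paper's: compute $F'(\theta)=\frac{E}{E^2\cos^2\theta+\sin^2\theta}$, compute $G'$ from the normal forms of Lemma~\ref{suit} (with $G=F(\cdot+\varphi)$ in the focus case and $G'(\theta)=\frac{E\cos 2\varphi}{E^2\cos^2(\theta-\varphi)+\sin^2(\theta+\varphi)}$ in the node and saddle cases), dispose of the saddle via $\cos 2\varphi<0\Rightarrow G'<0$, and for index $+1$ clear the positive denominators to reduce the sign of $2F'-G'$ to that of an expression $a\cos 2\theta+b\sin 2\theta+c$. One small correction to your bookkeeping: in the node case the numerator of $G'$ is $E\cos 2\varphi$, not $E$; it is still positive there since $\varphi\in(-\pi/4,\pi/4)\pmod\pi$, so the reduction goes through.

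There is, however, one genuine gap: the proposition asserts $\kappa>0$, i.e.\ in your notation $c>0$, and you not only omit this verification but assert that ``$\kappa$ need not have a definite sign --- that is precisely what distinguishes the three cases of Theorem~\ref{T:thmcas}.'' That is not what distinguishes the cases: case~\ref{PT:3} arises from the index~$-1$ (saddle) branch where there is no $\kappa$ at all, and cases~\ref{PT:1} and~\ref{PT:2} are separated by $\kappa\ge 1$ versus $\kappa\in(0,1)$ (together with the direction $\alpha$ of $Y$), with $\kappa>0$ always. The positivity of $c$ must be checked explicitly and does hold: in the focus case the constant term is a positive multiple of $E^2+1$ (using $E>1$ to fix the overall sign after factoring out $E^2-1$), and in the node case it is a positive multiple of $(E^2+1)(2-\cos 2\varphi)$. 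Without $\kappa>0$ the subsequent fixed-point count in the proof of Theorem~\ref{T:thmcas} (where the dichotomy is $\kappa\ge 1$ versus $0<\kappa<1$) would not be justified, so this is not a cosmetic omission. Your check that $\sqrt{a^2+b^2}\neq 0$ is the right instinct and should be kept alongside it.
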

The proof of the proposition can be found in Appendix \ref{Appendix:proof_of_Proposition}. 

Before concluding the proof of Theorem \ref{T:thmcas}, let us emphasize the following two properties of $F$ and $G$.
\begin{description}
\item[P1] $\forall\theta\in (-\pi,\pi)$, $F(-\theta)=-F(\theta)$ and $G(-\theta)=-G(\theta)$;
\item[P2] $\forall\theta\in [-\pi,0)$, $F(\theta+\pi)-F(\theta)=\pi$ and $G(\theta+\pi)-G(\theta)=\pi$.
\end{description}

\begin{proof}[Proof of Theorem \ref{T:thmcas} when $X$ has a singularity of index 1.]

Let $\kappa>0$ and $\Phi\in [-\pi,\pi)$ be as in Proposition \ref{P:SigneDer}.

First consider the case where $\kappa\geq1$. The derivative $(2F-G)'$ is then always positive, except possibly at two  points in the case $\kappa=1$. Hence $2F-G$ is a bijection between $[-\pi,\pi)$ and its image. We claim that $2F-G\pmod{2\pi}$ is a bijection between $[-\pi,\pi)$ and $[0,2\pi)$. Equivalently we have to show that the image of $2F-G$ is an interval of length $2\pi$, which is immediate by application of $\mathbf{P2}$.

Thus we get the uniqueness in $[-\pi,\pi)$ of the solution $\theta$ of $2F(\theta)-G(\theta)=\alpha \pmod{2\pi}$. 
When $(2F-G)'(\theta)>0$ we deduce the repulsiveness from Lemma \ref{lemtT}. 
In the case $\kappa=1$ the set $\mathcal{E}_X$ is made of a single line through the origin, corresponding to the two values of  $\alpha$ for which $(2F-G)'(\theta)=0$ and  $2F(\theta)-G(\theta)=\alpha \pmod{2\pi}$. 

The case $\kappa \in (0,1)$ requires a further study of $2F-G$.
From Proposition \ref{P:SigneDer}, we know that $(2F-G)'(\theta)$ has the same sign as $\cos(2\theta+\Phi)+\kappa$. Let $\theta_0\in [0,\pi)$ be such that $\cos(2\theta_0+\Phi)+\kappa=0$ and $-2\sin(2\theta_0+\Phi)>0$, and let $\theta_1\in [\theta_0,\theta_0+\pi)$ be such that $\cos(2\theta_1+\Phi)+\kappa=0$ and $-2\sin(2\theta_1+\Phi)<0$. Then $\cos(2\theta+\Phi)+\kappa>0$ on $(\theta_0,\theta_1)$ and $\cos(2\theta+\Phi)+\kappa<0$ on $(\theta_1,\pi+\theta_0)$.
Since  $(2F-G)'$  is $\pi$-periodic, we let $x=\theta_1-\theta_0 \in (0,\pi)$ and up to replacing $\theta$ by $\theta-\theta_0$ (which corresponds to an orthonormal change of coordinates), we can assume that $(2F-G)'$ is positive on $(0,x)$ and negative on $(x,\pi)$. (See Figure~\ref{Fig:QualBehavior}.)
 
We are interested in characterizing the solutions of $2F-G=\alpha\pmod{2\pi}$ or, equivalently,
\[(2F-G)-(2F-G)(-\pi)=\alpha-(2F-G)(-\pi)\pmod{2\pi}.\]
Thus we can focus on the case $2F-G=\beta\pmod{2\pi}$, for some $\beta\in [-\pi,\pi)$ and $(2F-G)(-\pi)=0$.

Since $(2F-G)(0)=\pi$, and since $(2F-G)'$ is negative on $(x-\pi,0)$, there exists $y\in (0,x)$ such that $2F-G$ is increasing on $(-\pi,y-\pi)$ and $(2F-G)(y)=\pi$.
Moreover 
\[
\max_{\theta\in[0,\pi]} (2F-G)(\theta) \leq 2 \max_{\theta\in[-\pi,0]} F(\theta)-\min_{\theta\in[-\pi,0]}G(\theta).
\]
Since $F$ and $G$ are increasing, $\max_{\theta\in[-\pi,0]}F(\theta)=\pi$ and $\min_{\theta\in[-\pi,0]}G(\theta)=0$, so that \[\max_{\theta\in[0,\pi]} (2F-G)(\theta) \leq 2\pi.\]

Hence we know the behavior of the function $2F-G$, that is,
\begin{itemize}
\item $2F-G$ is increasing on $(-\pi, x-\pi)$, $(2F-G)(-\pi)=0$, $(2F-G)(x-\pi)\leq 2\pi$;
\item $2F-G$ is decreasing on $(x-\pi, 0)$, $(2F-G)(0)=\pi$;
\item $2F-G$ is increasing on $(0, x)$, and $(2F-G)(y)=2\pi$, then $(2F-G)(\theta)\leq 3\pi$ when $\theta \in [y,x]$;
\item $2F-G$ is decreasing on $[x, 2\pi]$, $(2F-G)(2\pi)=2\pi$.
\end{itemize}

\begin{figure}
\begin{center}
\begin{tikzpicture}[scale=1]

\tikzstyle{cadre}=[thin]
\tikzstyle{fleche}=[->,>=latex,thin]
\tikzstyle{nondefini}=[lightgray]

\def\Lrg{0.8}
\def\HtX{1}
\def\HtY{0.5}

\def\lignex{-0.5*\HtX}
\def\lignef{-1.5*\HtX}
\def\separateur{-0.5*\Lrg}

\def\gauche{-3*\Lrg}
\def\droite{12.5*\Lrg}

\def\haut{0.5*\HtX}
\def\bas{-2.5*\HtX-4*\HtY}

\node at (-1.7*\Lrg,0) {$\theta$};
\node at (0*\Lrg,0) {$-\pi$};
\node at (2*\Lrg,0) {$y-\pi$};
\node at (4*\Lrg,0) {$x-\pi$};
\node at (6*\Lrg,0) {$0$};
\node at (8*\Lrg,0) {$y$};
\node at (10*\Lrg,0) {$x$};
\node at (12*\Lrg,0) {$\pi$};

\node at (-1.7*\Lrg,-1*\HtX) {$(2F-G)'$};
\node at (0*\Lrg,-1*\HtX) {0};
\node at (2*\Lrg,-1*\HtX) {$+$};
\node at (4*\Lrg,-1*\HtX) {$0$};
\node at (5*\Lrg,-1*\HtX) {$-$};
\node at (6*\Lrg,-1*\HtX) {$0$};
\node at (8*\Lrg,-1*\HtX) {$+$};
\node at (10*\Lrg,-1*\HtX) {$0$};
\node at (11*\Lrg,-1*\HtX) {$-$};
\node at (12*\Lrg,-1*\HtX) {0};

\node  at (-1.7*\Lrg,{-2*\HtX+(-1)*\HtY}) {$2F-G$};
\node  at (-1.8*\Lrg,{-2*\HtX+(-2)*\HtY}) {$\pmod{2\pi}$};
\node (f1) at (0*\Lrg,{-2*\HtX+(-4)*\HtY}) {$0$};
\node (f2) at (2*\Lrg,{-2*\HtX+(-2)*\HtY}) {$\pi$};
\node (f3) at (4*\Lrg,{-2*\HtX+(-1)*\HtY}) {$<2\pi$};
\node (f4) at (6*\Lrg,{-2*\HtX+(-2)*\HtY}) {$\pi$};
\node(f5) at (8*\Lrg,{-2*\HtX+(0)*\HtY}) {$2\pi$};
\node(f6) at (8*\Lrg,{-2*\HtX+(-4)*\HtY}) {$0$};
\node (f7) at (10*\Lrg,{-2*\HtX+(-3)*\HtY}) {$<\pi$};
\node (f8) at (12*\Lrg,{-2*\HtX+(-4)*\HtY}) {$0$};

\draw[fleche] (f1) -- (f2);
\draw[fleche] (f2) -- (f3);
\draw[fleche] (f3) -- (f4);
\draw[fleche] (f4) -- (f5);
\draw[fleche] (f7) -- (f8);
\draw[fleche] (f6) -- (f7);

\draw[cadre] (\separateur,\haut) -- (\separateur,\bas);
\draw[cadre] (\gauche,\haut) rectangle  (\droite,\bas);
\draw[cadre] (\gauche,\lignex) -- (\droite,\lignex);
\draw[cadre] (\gauche,\lignef) -- (\droite,\lignef);
\end{tikzpicture}
\caption{Qualitative behavior of $2F-G$.\label{Fig:QualBehavior}}

\end{center}
\end{figure}

We can see that if $\alpha \not\in  [0,(2F-G)(x)] \cup [\pi,(2F-G)(-\pi+x)]+2k\pi$, $k\in \mathbb{Z}$, there is a unique repulsive solution.
If, instead, $\alpha \in  (0,(2F-G)(x)) \cup (\pi,(2F-G)(-\pi+x))+2k\pi$, $k\in \mathbb{Z}$, then there are three solutions, two repulsive and one attractive. Moreover, either the three solutions are all contained in $(y-\pi,y)$, with $(2F-G)'>0$ on the first and third solutions, or they all are in $[0,2\pi)\backslash [y-\pi,y]$ with $(2F-G)'>0$ on the first and second solutions (see Figure~\ref{Fig:QualBehavior}), which corresponds to the case 2 of Theorem \ref{T:thmcas}. Notice that the values of $\alpha$ which are not covered by this discussion are $0$, $(2F-G)(x)$, $\pi$, $(2F-G)(-\pi+x)$, which correspond to 
an exceptional set $\mathcal{E}_X$ made of two lines. 
\end{proof}

\begin{proof}[Proof of Theorem \ref{T:thmcas} when $X$ has a singularity of index $-1$.]
In this case we have that $G'<0$ and $F'>0$ on $[-\pi ,\pi)$. So $2F-G$, as a function from  $[-\pi ,\pi)$ to $\mathbb{R}$ is increasing and has total variation $6\pi$.  Hence there exists $y\in(-\pi,0)$ such that $(2F-G)-(2F-G)(-\pi)$ is a bijection from $(-\pi,y)$ onto $(0,2\pi)$; it exists $x\in(y,\pi)$ such that $(2F-G)-(2F-G)(-\pi)$ is a bijection from $(y,x)$ onto $(2\pi,4\pi)$; and again $(2F-G)-(2F-G)(-\pi)$ is a bijection from $(x,\pi)$ onto $(4\pi,6\pi)$.
Hence we have found that there are three solutions to (\ref{eqEgal}).

Since $(2F-G)(\theta+\pi)=(2F-G)(\theta)+\pi\pmod{2\pi}$, we know that each half-line where the line field is orthogonal to the line of the position is in the opposite direction to one of the solutions of equation (\ref{eqEgal}). By monotonicity of $2F-G$, we then conclude that the three fixed points cannot be in the same half-plane.
\end{proof}

Theorem \ref{T:thmcas} motivates the following definition.
\begin{defn}
A linear hyperbolic proto-line-field $L$ is said to be \emph{\superhyper} if for every $\theta_0\in \R$ such that $\phi_L(\theta_0)=\theta_0\pmod{\pi}$, one has $\frac{d\phi_L}{d\theta}(\theta_0)\neq 1$.
\end{defn}

\section{Linearization, blow-up and proof of Theorem~\ref{T:MainTheorem}}
\label{Section:Linearization}

The goal of this section is to prove the topological equivalence of a \superhyper\ proto-line-field at a hyperbolic singularity and its linearization. 
As a direct consequence, we get a proof of \MT.

\subsection{Blow-up}
\label{SubSection:Blowup}

Proposition \ref{P:blowup} below is the main technical step in the construction  of the topological equivalence.
It provides a blow up of a hyperbolic singularity of a proto-line-field.
The blow up sends the singularity into a line and allows to describe locally the line field by means of a vector field on a strip containing such line.

In what follows set $\mathrm{Pol}$ to be the Riemannian metric on $\R^+\times \R$ defined  by $\mathrm{Pol}(r,\theta)=dr^2+r^2d\theta^2$ and
recall that a Riemannian metric can always be diagonalized at a point by a suitable choice of coordinates.

\begin{prop}\label{P:blowup}
Let $L=(X,Y)$ be a proto-line-field on $(M,g)$ with a hyperbolic singularity at $p\in M$. Fix a system of coordinates $(x,y)$ such that $p=(0,0)$, $g(0,0)=\mathrm{id}$ and assume that $(x,y)$ defines a diffeomorphism between a neighborhood of $p$ and the ball of center the origin and radius $\delta$, for some $\delta>0$ such that $p$ is the only singularity of $L$ on the ball.
Assume that $Y(0,0)\not =0$ and consider the linear proto-line-field $\bar{L}=(DX(0,0),Y(0,0))$.

For every $(r,\theta)\in (0,\delta)\times \R$ 
let $\phi_L(r,\theta)$ and  $\phi_{\bar{L}}(\theta)$ in $\mathbb{R}\slash \pi \mathbb{Z}$ be defined by
$$
\phi_L(r,\theta)=
\angle_{g}
\left[
	\begin{pmatrix}
	1 \\ 0
	\end{pmatrix},B(L(r\cos\theta,r\sin\theta))
\right]
\text{ and } 
\phi_{\bar{L}}(\theta)=
\angle_{\mathrm{Eucl}}
\left[
	\begin{pmatrix}
	1 \\ 0
	\end{pmatrix},B(\bar{L}(r\cos\theta,r\sin\theta))
\right].
$$
Then there exists a $C^1$ function $\widetilde{\phi}_L:(-\delta,\delta)\times \R \rightarrow \R$ such that for every $(r,\theta)\in (0,\delta)\times \R$
\begin{align*}
&\phi_L(r,\theta)=\widetilde{\phi}_L(r,\theta)\pmod{\pi},
\\
&\widetilde{\phi}_L(-r,\theta)=\widetilde{\phi}_L(r,\theta)
\\
&\widetilde{\phi}_L(r,\theta+4\pi)=\widetilde{\phi}_L(r,\theta)\pmod{2\pi},
\end{align*}
 and such that the vector field $P$ on $(-\delta,\delta)\times (\R/4\pi \Z)$ given by 
\begin{equation}\label{E:defP}
P(r,\theta)=
\begin{pmatrix}
	r\cos(\widetilde{\phi}_L(r,\theta)-\theta)
	\\
	\sin(\widetilde{\phi}_L(r,\theta)-\theta)
\end{pmatrix}
\end{equation}
is $C^1$ and satisfies, for all $(r,\theta)\in (0,\delta)\times (\R/4\pi \Z)$,
\begin{equation}\label{E:defAngleP}
\angle_{\mathrm{Pol}}
\left[
	\begin{pmatrix}
	1\\0
	\end{pmatrix},P(r,\theta)
\right]
=
\angle_{g}
\left[
	\begin{pmatrix}
	\cos \theta \\ \sin \theta
	\end{pmatrix},B(L(r\cos\theta,r\sin\theta))
\right]
\pmod{\pi}.
\end{equation}
The singularities of $P$ in $(-\delta,\delta)\times (\R/4\pi \Z)$ are the points $(0,\theta_0)$ such that $\phi_{\bar{L}}(\theta_0)=\theta_0\pmod{\pi}$. Moreover,  if $\theta_0$ is a repulsive (respectively, attractive) fixed point of $\phi_{\bar{L}}$ then the singularity $(0,\theta_0)$ of $P$ is a saddle (respectively, a node). 
\end{prop}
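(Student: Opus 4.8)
The plan is to blow up the singularity in polar-type coordinates, turning the singular line field $B(L)$ near $p$ into an honest $C^1$ vector field $P$ on the strip $(-\delta,\delta)\times(\R/4\pi\Z)$ that is non-singular off $\{r=0\}$, and then to read the local phase portrait off the elementary singularities of $P$ lying on $\{r=0\}$.

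First, exploit hyperbolicity. Since $Y(0,0)\neq0$ it is $X$ that carries the hyperbolic singularity, so $A:=DX(0,0)$ is invertible. Writing $X(r\cos\theta,r\sin\theta)=r\,Z(r,\theta)$ with $Z(r,\theta):=\int_0^1 DX(sr\cos\theta,sr\sin\theta)\begin{psmallmatrix}\cos\theta\\\sin\theta\end{psmallmatrix}\,\mathrm{d}s$, one has that $Z$ is smooth on $(-\delta,\delta)\times\R$, $2\pi$-periodic in $\theta$, and $Z(0,\theta)=A\begin{psmallmatrix}\cos\theta\\\sin\theta\end{psmallmatrix}=\bar X(\cos\theta,\sin\theta)\neq0$; shrinking $\delta$ we may assume $Z$ and $Y$ are nowhere zero on the ball. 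Since the bisector only depends on directions, for $r\neq0$ we get $B(L(r\cos\theta,r\sin\theta))=B\bigl(Z(r,\theta),Y(r\cos\theta,r\sin\theta)\bigr)$, and this line, measured against $g(r\cos\theta,r\sin\theta)$, is a smooth section of $PT\R^2$ over all of $(-\delta,\delta)\times\R$ that on $\{r=0\}$ reduces (because $g(0,0)=\mathrm{id}$) to $B(\bar L(\cos\theta,\sin\theta))$ for the Euclidean metric, i.e.\ to the datum $\phi_{\bar L}$.

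Next, construct $\widetilde\phi_L$ and check the identities. On the simply connected set $(-\delta,\delta)\times\R$ lift the line-valued bisector angle to an $\R$-valued function, then fix the integration constant and symmetrise it in $r$ to get the claimed evenness; the $4\pi$-periodicity modulo $2\pi$ encodes the fact that $B(L)$ turns by $2\pi\,\mathrm{ind}_p(B(L))=\pm\pi$ along a small loop about $p$ (Proposition~\ref{P:indices}), which is what forces the double cover $\R/4\pi\Z$. For \eqref{E:defAngleP}, note that in the frame $\{\partial_r,\tfrac1r\partial_\theta\}$, which is $\mathrm{Pol}$-orthonormal for $r>0$, the vector \eqref{E:defP} equals $r\bigl(\cos(\widetilde\phi_L-\theta),\sin(\widetilde\phi_L-\theta)\bigr)$, so $\angle_{\mathrm{Pol}}\bigl[\begin{psmallmatrix}1\\0\end{psmallmatrix},P(r,\theta)\bigr]\equiv\widetilde\phi_L(r,\theta)-\theta\pmod\pi$, which one compares to the right-hand side. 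I expect the real difficulty of the whole proposition to sit here, namely in producing a $C^1$ function $\widetilde\phi_L$ with all the listed properties: the bisector angle already varies with $r$ at first order (through the quadratic terms of $X$ and the linear terms of $Y$ and $g$), so the naive even extension is merely continuous; one has to set up the blow-up carefully — passing from $g$ to the Euclidean metric of the chart up to topological equivalence (admissible because $g(0,0)=\mathrm{id}$) and normalising $X$ — to make $\widetilde\phi_L$, hence $P$, genuinely $C^1$ across $\{r=0\}$. Hyperbolicity ($A$ invertible) is exactly what keeps $Z$ nonzero and the data smooth off $\{r=0\}$, while the factor $r$ in the first component of \eqref{E:defP} is what makes the zero set of $P$ on $\{r=0\}$ discrete instead of all of $\{r=0\}$.

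Finally, locate and classify the zeros of $P$. For $r\neq0$, $P(r,\theta)=(r\cos\psi,\sin\psi)$ with $\psi=\widetilde\phi_L-\theta$, and $\cos\psi,\sin\psi$ are never both zero, so $P\neq0$; on $\{r=0\}$ the first component of $P$ vanishes identically and its second component vanishes exactly when $\phi_{\bar L}(\theta)\equiv\theta\pmod\pi$, i.e.\ exactly at the fixed points $\theta_0$ of $\phi_{\bar L}$. At such a $\theta_0$, using $\partial_\theta\widetilde\phi_L(0,\cdot)=\phi_{\bar L}'$, using $\partial_r\widetilde\phi_L(0,\theta_0)=0$ (from the $r$-symmetry together with the $C^1$ regularity), and using the factor $r$ in the first component, one gets
\[
DP(0,\theta_0)=\varepsilon\begin{pmatrix}1&0\\0&\phi_{\bar L}'(\theta_0)-1\end{pmatrix},\qquad \varepsilon\in\{\pm1\}.
\]
If $\theta_0$ is repulsive, $\phi_{\bar L}'(\theta_0)<1$, so the (real) eigenvalues $1$ and $\phi_{\bar L}'(\theta_0)-1$, taken up to the common sign $\varepsilon$, have opposite signs: $(0,\theta_0)$ is a hyperbolic saddle. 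If $\theta_0$ is attractive, $\phi_{\bar L}'(\theta_0)>1$, the two eigenvalues have the same sign: $(0,\theta_0)$ is a node. Since $P$ is $C^1$ with a hyperbolic zero there, the Hartman--Grobman theorem provides a local topological conjugacy between $P$ and its linearisation, which turns this into: $(0,\theta_0)$ is a saddle, respectively a node, of $P$ (the overall sign $\varepsilon$ merely reverses the flow and changes neither the integral curves nor the saddle/node type).
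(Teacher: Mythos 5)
Your overall strategy --- lift the bisector angle on the punctured disc, extend it across $\{r=0\}$, reflect evenly in $r$, and classify the zeros of $P$ on $\{r=0\}$ through the differential $DP(0,\theta_0)$, whose eigenvalues are $\pm 1$ and $\pm(\phi_{\bar L}'(\theta_0)-1)$ --- is the same as the paper's. But there is a genuine gap exactly at the point you yourself flag as ``the real difficulty'': you never prove that $\widetilde\phi_L$, and hence $P$, is $C^1$ across $\{r=0\}$. The paper does this in a dedicated lemma (Lemma~\ref{L:blowup} in the appendix), which establishes the three limits $\phi_L(r,\theta)\to\phi_{\bar L}(\theta)$, $\partial_\theta\phi_L(r,\theta)\to\phi_{\bar L}'(\theta)$ and $\partial_r\phi_L(r,\theta)\to 0$ as $r\to 0^+$ by expanding $g(e_i,X)$ and $g(e_i,Y)$ in $r$ in a $g$-orthonormal frame; the third limit is precisely what makes the even reflection $C^1$ rather than merely continuous with a corner in $r$. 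Your proposal replaces this with the remark that one must ``set up the blow-up carefully'' and with a suggestion to pass to the Euclidean metric ``up to topological equivalence''. Neither suffices: topological equivalence cannot deliver the $C^1$ regularity of $P$ that the statement asserts and that is used downstream --- the linearization $DP(0,\theta_0)$ and the saddle/node dichotomy require $P$ to be differentiable at $(0,\theta_0)$, hence require the one-sided $r$-derivatives of the even extension to match, i.e.\ $\partial_r\widetilde\phi_L(0^+,\theta)=0$.

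Note, moreover, that your own heuristic --- that the bisector angle varies with $r$ at first order through the quadratic terms of $X$ and the linear terms of $Y$ and $g$, so that the naive even extension is only continuous --- stands in direct tension with the limit $\partial_r\phi_L\to 0$ claimed in Lemma~\ref{L:blowup}, which the proposition needs. You cannot leave that tension unresolved: either you establish the vanishing of the limit of $\partial_r\phi_L$ (which is what the paper's appendix is devoted to), or the even reflection fails to be $C^1$ and the differentiable classification of the zeros of $P$ collapses. Relatedly, your derivation of $\partial_r\widetilde\phi_L(0,\theta_0)=0$ ``from the $r$-symmetry together with the $C^1$ regularity'' is circular as written, since that regularity is exactly the unproven step (though this particular entry is harmless: $DP(0,\theta_0)$ is triangular, so the eigenvalues, and hence the saddle/node type, do not depend on it). The remaining parts of your argument --- the verification of \eqref{E:defAngleP} in the frame $\{\partial_r,\tfrac1r\partial_\theta\}$, the identification of the zeros of $P$ with the fixed points of $\phi_{\bar L}$, and the repulsive/saddle, attractive/node dictionary --- are correct and agree with the paper.
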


\begin{proof}
Since $\delta>0$ has been chosen small enough so that the only singularity of $L$ in $\{x^2+y^2\leq \delta^2\}$ is in $(0,0)$, then $\phi_L :(0,\delta)\times  \R\rightarrow \R/\pi\Z$ can be lifted as a smooth function $\widetilde{\phi}_L:(0,\delta)\times \R\rightarrow \R$.

Lemma \ref{L:blowup}, found in appendix \ref{Appendix:Extension_of_the direction}, states that the limits
$$
\phi_L(r,\theta) \underset{r\rightarrow 0}{\longrightarrow} \phi_{\bar{L}}(\theta),
\quad
\frac{\partial\phi_L}{\partial\theta}(r,\theta) \underset{r\rightarrow 0}{\longrightarrow} \frac{d\phi_{\bar{L}}}{d\theta}(\theta),\text{ and }\quad
\frac{\partial\phi_L}{\partial r}(r,\theta) \underset{r\rightarrow 0}{\longrightarrow} 0,
$$
hold true, thus proving that $\widetilde{\phi}_L$ admits a $C^1$ extension on $[0,\delta)\times \R$.
We can then symmetrically extend $\widetilde{\phi}_L$ onto  $(-\delta,\delta)\times \R$ by setting $\widetilde{\phi}_L(r,\theta)=\widetilde{\phi}_L(-r,\theta)$ for any $r\in (-\delta,0)$. By construction, $\widetilde{\phi}_L$ is $C^1$.

From this construction we deduce that $P$, defined as in \eqref{E:defP}, is $C^1$ on $(-\delta,\delta)\times (\R/4\pi \Z)$ and that it vanishes exclusively at the points $(0,\theta_0)$, where $\phi_{\bar{L}}(\theta_0)=\theta_0\pmod{\pi}$. Its definition further implies relation \eqref{E:defAngleP}.

The differential of $P$ at a point $(r,\theta)\in (-\delta,\delta)\times (\R/4\pi \Z)$ is given by 
\[
DP(r,\theta)=
\begin{pmatrix}
\cos\left(\widetilde{\phi}_L-\theta\right)+r \frac{\partial\widetilde{\phi}_L}{\partial r}  \sin\left(\widetilde{\phi}_L-\theta\right) 
&
r\left(  \frac{\partial\widetilde{\phi}_L}{\partial\theta} -1\right)\sin\left(\widetilde{\phi}_L-\theta\right)
\\
\frac{\partial\widetilde{\phi}_L}{\partial r}  \cos\left(\widetilde{\phi}_L-\theta\right) 
&
\left(  \frac{\partial\widetilde{\phi}_L}{\partial\theta} -1\right) \cos\left(\widetilde{\phi}_L-\theta\right)
\end{pmatrix}.
\]
Thus, at a singularity $(0,\theta_0)$, where $\phi_{\bar{L}}(\theta_0)=\theta_0\pmod{\pi}$, the differential of $P$  is given by
\[
DP(0,\theta_0)=
\cos\left(\widetilde{\phi}_L(0,\theta_0)-\theta_0\right)
\begin{pmatrix}
1 & 0
\\
0
&
 \frac{d\widetilde{\phi}_{\bar{L}}}{d\theta} -1
\end{pmatrix}.
\]
Since  $\phi_{\bar{L}}(\theta_0)=\theta_0\pmod{\pi}$, moreover, we have $\cos\left(\widetilde{\phi}_L(0,\theta_0)-\theta_0\right)=\pm 1$. 
Therefore,  if $\theta_0$ is attractive then $\frac{\partial\phi_{\bar{L}}}{\partial\theta} >1$ and the singularity is a node, while  if $\theta_0$ is repulsive then $\frac{\partial\phi_{\bar{L}}}{\partial\theta} >1$ and the singularity is a saddle.
\end{proof}

The proto-line-field $\bar{L}$ in Proposition \ref{P:blowup} plays the role of the linearization of $L$. This motivates the following definition.
\begin{defn}\label{d:linearization}
Let $L=(X,Y)$ be a proto-line-field on $(M,g)$ with a singularity at $p\in M$ such that $Y(p)\neq0$. Fix a system of coordinates $(x,y)$ such that $p=(0,0)$, $g(0,0)=\mathrm{id}$. Then we call \emph{linearization of $L$ at $p$} the 
 linear proto-line-field $\bar{L}=(DX(0,0),Y(0,0))$.
\end{defn}

Notice that the condition $g(0,0)=\mathrm{id}$ defines uniquely $\bar{L}$ up to an orthogonal transformation of $\R^2$. In particular, the fact that $\bar{L}$ satisfies one of the properties \ref{PT:1}, \ref{PT:2} or \ref{PT:3} of Theorem \ref{T:thmcas} depends only on $L$.

\subsection{Proof of \MT}
\label{SubSection:Proof_of_MT}

The proof of \MT\  is based on the following proposition.
\begin{prop}\label{P:TopEqMain}
Let $L$ and $L'$ be two proto-line-fields on $(M,g)$ and $(M',g')$ respectively. Let $p$ and $p'$ be two hyperbolic singularities of $L$ and $L'$ respectively. 
Let $\bar{L}$ and $\bar{L}'$ be the corresponding linearizations and assume that one of the properties \ref{PT:1}, \ref{PT:2} or \ref{PT:3} of Theorem \ref{T:thmcas} is satisfied both by $\bar{L}$ and $\bar{L}'$. Then $L$ and $L'$ are topologically equivalent at $p$ and $p'$.
\end{prop}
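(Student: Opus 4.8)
The plan is to reduce the statement to a statement about the vector field $P$ produced by the blow-up Proposition~\ref{P:blowup}, and then to invoke the classical structural stability theory of hyperbolic singularities of planar vector fields. First I would apply Proposition~\ref{P:blowup} to both $L$ and $L'$, obtaining $C^1$ vector fields $P$ on $(-\delta,\delta)\times(\R/4\pi\Z)$ and $P'$ on $(-\delta',\delta')\times(\R/4\pi\Z)$. By \eqref{E:defAngleP}, the integral manifolds of $L$ on the punctured ball of radius $\delta$ correspond, under the polar-coordinate map $(r,\theta)\mapsto(r\cos\theta,r\sin\theta)$, to the integral curves of $P$ on $(0,\delta)\times(\R/4\pi\Z)$ (and similarly on the $r<0$ half by the symmetry $\widetilde\phi_L(-r,\theta)=\widetilde\phi_L(r,\theta)$, which matches the identification $(r,\theta)\sim(-r,\theta+\pi)$ implicit in polar coordinates). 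So it suffices to build a homeomorphism between a neighborhood of the singular fiber $\{0\}\times(\R/4\pi\Z)$ in the $P$-picture and the corresponding neighborhood in the $P'$-picture carrying integral curves of $P$ to those of $P'$, and moreover one that descends through the quotient by $(r,\theta)\sim(-r,\theta+\pi)$ so as to give an honest homeomorphism of punctured-ball-plus-point neighborhoods sending $p$ to $p'$.

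The second step is to analyze $P$ near the singular fiber. By Proposition~\ref{P:blowup} the zeros of $P$ on $\{0\}\times(\R/4\pi\Z)$ are exactly the fixed points $\theta_0$ of $\phi_{\bar L}$, and each is a hyperbolic saddle (repulsive $\theta_0$) or a hyperbolic node (attractive $\theta_0$); the differential computed in the proof shows the $r$-direction is always an eigendirection with positive eigenvalue $\cos(\widetilde\phi_L(0,\theta_0)-\theta_0)^2=1$, i.e. unstable, while the $\theta$-direction has eigenvalue $\phi_{\bar L}'(\theta_0)-1$. Now I would use the case hypothesis: since $\bar L$ satisfies one of \ref{PT:1}, \ref{PT:2}, \ref{PT:3} of Theorem~\ref{T:thmcas}, the list of fixed points of $\phi_{\bar L}$ on $\R/2\pi\Z$ — hence on $\R/4\pi\Z$ after doubling — together with their attractive/repulsive types is determined up to rotation: one repulsive point (case 1), or a repulsive--attractive--repulsive triple in one half (case 2), or three repulsive points not in a half-plane (case 3). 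In each case the phase portrait of $P$ in a small strip $(-\varepsilon,\varepsilon)\times(\R/4\pi\Z)$ is completely described: the fiber $\{r=0\}$ together with the stable separatrices of the saddles partition the strip into finitely many hyperbolic "sectors" (all sectors here are parabolic/hyperbolic of the standard saddle--node type because every $r\neq 0$ orbit flows away from $r=0$), and by the Hartman--Grobman theorem plus the standard theory of structural stability of planar flows near hyperbolic equilibria (e.g. as in Sotomayor or Palis--de Melo), two such strips with the same cyclic list of equilibria and the same types are topologically equivalent by a homeomorphism preserving the fiber $\{r=0\}$. Because the cyclic list and the types coincide for $P$ and $P'$ (they are read off from the case number alone), one gets the desired fiber-preserving equivalence $H$ between the two strips.

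The third step is to push $H$ back down. The symmetry $\widetilde\phi_L(-r,\theta)=\widetilde\phi_L(r,\theta)$ makes $P$ equivariant under $(r,\theta)\mapsto(-r,\theta+\pi)$ acting on $(-\delta,\delta)\times(\R/4\pi\Z)$, whose quotient is the punctured ball of $M$ with $p$ filled in (the circle $\{r=0\}/\!\!\sim$ of circumference $2\pi$ collapsing to the exceptional divisor that maps to $p$). I would choose $H$ to respect this $\Z/2$-symmetry — this is possible since the symmetry permutes equilibria of $P$ preserving their types and likewise for $P'$, so one can average/construct $H$ equivariantly, or simply build it on a fundamental domain $\theta\in[0,2\pi)$ and extend by equivariance — so that it descends to a homeomorphism $h$ of a neighborhood $V_p$ of $p$ onto a neighborhood $W_{p'}$ of $p'$ with $h(p)=p'$. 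Since integral manifolds of $L$ away from $p$ correspond exactly to integral curves of $P$ away from $r=0$, and $H$ takes the latter onto the integral curves of $P'$, the map $h$ takes integral manifolds of $L$ onto those of $L'$; this is precisely topological equivalence at $p$ and $p'$.

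The main obstacle I anticipate is the third step: making the blow-up equivalence $H$ equivariant under the glueing involution and checking that it genuinely descends to a \emph{homeomorphism} of neighborhoods in $M$ that is continuous at the point $p$ itself (the blow-down contracts a circle to a point, so continuity at $p$ has to be verified directly, e.g. by arranging that $H$ sends the fiber $\{|r|\le\varepsilon\}$-slab into an arbitrarily small slab and is close to the identity in $r$). A secondary technical point is to verify carefully that in every one of the three cases each connected component of $(-\varepsilon,\varepsilon)\times(\R/4\pi\Z)\setminus(\{r=0\}\cup\text{saddle separatrices})$ is a standard hyperbolic sector, so that the classical "same combinatorics $\Rightarrow$ topological equivalence" statement for planar hyperbolic configurations applies verbatim; this is where one actually uses that the $r$-eigenvalue is always unstable, ruling out elliptic sectors.
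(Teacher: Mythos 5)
Your overall strategy is the same as the paper's: blow up via Proposition~\ref{P:blowup}, observe that the case hypothesis alone determines the number and the saddle/node types of the zeros of $P$ on $\{0\}\times(\R/4\pi\Z)$, build a fiber-preserving homeomorphism of neighborhoods of the singular circles carrying integral curves of $P$ onto those of $P'$, and then descend to the punctured discs and extend by $p\mapsto p'$. The paper carries out the middle step by hand (Lemma~\ref{L:TopEqLift}): it cuts the annulus along the skeleton (zeros plus saddle separatrices, after shrinking so that there are no cycles or saddle connections) and matches cells that are either flow boxes or node sectors; you instead invoke the general ``same separatrix combinatorics implies topological equivalence'' theory, which is the same content.

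The one place where your description would not survive being written out is the descent, which you correctly flag as the main obstacle. The identification $(r,\theta)\sim(-r,\theta+\pi)$ is not the right one: on $(-\delta,\delta)\times(\R/4\pi\Z)$ that map generates a cyclic group of order four, not two, and, more importantly, the extension of $\widetilde{\phi}_L$ to $r<0$ in Proposition~\ref{P:blowup} is the formal symmetric extension $\widetilde{\phi}_L(-r,\theta)=\widetilde{\phi}_L(r,\theta)$ (so $(-r,\theta)$ records the data at the point $(|r|\cos\theta,|r|\sin\theta)$, not at $((-r)\cos\theta,(-r)\sin\theta)$); consequently $P$ is \emph{not} equivariant under $(r,\theta)\mapsto(-r,\theta+\pi)$, and a homeomorphism built to be equivariant under that map need not descend to the disc. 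The paper's resolution is simpler: the $r<0$ half serves only to give the zeros of $P$ two-sided neighborhoods, and for the descent one restricts to $r>0$, where $\psi(r,\theta)=(r\cos\theta,r\sin\theta)$ is an honest double cover of the punctured disc with deck transformation $T:(r,\theta)\mapsto(r,\theta+2\pi)$; one only needs $h\circ T=T\circ h$ (available because the line field spanned by $P$ is $2\pi$-periodic in $\theta$), and then $h$ pushes through the quotient, with continuity at $p$ coming for free since $h$ maps a neighborhood of $\{r=0\}$ into a neighborhood of $\{r=0\}$. A second small correction: the radial eigenvalue of $DP(0,\theta_0)$ is $\cos(\widetilde{\phi}_L(0,\theta_0)-\theta_0)=\pm1$, not necessarily $+1$, which is harmless here because only unoriented integral curves matter. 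With these adjustments your plan coincides with the paper's proof.
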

The proof of Proposition \ref{P:TopEqMain} is given in the next section. A first consequence of the proposition is the following corollary.
\begin{cor}
Let $L$ be a proto-line-field on $(M,g)$. Let $p$  be a hyperbolic singularity of $L$.
Let $\bar{L}$ be the corresponding linearization. If $\bar{L}$ is \superhyper\ then $L$ and $\bar{L}$ are topologically equivalent at $p$ and $0$.
\end{cor}

\MT\ can now be obtained by combining Theorem \ref{T:thmcas}, Proposition \ref{P:TopEqMain} and Thom's transversality theorem.
\begin{proof}[Proof of \MT]
As a consequence of Thom's transversality theorem,
generically  with respect to $(X,Y)$ in $\mathcal{W}^1(M)$,  every singularity of the proto-line-field $L=(X,Y)$ is hyperbolic and its linearization 
is \superhyper.
Notice that the Lemon (respectively, Monstar, Star) proto-line-field satisfies property \ref{PT:1} (respectively, 
\ref{PT:2}, 
\ref{PT:3})
of Theorem~\ref{T:thmcas}. It then follows from Proposition \ref{P:TopEqMain} that all singularities of a generic proto-line-field are Darbouxian.
\end{proof}

\subsection{Construction of the topological equivalence}
\label{SubSection:Construction_of_the_topological_equivalence}

In order to prove Proposition~\ref{P:TopEqMain}, let us first focus on the following
two lemmas, which yield conditions for the existence of homeomorphisms preserving integral manifolds of proto-line-fields around singularities.

\begin{lem}\label{L:TopEqLift}
Let $L$ and $L'$ be two proto-line-fields on $(M,g)$ and $(M',g')$ respectively. Let $p$ and $p'$ be two hyperbolic singularities of $L$ and $L'$ respectively. 
Let $\bar{L}$ and $\bar{L}'$ be the corresponding linearizations and assume that one of the properties \ref{PT:1}, \ref{PT:2} or \ref{PT:3} of Theorem~\ref{T:thmcas} is satisfied both by $\bar{L}$ and $\bar{L}'$. 
Let $\delta>0$ (respectively, $\delta'>0$) and $P$ (respectively, $P'$) be the vector field on $(-\delta,\delta)\times (\R/4\pi \Z)$ (respectively $(-\delta',\delta')\times (\R/4\pi \Z)$) introduced in Proposition~\ref{P:blowup}. 

Then there exist 
two neighborhoods $V$ and $V'$ of $\{0\}\times (\R/4\pi \Z)$, invariant under the  translation $T:(r,\theta)\mapsto (r,\theta+2\pi)$,
and an homeomorphism $h:V \rightarrow V'$ such that $h$ maps the integral lines of $P$ onto the integral lines of $P'$ and $h\circ T=T\circ h$.
\end{lem}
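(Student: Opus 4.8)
The plan is to reduce the problem to a statement about planar vector fields with hyperbolic singularities and to invoke a Hartman–Grobman-type local classification, taking care of the translation-equivariance by $T$. By Proposition~\ref{P:blowup}, the singularities of $P$ on $\{0\}\times(\R/4\pi\Z)$ are exactly the points $(0,\theta_0)$ with $\phi_{\bar L}(\theta_0)=\theta_0\pmod\pi$, and each such singularity is a saddle (if $\theta_0$ is repulsive) or a node (if $\theta_0$ is attractive); the analogous description holds for $P'$. Since $\bar L$ and $\bar L'$ both satisfy the \emph{same} one of the properties \ref{PT:1}, \ref{PT:2}, \ref{PT:3} of Theorem~\ref{T:thmcas}, the fixed-point data of $\phi_{\bar L}$ and of $\phi_{\bar L'}$ match combinatorially: the number of fixed points in a period, their cyclic order, and their attractive/repulsive types all coincide (one repulsive; or two repulsive and one attractive nested in a half-period; or three repulsive spread over the period). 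Consequently $P$ and $P'$ have the same number of singularities per period, with matching types and cyclic ordering along $\{r=0\}$.

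\textbf{Key steps.} First I would fix, for each of the two vector fields, a small horizontal strip $V_0=(-\eta,\eta)\times(\R/4\pi\Z)$ (resp. $V_0'$) on which the only singularities are the finitely many points on $\{r=0\}$. Second, around each singularity I would use a local model: a hyperbolic saddle or node in the plane is, after a homeomorphism, topologically equivalent to the linear model $\dot r=r,\ \dot\theta=-\theta$ (saddle) or $\dot r=r,\ \dot\theta=\theta$ (node) — here one may even use the explicit linear form of $DP(0,\theta_0)$ computed in the proof of Proposition~\ref{P:blowup}, which is already diagonal, so the only work is to pass from the nonlinear $P$ to its linear part, a standard $C^0$-linearization for hyperbolic singularities. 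Because the phase portraits of saddles and nodes are determined up to homeomorphism by their type, one gets local homeomorphisms $h_i$ matching the $i$-th singularity of $P$ with the $i$-th singularity of $P'$ (in the common cyclic order), each sending integral lines to integral lines. Third — and this is the gluing step — I would patch these local pieces together along the one-dimensional "corridors" of $\{r=0\}$ between consecutive singularities, where $P$ is nonvanishing and transverse structure is trivial: on each such arc both $P$ and $P'$ restrict to nowhere-zero vector fields on a strip, and a homeomorphism of strips sending orbits to orbits and agreeing with the already-built $h_i$ on the two ends exists by a routine flow-box / interpolation argument. Finally, to obtain $T$-equivariance, I would build $h$ first on one fundamental domain for $T$ of length $2\pi$ in the $\theta$-variable containing the correct number of singularities (recall the singularity pattern is $2\pi$-periodic even though $P$ is only naturally $4\pi$-periodic), match it up at the two boundary circles $\{\theta\}$ and $\{\theta+2\pi\}$ so that it is compatible with $T$, and then extend to all of $V$ by $h\circ T=T\circ h$; one must check the two copies of the boundary data agree, which holds because the singularity data and the local models were chosen $2\pi$-periodically.

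\textbf{Main obstacle.} The delicate point is not the existence of the local homeomorphisms (that is classical for hyperbolic singularities) but the \emph{compatibility of the patching with the translation $T$}: one must choose the local model homeomorphisms and the corridor interpolations consistently over a $2\pi$-period so that, after extending by equivariance, there is no mismatch at the seams. I expect to handle this by first constructing $h$ on the closed strip $[-\eta,\eta]\times[0,2\pi]$ with prescribed, $T$-matching behavior on $\theta=0$ and $\theta=2\pi$ (possible since $\phi_{\bar L}$, and hence the singularity configuration, is genuinely $2\pi$-periodic in the relevant quotient), and only then declaring $h$ on the rest of $V$ via $h\circ T = T\circ h$; the resulting map is well defined and continuous precisely because of this boundary matching, and it automatically takes integral lines of $P$ to those of $P'$ since it does so on the fundamental domain. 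The shrinking of $V$ and $V'$ to genuine neighborhoods of $\{0\}\times(\R/4\pi\Z)$ on which everything is defined is then immediate.
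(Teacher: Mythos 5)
Your strategy is sound and reaches the same conclusion, but it is organized differently from the paper's proof. You work outward from the singularities: $C^0$-linearization (Hartman--Grobman) at each hyperbolic zero of $P$, matched combinatorially between $P$ and $P'$ thanks to Theorem~\ref{T:thmcas}, then gluing along the arcs of $\{r=0\}$ between consecutive singularities, and finally a fundamental-domain argument for the $T$-equivariance. The paper instead works inward from a global decomposition: it introduces the \emph{skeleton} $S$ of $P$ (zeros plus all separatrices of saddles) and shows that each connected component of the complement is, together with part of its boundary, either a flow box (when no node lies on its $\{r=0\}$ edge) or a topological node basin (when one does); the homeomorphism is then assembled cell by cell, and the $2\pi$-periodicity of the construction gives $h\circ T=T\circ h$ in one line. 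The paper's route avoids any appeal to Hartman--Grobman and localizes all the gluing difficulty into the choice of the cells; your route makes the local analysis trivial but pushes the difficulty into the patching.

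Two points in your sketch deserve more care. First, your ``corridors'' between consecutive singularities are not flow boxes: each contains the invariant arc of $\{0\}\times(\R/4\pi\Z)$ joining the two singularities, which is a heteroclinic orbit with both ends at zeros of $P$, and nearby orbits are asymptotic to the endpoints. What you actually need to interpolate over are the hyperbolic and parabolic sectors bounded by the saddle separatrices --- which is exactly the cell decomposition the paper sets up, so your argument, once made precise, essentially reproduces it. Second, for either approach one must first shrink $\eta$ so that $P$ has no periodic orbit and no saddle connection away from $\{r=0\}$ inside the strip (otherwise the skeletons of $P$ and $P'$ need not be homeomorphic even when the singularity data match); the paper imposes this explicitly at the start of its proof, and you should too.
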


\begin{proof}
Choose $\eta>0$ small enough so that $P$ has no cycle nor integral curve with both ends at a saddle  in $(-\eta,0)\times (\R/4\pi\Z)$.
We are interested in studying the skeleton of $P$, {\it i.e.}, the union $S$ of the set of zeros of $P$ and integral curves in $(-\eta,\eta)\times (\R/4\pi\Z)$ 
that reach a saddle singularity of $P$ at one of its ends (or at both of them). 
The set $((-\eta,\eta)\times (\R/4\pi\Z))\backslash S$ has exactly twice as many connected components as $P$ has saddles in $(-\eta,\eta)\times (\R/4\pi\Z)$, and four times as many as $\phi_{\bar{L}}$ has repulsive fixed points in $\R/2\pi\Z$. (See Figure~\ref{Fig:AnnulusBlowUp}.)

Let $\{C_i\mid i\in I\}$ be the set of connected components of $((-\eta,\eta)\times (\R/4\pi\Z))\backslash S$. 

The border $\partial C$ of a cell $C$ is the union of a segment  of the type
$\{0\}\times [\theta_1,\theta_2]$, of an arc of  $\{\pm\eta \}\times (\R/4\pi\Z)$, and of
two integral curves $\gamma_1$, $\gamma_2$  of $P$ that join $(0,\theta_1)$ and $(0,\theta_2)$.

If there is no attractive fixed point of $\phi_{\bar{L}}$ in the interval $(\theta_1,\theta_2)$ then we can find an integral line $\gamma$ of $P$ that is arbitrarily close to  $\gamma_1\cup(\{0\}\times [\theta_1,\theta_2])\cup\gamma_2$ (see Figure~\ref{Fig:CC1}). Then we can assume that the vector field $P$ is transverse to $\{\pm\eta \}\times (\R/4\pi\Z)$ between 
$\gamma$ and $\gamma_1\cup(\{0\}\times [\theta_1,\theta_2])\cup\gamma_2$, and that it is topologically equivalent  on this subset to the parallel vector field 
$$
\begin{pmatrix}
1\\0
\end{pmatrix}
\text{ on }[0,1]\times [0,1].
$$

If there is an attractive fixed point of $\phi_{\bar{L}}$ in the interval $(\theta_1,\theta_2)$ then we can find $0<\eta_1<\eta$ so that $P$ is transverse to $\{\pm\eta_1 \}\times  (\R/4\pi\Z)$ (see Figure~\ref{Fig:CC2}) and $P$ is topologically equivalent on the intersection of the cell with $(-\eta_1,\eta_1)\times (\R/4\pi\Z)$ to 
$$
\begin{pmatrix}
x\\y
\end{pmatrix}
\text{ on }[0,1]\times [0,1].
$$

Since $L$ and $L'$ satisfy the same property   \ref{PT:1}, \ref{PT:2} or \ref{PT:3} of Theorem \ref{T:thmcas}, their skeletons are homeomorphic. Since the directions of $P$ and $P'$ are $2\pi$-periodic, the construction above  leads to a topological equivalence between $P$ and $P'$ as in the statement of the lemma.
\end{proof}

\begin{figure}[!htb]
\begin{center}
\begin{minipage}{0.45\linewidth}
\subfloat[		Case \ref{PT:1}\label{Fig:Blowup1} ]{\includegraphics[width=\linewidth]{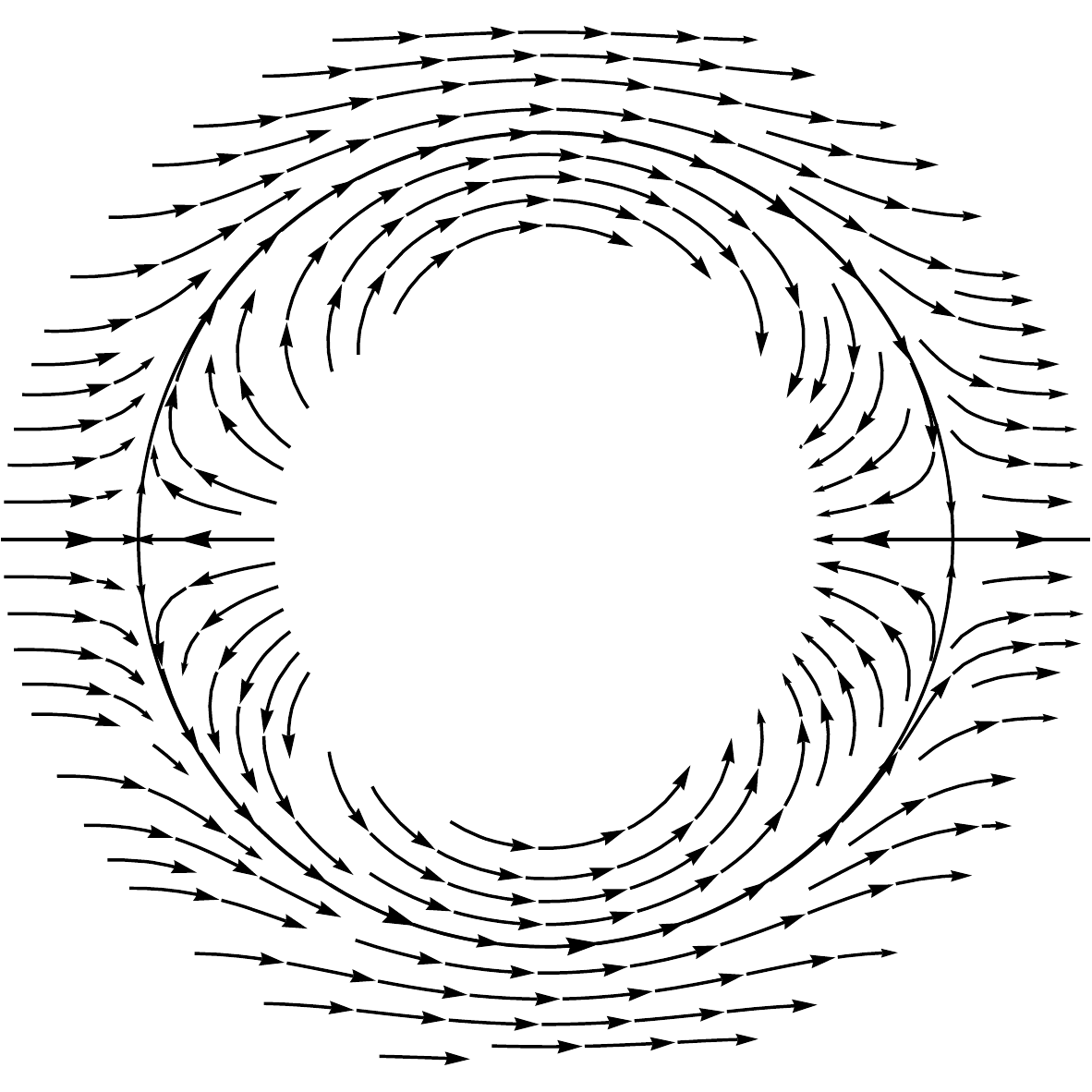}}
\end{minipage}
\hspace{0.05\linewidth}
	\begin{minipage}{0.45\linewidth}
\subfloat[Case \ref{PT:2}\label{Fig:Blowup2}  ]{\includegraphics[width=\linewidth]{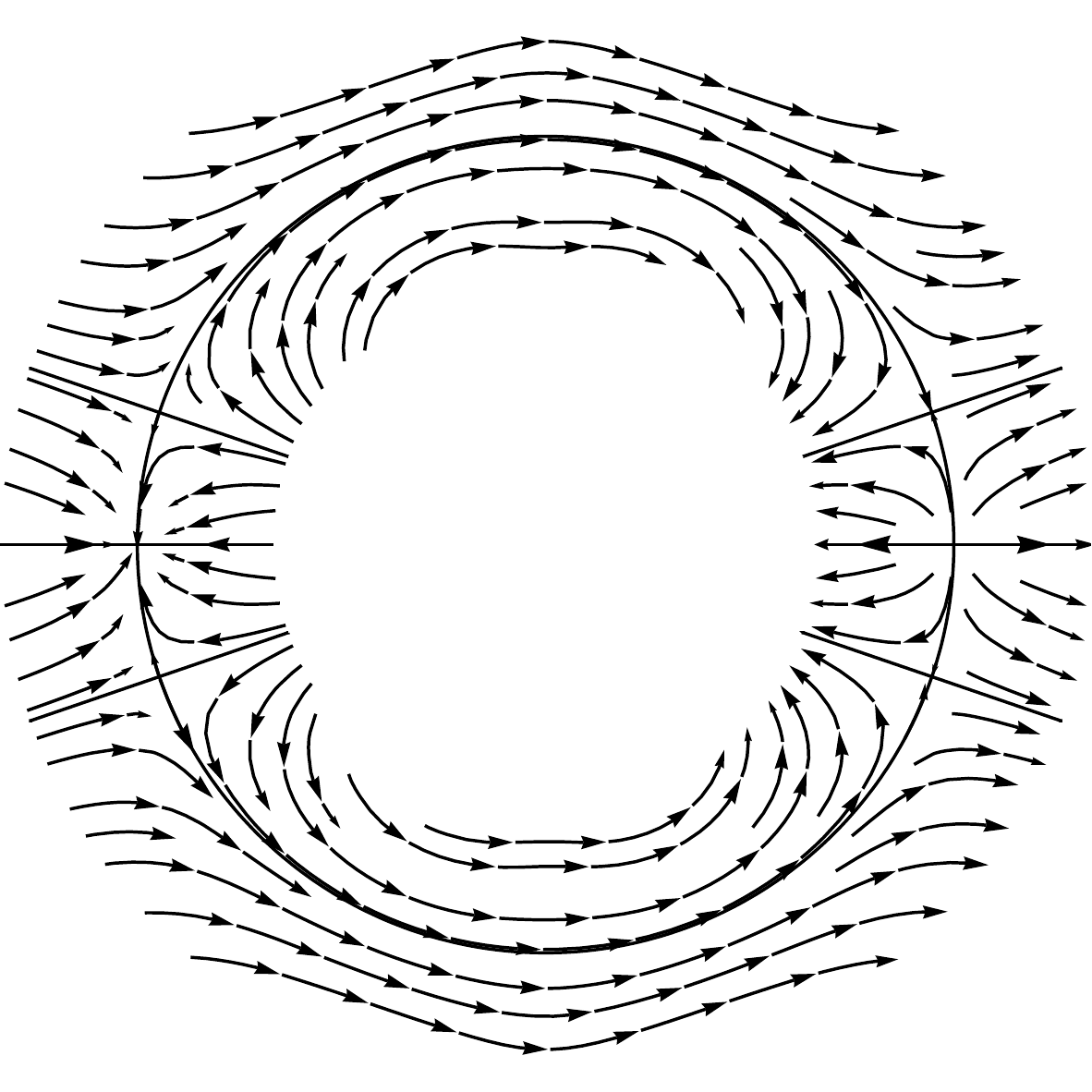}}
\end{minipage}	

\vspace{0.5\baselineskip}

\begin{minipage}{0.45\linewidth}
\subfloat[Case \ref{PT:3}\label{Fig:Blowup3} ]{\includegraphics[width=\linewidth]{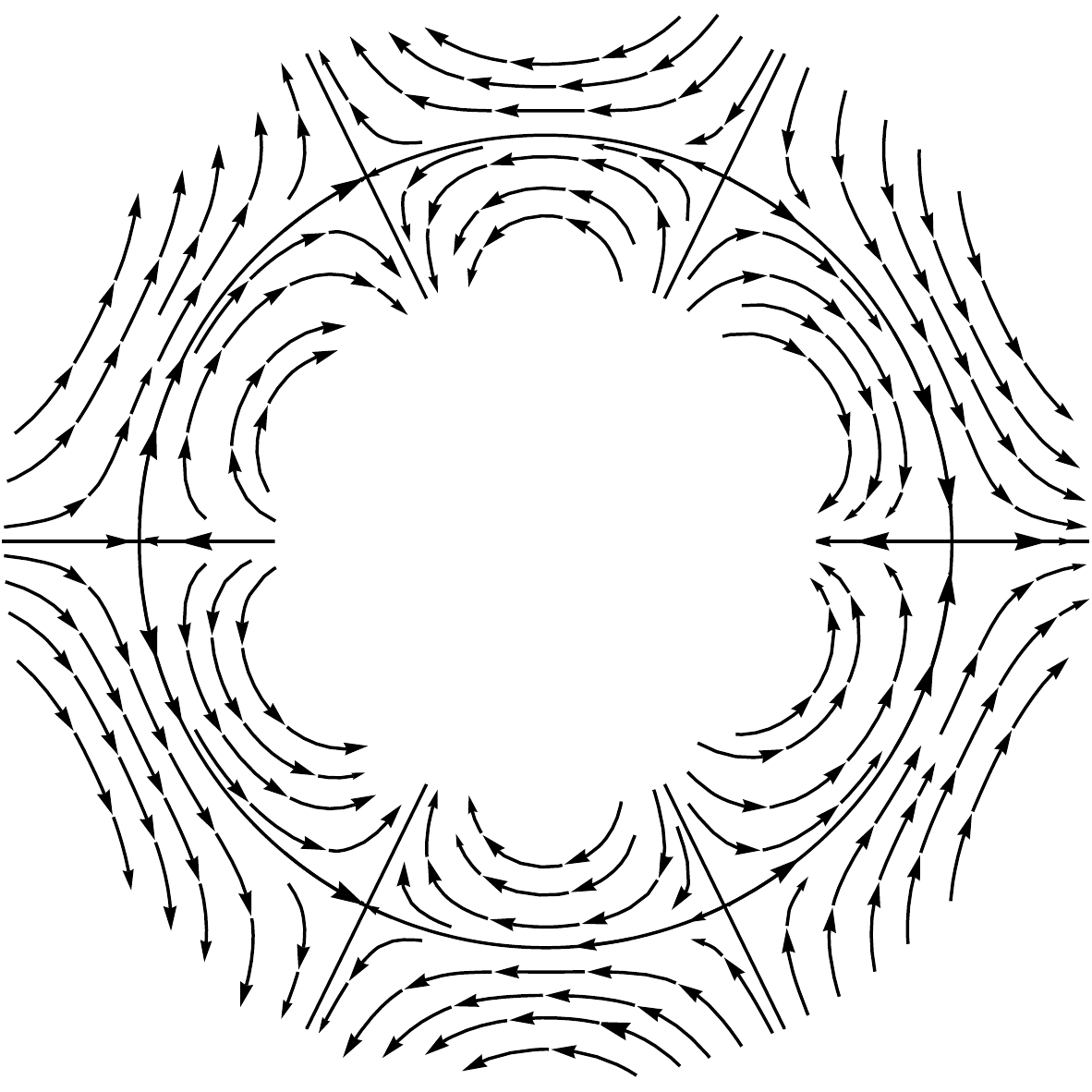}} 
\end{minipage}

\caption[Examples of $P$ in each case of Theorem \ref{T:thmcas}, where $(-\delta_0,\delta_0)\times (\R/4\pi\Z)$ is represented as an annulus. The figures \ref{Fig:Blowup1}, \ref{Fig:Blowup2} and  \ref{Fig:Blowup3} where respectively computed with and , , .]{Examples of $P$ in each case of Theorem \ref{T:thmcas}, where $(-\delta_0,\delta_0)\times (\R/4\pi\Z)$ is represented as an annulus. The figures \ref{Fig:Blowup1}, \ref{Fig:Blowup2} and  \ref{Fig:Blowup3} where respectively computed with $Y=\begin{pmatrix}
1\\0
\end{pmatrix}$
and
$X=\begin{pmatrix}
x\\10y/11
\end{pmatrix}$, 
$X=\begin{pmatrix}
x\\5y
\end{pmatrix}$, 
$X=\begin{pmatrix}
x\\-3y
\end{pmatrix}$, respectively.
\label{Fig:AnnulusBlowUp}
}
\end{center}
\end{figure}
\begin{figure}[!htb]
\begin{center}
         \subfloat[First case: there is no attractive fixed point of $\phi_{\bar{L}}$ between $\theta_1$ and $\theta_2$. The topological equivalence is defined on a subset of $C_i$ bounded by the curve $\gamma$.\label{Fig:CC1}]{
\begingroup%
  \makeatletter%
  \providecommand\color[2][]{%
    \errmessage{(Inkscape) Color is used for the text in Inkscape, but the package 'color.sty' is not loaded}%
    \renewcommand\color[2][]{}%
  }%
  \providecommand\transparent[1]{%
    \errmessage{(Inkscape) Transparency is used (non-zero) for the text in Inkscape, but the package 'transparent.sty' is not loaded}%
    \renewcommand\transparent[1]{}%
  }%
  \providecommand\rotatebox[2]{#2}%
  \ifx\svgwidth\undefined%
    \setlength{\unitlength}{0.3\textwidth}%
    \ifx\svgscale\undefined%
      \relax%
    \else%
      \setlength{\unitlength}{\unitlength * \real{\svgscale}}%
    \fi%
  \else%
    \setlength{\unitlength}{\svgwidth}%
  \fi%
  \global\let\svgwidth\undefined%
  \global\let\svgscale\undefined%
  \makeatother%
  \begin{picture}(1,2)%
    \put(0,0){\includegraphics[width=\unitlength,page=1]{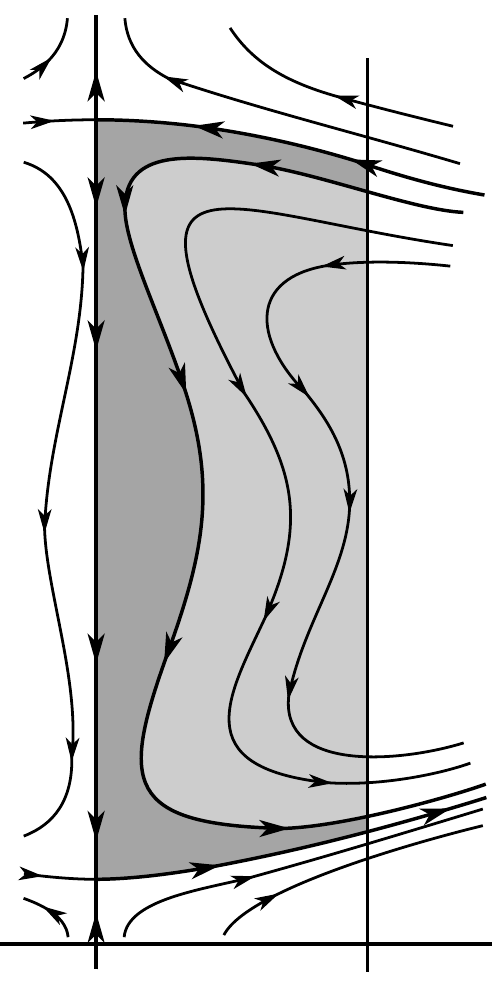}}%
    \put(0.3,0.235){\color[rgb]{0,0,0}\makebox(0,0)[lb]{$\gamma_1$}}%
    \put(0.45,0.85){\color[rgb]{0,0,0}\makebox(0,0)[lb]{$C_i$}}%
    \put(0.35,1){\color[rgb]{0,0,0}\makebox(0,0)[lb]{$\gamma$}}%
    \put(0.21,0.01){\color[rgb]{0,0,0}\makebox(0,0)[lb]{0}}%
    \put(-0.025,0.24){\color[rgb]{0,0,0}\makebox(0,0)[lb]{\colorbox{white}{\makebox[0.75cm][c]{$(0,\theta_1)$}}}}
    \put(-0.025,1.61){\color[rgb]{0,0,0}\makebox(0,0)[lb]{\colorbox{white}{\makebox[0.75cm][c]{$(0,\theta_2)$}}}}
    \put(0.76,0.01){\color[rgb]{0,0,0}\makebox(0,0)[lb]{$\eta$}}%
    \put(0.25,1.76611713){\color[rgb]{0,0,0}\makebox(0,0)[lb]{$\gamma_2$}}%
  \end{picture}%
\endgroup%
         }      
        \hspace{0.1\textwidth}
        \subfloat[Second case: there is an attractive fixed point $\theta_0$ of $\phi_{\bar{L}}$ between $\theta_1$ and $\theta_2$. For $\eta$ small enough the topological equivalence is defined on the entire cell $C_i$.\label{Fig:CC2}]{
\begingroup%
  \makeatletter%
  \providecommand\color[2][]{%
    \errmessage{(Inkscape) Color is used for the text in Inkscape, but the package 'color.sty' is not loaded}%
    \renewcommand\color[2][]{}%
  }%
  \providecommand\transparent[1]{%
    \errmessage{(Inkscape) Transparency is used (non-zero) for the text in Inkscape, but the package 'transparent.sty' is not loaded}%
    \renewcommand\transparent[1]{}%
  }%
  \providecommand\rotatebox[2]{#2}%
  \ifx\svgwidth\undefined%
    \setlength{\unitlength}{0.3\textwidth}%
    \ifx\svgscale\undefined%
      \relax%
    \else%
      \setlength{\unitlength}{\unitlength * \real{\svgscale}}%
    \fi%
  \else%
    \setlength{\unitlength}{\svgwidth}%
  \fi%
  \global\let\svgwidth\undefined%
  \global\let\svgscale\undefined%
  \makeatother%
  \begin{picture}(1,2)%
    \put(0,0){\includegraphics[width=\unitlength,page=1]{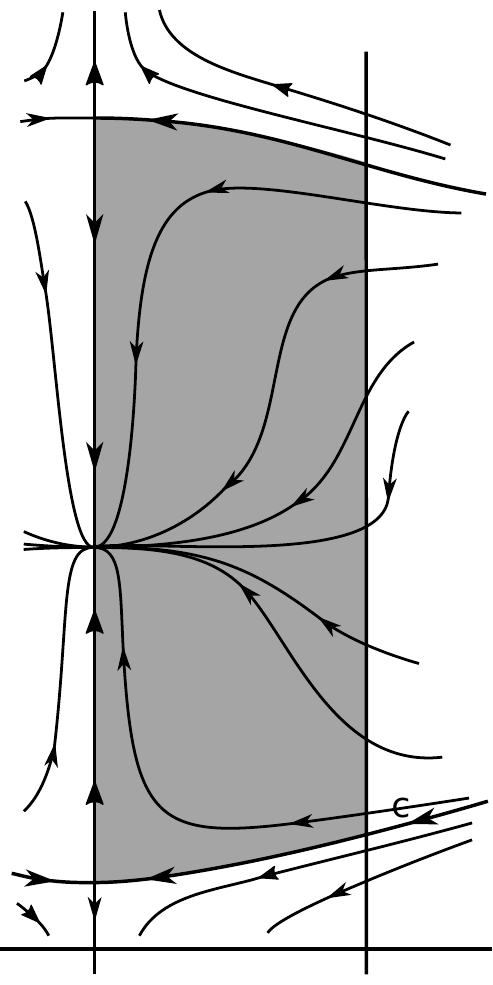}}%
    \put(0.76,0.0){\color[rgb]{0,0,0}\makebox(0,0)[lb]{$\eta$}}%
    \put(0.21951737,0.0){\color[rgb]{0,0,0}\makebox(0,0)[lb]{$0$}}%
    \put(0.25,0.25){\color[rgb]{0,0,0}\makebox(0,0)[lb]{$\gamma_1$}}%
    \put(0.4,0.5){\color[rgb]{0,0,0}\makebox(0,0)[lb]{$C_i$}}%
    \put(0.25,1.765){\color[rgb]{0,0,0}\makebox(0,0)[lb]{$\gamma_2$}}%
    \put(-0.03,1.61){\color[rgb]{0,0,0}\makebox(0,0)[lb]{\colorbox{white}{\makebox[0.75cm][c]{$(0,\theta_2)$}}}}%
    \put(-0.03,0.237){\color[rgb]{0,0,0}\makebox(0,0)[lb]{\colorbox{white}{\makebox[0.75cm][c]{$(0,\theta_1)$}}}}%
    \put(-0.07,0.75){\color[rgb]{0,0,0}\makebox(0,0)[lb]{\colorbox{white}{\makebox[0.75cm][c]{$(0,\theta_0)$}}}}%
  \end{picture}%
\endgroup%
} 
\caption{Representation of the two types of connected components of $((-\eta,\eta)\times (\R/4\pi\Z))\backslash S$.}
\end{center}
\end{figure}

\begin{lem}\label{L:TopEqPolar}
Let $L$ be a proto-line-field on $(M,g)$. Let $p$ be a hyperbolic singularity of $L$. 
Assume that the linearization $\bar{L}$  of $L$ at $p$ is \superhyper. Let $delta>0$, the system of coordinates $(x,y)$, and the vector field $P$  on $(-\delta,\delta)\times (\R/4\pi \Z)$ be defined as in Proposition \ref{P:blowup}. 
Then the application
$$
\begin{array}{rccl}
\psi:&(0,\delta) \times(\R/4\pi \Z)&\longrightarrow &\{0<x^2+y^2<\delta^2\}
\\
&(r,\theta)&\longmapsto&(x,y)=(r\cos\theta,r\sin\theta)
\end{array}
$$ 
is a local diffeomorphism that maps the integral lines of $P$ onto the integral manifolds of $L$.
\end{lem}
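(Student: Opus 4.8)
The plan is to recognize $\psi$ as (a lift to the $4\pi$-periodic double cover of) the ordinary polar-coordinate map, and to exploit that in polar coordinates the flat metric is precisely $\mathrm{Pol}$. I would begin with two elementary observations. First, the Jacobian determinant of $(r,\theta)\mapsto(r\cos\theta,r\sin\theta)$ is $r$, nonzero on $(0,\delta)\times(\R/4\pi\Z)$, so $\psi$ is a local diffeomorphism; moreover a one-line computation gives $\psi^{*}(\mathrm{Eucl})=dr^{2}+r^{2}d\theta^{2}=\mathrm{Pol}$, so $\psi$ is in fact a local isometry onto the punctured ball (indeed a two-to-one Riemannian covering). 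Second, by Proposition~\ref{P:blowup} the vector field $P$ has no zero with $r>0$, so it determines a genuine line field on $(0,\delta)\times(\R/4\pi\Z)$ whose integral curves foliate that set, just as the integral manifolds of $L$ foliate the punctured ball. Thus everything reduces to showing that $d\psi_{(r,\theta)}$ sends the line $\R\,P(r,\theta)$ onto $B(L(\psi(r,\theta)))$ for every $r\in(0,\delta)$.

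To see this I would compute the pushforward directly. From $d\psi_{(r,\theta)}(\partial_{r})=(\cos\theta,\sin\theta)$ and $d\psi_{(r,\theta)}(\partial_{\theta})=r(-\sin\theta,\cos\theta)$, plugging in $P(r,\theta)$ from \eqref{E:defP} and using the angle-addition formulas one gets
\[
d\psi_{(r,\theta)}(P(r,\theta))=r\bigl(\cos\widetilde\phi_{L}(r,\theta),\,\sin\widetilde\phi_{L}(r,\theta)\bigr),
\]
a vector at $\psi(r,\theta)$ pointing in the Euclidean direction $\widetilde\phi_{L}(r,\theta)\ (\mathrm{mod}\ \pi)$. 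On the other hand, since $\psi$ is a local isometry carrying $\partial_{r}$ to the radial direction, the identity \eqref{E:defAngleP} rewrites as
\[
\angle_{\mathrm{Eucl}}\bigl[(\cos\theta,\sin\theta),\,d\psi_{(r,\theta)}(P(r,\theta))\bigr]=\angle_{g}\bigl[(\cos\theta,\sin\theta),\,B(L(\psi(r,\theta)))\bigr]\pmod\pi .
\]
Because $B(L)$ and its integral manifolds depend on $g$ only through its conformal class, the coordinates $(x,y)$ may be taken isothermal (still with $g(0,0)=\mathrm{id}$), so that $g$-angles agree with Euclidean angles; then $d\psi_{(r,\theta)}(P(r,\theta))$ and $B(L(\psi(r,\theta)))$ make the same angle with the radial direction at $\psi(r,\theta)$ and therefore span the same line. (Equivalently, one reads this off from the displayed formula for $d\psi(P)$ together with the definition $\phi_{L}(r,\theta)=\angle_{g}[(1,0),B(L(\cdot))]$ of Proposition~\ref{P:blowup}.)

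Finally I would assemble the pieces: $\psi$ is a local diffeomorphism whose differential carries the line field $\R\,P$ onto $B(L)$ at every point of $(0,\delta)\times(\R/4\pi\Z)$, so it maps each integral line of $P$ onto an immersed integral manifold of $L$; and since $\psi$ is surjective onto the punctured ball, through every point of the punctured ball the integral manifold of $L$ is, locally, the $\psi$-image of an integral line of $P$. Hence $\psi$ takes the foliation by integral lines of $P$ onto the foliation by integral manifolds of $L$, which is the assertion. I expect the only genuinely delicate point to be the bookkeeping in the second step—keeping the $\mathrm{mod}\ \pi$ conventions straight and justifying the passage from $g$-angles to Euclidean angles at the target point—everything else being either the routine trigonometric identity above or a formal consequence of $\psi$ being a Riemannian covering.
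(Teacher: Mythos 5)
Your proof is correct and follows essentially the same route as the paper's: compute $D\psi$, push $P$ forward using the angle-addition formulas to get $\psi_*P(r,\theta)=r\bigl(\cos\widetilde\phi_L(r,\theta),\sin\widetilde\phi_L(r,\theta)\bigr)$, and conclude that this spans $B(L)$. The one point where you go beyond the paper — observing that $\widetilde\phi_L$ is a $g$-angle while the pushed-forward vector realizes it as a Euclidean angle, and fixing this by taking the coordinates of Proposition~\ref{P:blowup} to be isothermal (legitimate, since $B(L)$ depends on $g$ only through its conformal class) — is a genuine subtlety that the paper's own proof passes over silently, so your added care is welcome rather than superfluous.
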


\begin{proof}
The map $\psi$ is a local diffeomorphism since the differential of $\psi$ 
is given by
$$
D\psi (r,\theta)=
\begin{pmatrix}
\cos\theta & -r\sin\theta
\\
\sin\theta & r\cos \theta
\end{pmatrix}.
$$
Hence, 
\begin{align*}
\psi_*(P(r,\theta))=\begin{pmatrix}
\cos\theta & -r\sin\theta
\\
\sin\theta & r\cos \theta
\end{pmatrix}
\begin{pmatrix}
	r\cos(\widetilde{\phi}_L(r,\theta)-\theta)
	\\
	\sin(\widetilde{\phi}_L(r,\theta)-\theta)
\end{pmatrix}
&=
\begin{pmatrix}
	r\cos(\widetilde{\phi}_L(r,\theta))
	\\
	r\sin(\widetilde{\phi}_L(r,\theta))
\end{pmatrix}.
\end{align*}
Thus, the locally defined vector field $\psi_*P$ is parallel to $L$, 
concluding the proof of the lemma.
\end{proof}

We are now ready to prove Proposition \ref{P:TopEqMain}.

\begin{proof}[Proof of Proposition \ref{P:TopEqMain}]
Let $\delta>0$ (respectively, $\delta'>0$) and $P$ (respectively, $P'$) be the vector field on $(-\delta,\delta)\times (\R/4\pi \Z)$ (respectively $(-\delta',\delta')\times (\R/4\pi \Z)$) introduced in Proposition~\ref{P:blowup}. 
Consider $V$, $V'$ and the homeomorphism $h:V\rightarrow V'$
introduced in Lemma~\ref{L:TopEqLift}. Finally, following Lemma~\ref{L:TopEqPolar}, let $\psi$ and $\psi'$ be the local diffeomorphisms 
$$
\begin{array}{rccl}
\psi:&(0,\delta)\times \R/4\pi\Z&\longrightarrow &D_p(\delta)=\{0<x^2+y^2<\delta^2\}
\\
&(r,\theta)&\longmapsto&(x,y)=(r\cos\theta,r\sin\theta)
\end{array}
$$ 
and 
$$
\begin{array}{rccl}
\psi':&(0,\delta')\times \R/4\pi\Z &\longrightarrow &D'_{p'}(\delta')=\{0<x'^2+y'^2<\delta'^2\}
\\
&(r,\theta)&\longmapsto&(x',y')=(r\cos\theta,r\sin\theta).
\end{array}
$$ 

The translation $T:(r,\theta)\mapsto (r,\theta+2\pi)$ induces  a natural fiber bundle structure $\pi:V\rightarrow V/_\sim$ (where $(r,\theta){\sim} (r',\theta')$ if $r=r'$ and $\theta=\theta'\pmod{2\pi}$).  Notice that $V/_\sim$ is a neighborhood of $\{0\}\times (\R/2\pi \Z)$ in $(-\delta,\delta)\times (\R/2\pi\Z)$. 
Likewise we define $\pi':V'\rightarrow V'/_\sim\subset (-\delta',\delta')\times (\R/2\pi\Z)$.

Since $h\circ T=T\circ h$, there exists a homeomorphism $\bar{h}:V/_\sim\rightarrow V'/_\sim$ such that 
\begin{equation}\label{E:diagCom}
\begin{array}[c]{ccc}
V&\stackrel{h}{\longrightarrow}&V'
\\
\downarrow\scriptstyle{\pi}&&\downarrow\scriptstyle{\pi'}
\\
V/_\sim & \stackrel{\bar{h}}{\longrightarrow} & V'/_\sim
\end{array}
\end{equation}
commutes.

Since $P$ and $P'$ do not have singularities on $(0,\delta)\times (\R/4\pi\Z)$ and $(0,\delta')\times(\R/4\pi\Z)$ respectively, 
then the line field spanned by each of them has no singularity and is $2\pi$-periodic with respect to the second variable.
Therefore, one can identify $\pi_*P$ and $\pi'_*P'$ with two line fields without singularities on $(0,\delta)\times(\R/2\pi\Z)$ and $(0,\delta')\times(\R/2\pi\Z)$ respectively.
The commutativity of diagram~\eqref{E:diagCom} and Lemma~\ref{L:TopEqLift} then show that $\bar{h}$ is a homeomorphism between $V/_\sim$ and $V'/_\sim$ that maps the integral manifolds of $\pi_*P$ onto the integral manifolds of $\pi'_*P'$.

Since $\psi\circ T =\psi$ on $(0,\delta)\times (\R/4\pi\Z)$, there exists a diffeomorphism $\widehat{\psi}:V_+/_\sim \rightarrow D_p(\delta)\backslash\{p\}$ such that $\widehat{\psi}\circ\pi=\psi$, where $V_+=V\cap ((0,\delta)\times (\R/4\pi\Z))$. In particular $\widehat{\psi}(V_+/_\sim)\cup \{p\}$ is a neighborhood of $p$.
Lemma~\ref{L:TopEqPolar} implies that $\widehat{\psi}^{-1}$ is a homeomorphism from $\widehat{\psi}(V_+/_\sim)$ onto $V_+/_\sim$ that maps the integral manifolds of $L$ onto the integral manifolds of $\pi_*P$. One can similarly define $\widehat{\psi}'$ on $V_+'/_\sim$, which satisfies  analogous properties.

The topological equivalence of $L$ and $L'$ at $p$ and $p'$ can
therefore be proven through the homeomorphism $H:\widehat{\psi}(V_+/_\sim)\cup\{p\}\to \widehat{\psi}'(V'_+/_\sim)\cup\{p\}$ defined by 
$H(p)=p'$ and 
$$
H:\widehat{\psi}(V_+/_\sim)\stackrel{\widehat{\psi}^{-1}}{\longrightarrow}
V_+/_\sim\stackrel{\bar{h}}{\longrightarrow}V_+'/_\sim
\stackrel{\widehat{\psi}'}{\longrightarrow}\widehat{\psi}'(V_+'/_\sim).
$$ 
By construction, $H$ is indeed a homeomorphism between a neighborhood of  $p$ and a neighborhood of $p'$ which takes the integral manifolds of $L$ onto those of $L'$.
\end{proof}

\section{The role of the metric $g$}
\label{Section:The_role_of_the_metric}

\subsection{What changes if we change the metric $g$}
\label{SubSection:What_changes_if_we_change_the_metric}

In this paper, the metric $g$ is fixed from the beginning and the main results (\MT , Propositions  \ref{P:GlobalDef} and \ref{P:indices}) are independent of its choice. It is natural to ask which properties are affected by the choice of $g$. The following example shows that a proto-line-field having a Lemon singularity at a point $p$ for a certain metric can have a Monstar singularity for another metric. Notice however that the Star singularity cannot be transformed into a Lemon or Monstar singularity by changing $g$, since they have different indices.

\begin{ex}\label{R:bifurcation}
For every $\lambda>0$ consider the Riemannian metric  $g_\lambda=dx^2+\lambda^2 dy^2$ on $\R^2$.
Let $X(x,y)=\begin{pmatrix}
x\\y
\end{pmatrix}$ and $Y(x,y)=\begin{pmatrix}
1\\0
\end{pmatrix}$. Then the singularity of $L=(X,Y)$ at $(0,0)$ can be a Lemon singularity or a Monstar singularity depending on $\lambda$.
Indeed, let $\phi_X(\theta)=
\angle_{g_\lambda}
\left[
	\begin{pmatrix}
	1 \\ 0
	\end{pmatrix},X(\cos\theta,\sin\theta)
\right]$ and $\phi_L(\theta)=
\angle_{g_\lambda}
\left[
	\begin{pmatrix}
	1 \\ 0
	\end{pmatrix}, B_{g_\lambda}(X(\cos\theta,\sin\theta),Y(\cos\theta,\sin\theta))
\right]=\frac{1}{2}\phi_X(\theta)$. Notice that $\theta=0$ is a fixed point of $\phi_X$ and let us compute 	the derivative of $\phi_X$ at $0$.

Since 
$$
g_\lambda\left(\begin{pmatrix}
1\\0
\end{pmatrix},\begin{pmatrix}
1\\0
\end{pmatrix}\right)=1,
\quad
 g_\lambda\left(\begin{pmatrix}
0\\1
\end{pmatrix},\begin{pmatrix}
0\\1
\end{pmatrix}\right)=\lambda^2,
$$ 
one has 
\begin{align*}
\| X(\cos\theta,\sin\theta) \|_{g_\lambda} \cos \phi_X(\theta) =
\cos\theta,\quad 
\lambda \| X(\cos\theta,\sin\theta) \|_{g_\lambda} \sin \phi_X(\theta) = 
\sin\theta.
\end{align*}
Therefore, $\tan\phi_X(\theta)=\lambda\tan\theta$ and thus $\frac{d}{d\theta}\phi_X(0)=\lambda$ and $\frac{d}{d\theta}\phi_L(0)=\lambda/2$. 

If $0<\lambda<2$, then $\theta=0$ is the only fixed point of $\phi_L$ and it is repulsive. If $\lambda>2$, then $\theta=0$ is an attractive fixed point of $\phi_L$. (See Figure~\ref{I:exBifurcation}.)

\begin{figure}[ht!]
\begin{center}
        \subfloat[$\lambda=1$]{\includegraphics[width=0.28\textwidth]{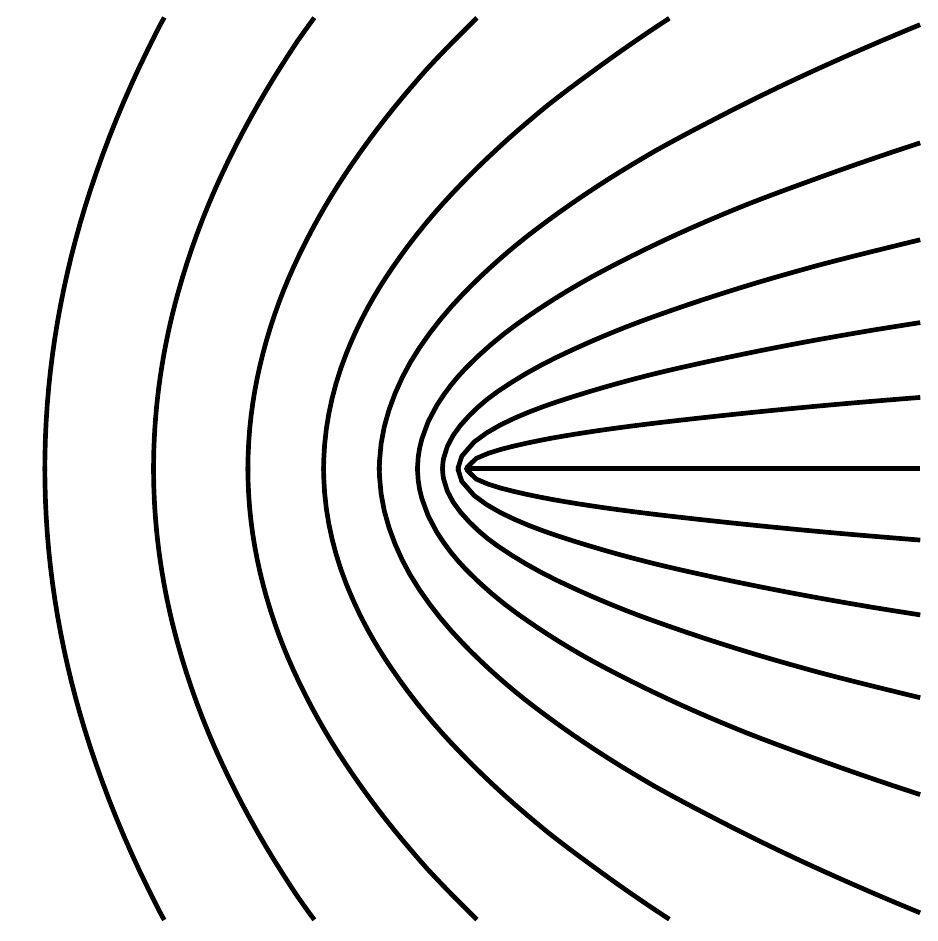}}
        \hspace{0.05\textwidth}
        \subfloat[$\lambda=2$]{\includegraphics[width=0.28\textwidth]{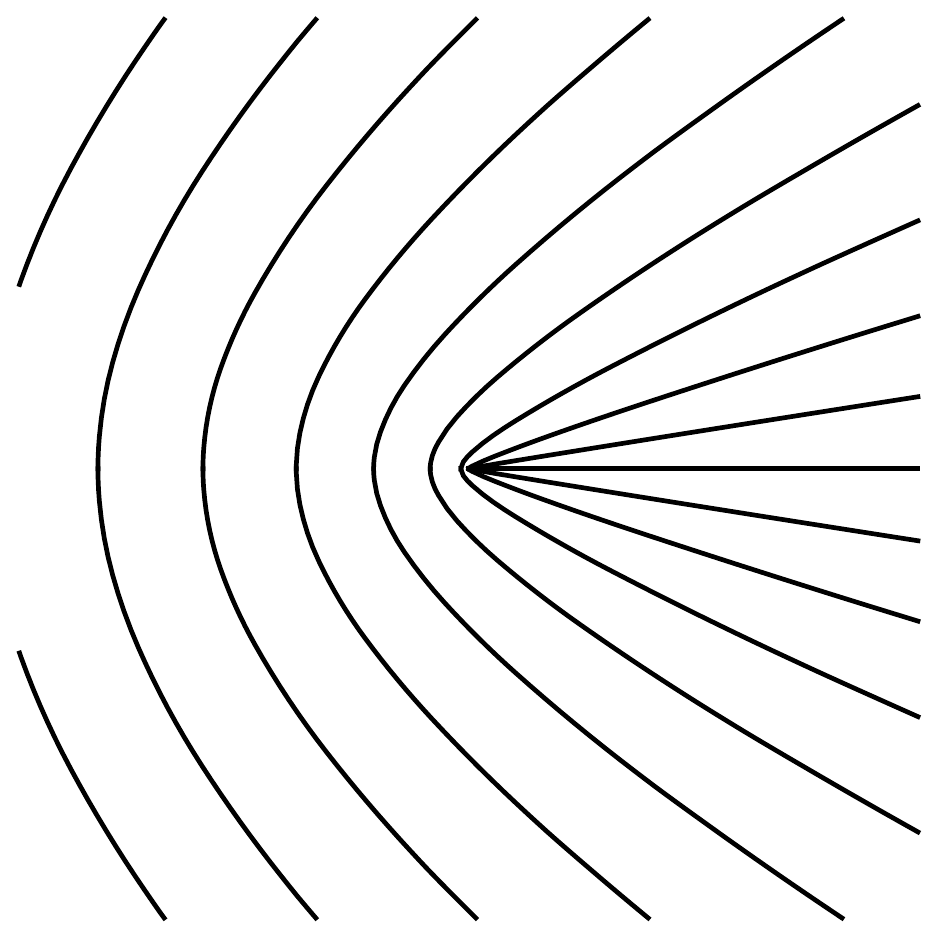}}
        \hspace{0.05\textwidth}
        \subfloat[$\lambda=3$]{\includegraphics[width=0.28\textwidth]{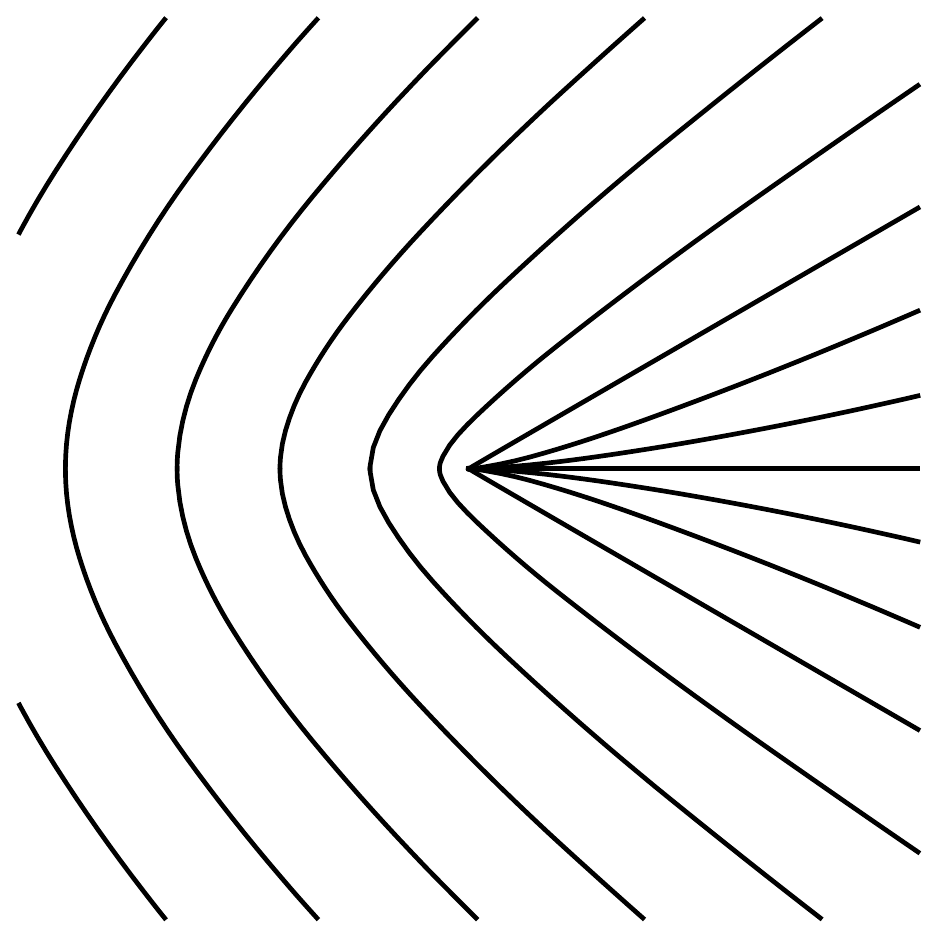}}
        \caption{Integral manifolds of the proto-line-fields of Example \ref{R:bifurcation} for three values of $\lambda$.}
        \label{I:exBifurcation}
\end{center}

\end{figure}

Notice that the bifurcation value $\lambda=2$ corresponds to a 
case which is not \superhyper. Hence a proto-line-field that has a structurally stable singularity at a point $p$ for a certain metric, can have non-structurally stable singularities at $p$  for another metric. 
\end{ex}

The next proposition shows that if we take a smooth curve $\gamma$ passing through a hyperbolic singularity of a proto-line-field $L$, then the angle between the line field associated with $L$ and $\dot{\gamma}$, measured with respect to the metric $g$, makes a jump of $\pi/2$ at the singularity. Hence, changing the metric and keeping the same proto-line-field, 
produces a new line field for which the angle between 
$\dot{\gamma}$ 
and itself, measured with respect to the new metric,
jumps again of $\pi/2$.

\begin{prop}\label{p:jump}
Let $L$ be a proto-line-field on $(M,g)$ with a hyperbolic singularity at $p\in M$. Let $\gamma:(-1,1)\rightarrow M$ be a smooth curve on $M$ such that $\gamma(0)=p$ and $\dot\gamma(0)\ne 0$. Then 
$$
\lim_{t\rightarrow 0^+} \angle_g\left[\dot{\gamma}(t), B(L(\gamma(t))) \right]
=
\lim_{t\rightarrow 0^-} \angle_g\left[ \dot{\gamma}(t), B(L(\gamma(t)))  \right]+\pi/2\pmod{\pi}.
$$
\end{prop}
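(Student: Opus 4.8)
The plan is to reduce the statement to the study of the function $\widetilde\phi_L$ provided by the blow-up Proposition~\ref{P:blowup}, applied in a coordinate system $(x,y)$ centered at $p$ with $g(0,0)=\mathrm{id}$ as in that proposition. Write $\gamma$ in these coordinates and let $\theta_\gamma(t)$ be the angular coordinate of $\gamma(t)$, so that $\theta_\gamma(t)\to\theta_+$ as $t\to 0^+$ and $\theta_\gamma(t)\to\theta_-$ as $t\to0^-$, where (since $\dot\gamma(0)\neq0$) the tangent line at $p$ forces $\theta_-=\theta_+ + \pi\pmod{2\pi}$. The angle between $\dot\gamma(t)$ and the line field $B(L(\gamma(t)))$, measured with $g$, tends as $t\to0^\pm$ to the angle between the fixed direction $\dot\gamma(0)$ and the limiting line direction; by the first limit in Lemma~\ref{L:blowup} (quoted inside the proof of Proposition~\ref{P:blowup}), that limiting line direction is $\phi_{\bar L}(\theta_\pm)\pmod\pi$, where $\bar L$ is the linearization. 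So the two one-sided limits in the statement are, respectively,
$$
\angle_{\mathrm{Eucl}}\!\left[\dot\gamma(0),\;\phi_{\bar L}(\theta_+)\right]\quad\text{and}\quad \angle_{\mathrm{Eucl}}\!\left[\dot\gamma(0),\;\phi_{\bar L}(\theta_-)\right]\pmod\pi,
$$
where I abuse notation writing $\phi_{\bar L}(\cdot)$ both for the angle and for the corresponding line. (One must check the angle is continuous in $g$ and in the arguments; this is routine since $g(0,0)=\mathrm{id}$ and angles depend continuously on the metric away from degeneracies.)

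Thus the claim is equivalent to the purely linear identity
$$
\phi_{\bar L}(\theta+\pi)=\phi_{\bar L}(\theta)+\frac\pi2\pmod\pi
$$
for every $\theta$. This is exactly property \textbf{P2} of $F$ and $G$ from Section~\ref{Section:Linear_Euclidean_case} translated through Lemma~\ref{lemtT}: there $\phi_{\bar L}\circ F=\tfrac12 G+\tfrac12\alpha\pmod\pi$, and \textbf{P2} states $F(\theta+\pi)=F(\theta)+\pi$ and $G(\theta+\pi)=G(\theta)+\pi$ (for $\theta\in[-\pi,0)$, and by $2\pi$-periodicity everywhere). Substituting $\theta+\pi$ into $\phi_{\bar L}\circ F$ gives $\phi_{\bar L}(F(\theta)+\pi)=\tfrac12(G(\theta)+\pi)+\tfrac12\alpha = \phi_{\bar L}(F(\theta))+\tfrac\pi2\pmod\pi$; since $F$ is an orientation-preserving diffeomorphism of $\mathbb R/2\pi\mathbb Z$ commuting with the translation by $\pi$, every argument of $\phi_{\bar L}$ is of the form $F(\theta)$ and the displayed identity follows. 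A slightly more conceptual way to see the same thing: $X$ is linear, so $\bar X$ is linear, hence $\bar X(-v)=-\bar X(v)$; antipodal rays carry opposite $X$-directions, while $Y$ is constant; the bisector of $(-w,y)$ and the bisector of $(w,y)$ differ by a quarter turn (rotating one argument by $\pi$ rotates the bisecting line by $\pi/2$), which is precisely $\phi_{\bar L}(\theta+\pi)=\phi_{\bar L}(\theta)+\pi/2\pmod\pi$.

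Putting the two halves together: the $t\to0^-$ limit equals $\angle_{\mathrm{Eucl}}[\dot\gamma(0),\phi_{\bar L}(\theta_-)]=\angle_{\mathrm{Eucl}}[\dot\gamma(0),\phi_{\bar L}(\theta_++\pi)]=\angle_{\mathrm{Eucl}}[\dot\gamma(0),\phi_{\bar L}(\theta_+)]+\pi/2\pmod\pi$, which is the $t\to0^+$ limit plus $\pi/2\pmod\pi$, as claimed. The main obstacle is purely bookkeeping: one has to be careful that the hypothesis $\dot\gamma(0)\neq0$ is used (it guarantees $\theta_\gamma(t)$ has genuine one-sided limits differing by $\pi$, rather than $\gamma$ spiralling into $p$), and one has to justify exchanging the limit $t\to0$ with passing from the line field $B(L(\cdot))$ to its blow-up limit $\phi_{\bar L}$ — but that exchange is exactly the content of Lemma~\ref{L:blowup}/Proposition~\ref{P:blowup}, so no new work is needed. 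No genericity or \superhyper{} hypothesis is required here; only hyperbolicity of the singularity (so that $Y(p)\neq0$ after possibly swapping roles, and $DX(0,0)$ is invertible, ensuring $\phi_{\bar L}$ is well defined everywhere on $\mathbb R/2\pi\mathbb Z$).
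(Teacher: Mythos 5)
Your proposal is correct and follows essentially the same route as the paper: both reduce the statement to the fact that along antipodal rays the (linearized) hyperbolic field $X$ points in opposite directions while $Y$ is fixed, so the bisecting line turns by a quarter turn, with the existence of the one-sided limits supplied by the blow-up results (Proposition~\ref{P:blowup}/Lemma~\ref{L:blowup}). The paper merely streamlines the bookkeeping by reparametrizing and choosing coordinates so that $\gamma(t)=(t,0)$, whence the two limits are $\lim_{t\to0^+}\phi_L(t,0)$ and $\lim_{t\to0^+}\phi_L(t,\pi)$, instead of tracking $\theta_\gamma(t)\to\theta_\pm$ as you do.
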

\begin{proof}
Up to a change of parametrization, we can assume that $\gamma$ is parametrized by arc length, so that we can fix a system of coordinates $(x,y)$ on a neighborhood of $p=(0,0)$ such that $g(0,0)=\mathrm{id}$ and $\gamma$ coincides with the curve $t\mapsto(t,0)$. 

Then using the notation of Section~\ref{SubSection:Blowup}, for $t>0$ we have $\angle_g\left[\dot{\gamma}(t), B(L(\gamma(t)) )\right]=\phi_L(t,0)$ and $\angle_g\left[\dot{\gamma}(-t), B(L(\gamma(-t))) \right]=\phi_L(t,\pi)$.

Since $X$ has a hyperbolic singularity at $p$, we have 
that 
$$
\lim_{t\rightarrow 0^+} \frac{X(t,0)}{\|X(t,0)\|}
=-
\lim_{t\rightarrow 0^+}\frac{X(-t,0)}{\|X(-t,0)\|}.
$$
By definition of $L$, we then have
$$
\lim_{t\rightarrow 0^+} \phi_L(t,0)
=
\lim_{t\rightarrow 0^+} \phi_L(t,\pi)+\pi/2 \pmod{\pi}.
$$
\end{proof}

\begin{rk}
We know from Proposition~\ref{P:GlobalDef} that for every 
closed set $K\subset M$ and 
every section $L$ of $PT(M\setminus K)$, for every Riemannian metric $g$ on $M$, there exists a proto-line-field $(X,Y)$ such that $B(X,Y)=L$  on $M\setminus K$. 

Proposition~\ref{p:jump} says that, even if $K$ is made of isolated points, one cannot expect in addition that the 
singularities of 
$X$ and $Y$ are hyperbolic, unless some compatibility condition between $L$ and $g$ is satisfied at each point of $K$. 

In particular, 
a line field associated with a proto-line-field with hyperbolic singularities for a certain Riemannian metric is not in general associated with any proto-line-field with hyperbolic singularities for a different Riemannian metric.
\end{rk}

\subsection{How to construct a Riemannian metric from a pair of vector fields}
\label{SubSection:How_to_construct_a_Riemannian_metric}

The procedure of defining a line field by using two vector fields and a Riemannian metric (or a conformal structure) may look greedy and one may wonder if some alternative definition involving less functional parameters can lead to similar characterizations of structurally stable singularities. In this section we propose a way to get rid of the requirement of fixing a Riemannian metric. This is done by constructing 
a Riemannian metric from  two vector fields alone, at least in the generic case.

\begin{prop}\label{P:RiemMetricPair}
Let $X,Y$ be two vector fields on $M$ and set
\[
Z=\left[X,Y\right],\;W^1=\left[X,\left[X,Y\right]\right],\;W^2=\left[Y,\left[X,Y\right]\right].
\]
Generically with respect to $(X,Y)\in W^2(M)$, the vectors  $X(p),Y(p),Z(p),W^1(p),W^2(p)$ span $T_p M$ for every $p\in M$.
In this case 
\begin{equation}\label{E:genericpair1}
\left\|V\right\|=\min\left\{
\left\| u\right\|_2 \; | \; u\in \mathbb{R}^5,
\left( u_1 X+u_2 Y+u_3 Z+u_4 W^1+u_5 W^2 \right)(p)=V
\right\},\quad V\in T_pM,
\end{equation}
is a norm on $T_p M$ depending smoothly on $p$ and 
$$
g_{X,Y}(V,V')=\frac{1}{2}\left( \left\|V+V'\right\|^2 -\left\|V\right\|^2 -\left\|V'\right\|^2\right),\quad V,V'\in T_p M,
$$
defines a Riemannian metric  on $M$. 
\end{prop}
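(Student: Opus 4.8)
The plan is to split the statement into two parts: (I) proving that the spanning condition holds generically, and (II) showing that, once it holds, the right-hand side of \eqref{E:genericpair1} is the norm of an inner product on $T_pM$ depending smoothly on $p$, so that $g_{X,Y}$ is a Riemannian metric. For part (I), since the five vectors $X(p),Y(p),Z(p),W^1(p),W^2(p)$ depend only on the $2$-jet of $(X,Y)$ at $p$ (each iterated bracket uses at most two derivatives of $X,Y$), the natural tool is Thom's transversality theorem applied to the $2$-jet extension $j^2(X,Y)$. I would introduce the bad set $\Sigma\subset J^2(TM\oplus TM)$ of $2$-jets at a point $p$ for which the corresponding five vectors lie in a single line of $T_pM$ (including the case in which they all vanish); in a local trivialization $\Sigma$ is cut out by polynomial relations in the jet coordinates (vanishing of all $2\times2$ minors of the $2\times5$ matrix of the five vectors), hence is semialgebraic and splits into finitely many smooth submanifolds. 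It then suffices to show that each of them has codimension at least $3$: since $3>2=\dim M$, transversality forces $j^2(X,Y)$, for generic $(X,Y)$, to be disjoint from $\Sigma$, which is exactly the desired spanning property at every point.

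The heart of part (I) is therefore the codimension estimate. I would first consider the lower-order set $\Sigma_1=\{X(p),Y(p),Z(p)\text{ collinear}\}$, which depends only on the $1$-jet: for a fixed line $\ell$, the requirements $X(p)\in\ell$, $Y(p)\in\ell$ and $Z(p)=DY(p)X(p)-DX(p)Y(p)\in\ell$ are generically independent (they constrain $X(p)$, $Y(p)$ and $(DX(p),DY(p))$ respectively), giving codimension $3$, so letting $\ell$ vary over $\mathbb{RP}^1$ yields $\mathrm{codim}\,\Sigma_1=2$. On the sublocus $\{X(p)=0\}\cup\{Y(p)=0\}$ the codimension of $\Sigma_1$ is already at least $3$: for instance if $X(p)=0$, collinearity of $0,Y(p),Z(p)=-DX(p)Y(p)$ forces $Y(p)$ to be an eigenvector of $DX(p)$. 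On the complementary part, say where $X(p)\neq0$, I would use that $\Sigma_1$ imposes no constraint on the second derivatives of $(X,Y)$, whereas the quadratic $X(p)^{T}SX(p)$ with $S$ a free symmetric matrix shows that the second derivatives of $Y$ alone make $DZ(p)X(p)$ attain every vector of $T_pM$; hence the additional requirement $W^1(p)=DZ(p)X(p)-DX(p)Z(p)\in\ell$ defining $\Sigma$ is a genuine codimension-$1$ condition transverse to the relations cutting out $\Sigma_1$. Thus every stratum of $\Sigma$ has codimension at least $3$, and Thom's transversality theorem completes part (I).

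For part (II), assume the spanning condition, fix a local frame of $TM$, and let $E(p)$ be the $2\times5$ matrix whose columns are the coordinates of $X(p),Y(p),Z(p),W^1(p),W^2(p)$, so that $u\mapsto E(p)u$ is the surjection $\R^{5}\to T_pM$ of \eqref{E:genericpair1}. The minimum there is the distance from the origin to the nonempty affine subspace $\{u\mid E(p)u=V\}$, attained at its unique point in $(\ker E(p))^{\perp}$, namely the Moore--Penrose pseudoinverse image $E(p)^{+}V=E(p)^{T}(E(p)E(p)^{T})^{-1}V$, where $E(p)E(p)^{T}$ is invertible precisely because $E(p)$ has rank $2$. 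Hence $\|V\|^{2}=\|E(p)^{+}V\|_2^{2}=V^{T}(E(p)E(p)^{T})^{-1}V$, so $\|\cdot\|$ is the norm of a positive-definite quadratic form, and the polarization formula in the statement recovers its inner product $g_{X,Y}(p)$, of matrix $(E(p)E(p)^{T})^{-1}$ in the chosen frame. This is smooth in $p$ since $E(p)$, built from $X,Y$ and their iterated Lie brackets, is smooth and $E(p)E(p)^{T}$ is everywhere invertible; and under a change of frame by $\Lambda\in GL_2$ one has $E\mapsto\Lambda E$, hence $(EE^{T})^{-1}\mapsto(\Lambda^{-1})^{T}(EE^{T})^{-1}\Lambda^{-1}$, which is the transformation law of a $(0,2)$-tensor, so $g_{X,Y}$ is a well-defined Riemannian metric on $M$.

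I expect the main difficulty to lie in the codimension count of part (I): since collinearity of $X,Y,Z$ is only codimension $2$, one must check carefully that at the (generically isolated in $M$) points where it occurs, imposing in addition that $W^1$ or $W^2$ lie in the same line really raises the total codimension to $3$. This relies on the second-order jet acting nontrivially on $DZ(p)$, which is exactly where the hypothesis that $X(p)$ or $Y(p)$ be nonzero is used, the strata where one of them vanishes requiring the separate, easier argument sketched above. Part (II) is, by comparison, routine linear algebra.
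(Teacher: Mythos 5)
Your proposal is correct and takes essentially the same route as the paper: part (I) is Thom's transversality theorem applied to the $2$-jet extension together with a codimension-$3$ bound on the non-spanning locus (the paper merely asserts this codimension for a set cut out by wedge conditions, while you stratify and count more carefully, separately handling the degenerate strata where $X(p)=0$ or $Y(p)=0$). Part (II) is the same orthogonal-projection computation: your pseudoinverse formula $\left\|V\right\|^{2}=V^{T}(E(p)E(p)^{T})^{-1}V$ is precisely the paper's explicit minimizer $u_V$, with positive-definiteness read off directly rather than through the parallelogram law.
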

\begin{proof}[Proof of Proposition~\ref{P:RiemMetricPair}]
Up to reducing $M$ to a coordinate chart, the set
\[
\mathcal{F}=
\left\{j_2 (X,Y)(p)
\left|\begin{array}{l}
p\in M,\,X,Y\mbox{ vector fields on }M\\
X(p)\wedge Y(p)= 0\\
{[X,Y]}(p)\wedge X (p)=0\\
\left[X,\left[X,Y\right]\right](p)\wedge X(p)=0\\
\end{array}\right.\right\}
\]
can be identified with a submanifold of codimension 3 of $J^2\left(M,\mathbb{R}^4\right)$, the set of 2-jets of maps from $M$ to $\mathbb{R}^4$. 
By Thom's transversality theorem, for
a generic pair $(X,Y)\in W^2(M)$, 
there exist no $p\in M$ such that $j_2 (X,Y)(p)\in \mathcal{F}$. 
In particular, $X(p),Y(p),Z(p),W^1(p),W^2(p)$ span $T_p M$ for every $p\in M$.

Fix now $p\in M$, $V\in T_p M$ and let us give an explicit expression of the vector $u$ realizing the minimum in \eqref{E:genericpair1}, assuming that $X(p),Y(p),Z(p),W^1(p),W^2(p)$ span $T_p M$.
Write in local coordinates $V=(V_1,V_2)$, $X=(X_1,X_2)$, $Y=(Y_1,Y_2)$, {\it etc}. Let 
\begin{equation*}\label{E:definition_si}
s_i=\left(X_i,Y_i,Z_i,W^1_i,W^2_i\right)(p) \in \mathbb{R}^5, \; i\in \{1,2\},
\end{equation*}
and $r^4=|s_1|^2|s_2|^2-\left\langle s_1,s_2\right\rangle^2$, which is positive because $s_1$ and $s_2$ cannot be colinear.
Since the two affine hyperplanes  $H_1=\{u\in \mathbb{R}^5\mid \left\langle u,s_1 \right\rangle=V_1\}$ and $H_2=\{u\in \mathbb{R}^5\mid \left\langle u,s_2 \right\rangle=V_2\}$ are not parallel,  minimizing $\left\| u\right\|_2$ in \eqref{E:genericpair1} comes down to finding the orthogonal projection $u_V$ of $0_{\mathbb{R}^5}$ onto $H_1\cap H_2$.
Since the orthogonal subspace to $H_1\cap H_2$ is the span of $s_1$ and $s_2$, then the point $u_V$ is characterized by the conditions
\[
\left\langle u_V,s_1 \right\rangle =V_1,\quad \left\langle u_V,s_2\right\rangle =V_2,\quad u_V=\lambda_1 s_1+\lambda_2 s_2, \; \lambda_1,\lambda_2\in \mathbb{R}.
\]
Hence  $(\lambda_1,\lambda_2)$ is the unique solution of the system
\begin{equation}\label{syst1}
\left\{\begin{array}{ccc}
\lambda_1 |s_1|^2+\lambda_2 \left\langle s_1,s_2 \right\rangle &=&V_1\\
\lambda_1 \left\langle s_1,s_2 \right\rangle +\lambda_2 |s_2|^2&=&V_2,
\end{array}\right.
\end{equation}
which leads to the characterization of $u_V$ as 
\begin{equation*}\label{eq:u_V}
u_V=\left(\frac{V_1 |s_2|^2 -V_2 \left\langle s_1,s_2\right\rangle}{r^4} \right) s_1 + \left(\frac{V_2 |s_1|^2 -V_1 \left\langle s_1,s_2\right\rangle}{r^4}  \right) s_2.
\end{equation*}

Since $u_V$ depends linearly on $V$, it is then easy to see that the norm  $\left\|\cdot \right\|=\|u_\cdot\|_2$ depends smoothly on $p$. This norm derives from a scalar product since it verifies the parallelogram law, as we are now going to show. 
Let $V,V'\in T_pM$ and $u_V$, $u_{V'}$ be defined as above. 
Then $u_V=\lambda_1 s_1+ \lambda _2 s_2$ and $u_{V'}=\lambda'_1 s_1+ \lambda' _2 s_2$, where $(\lambda_1,\lambda _2)$ and $(\lambda'_1 ,\lambda' _2)$ are the respective unique solutions of  system \eqref{syst1}. Then, by linearity, we have that $u_{V+V'}=(\lambda_1+\lambda'_1) s_1+ (\lambda_2+\lambda'_2) s_2$ and $u_{V-V'}=(\lambda_1-\lambda'_1) s_1+ (\lambda_2-\lambda'_2) s_2$.
Thus 
\begin{align*}
\left\|V\right\|^2+\left\|V'\right\|^2&=\left\|u_V\right\|_2^2+\left\|u_{V'}\right\|_2^2
=\left\|u_V+u_{V'}\right\|_2^2+\left\|u_V-u_{V'}\right\|_2^2
=\left\|V+V'\right\|^2+\left\|V-V'\right\|^2.
\end{align*}
\end{proof}

\begin{rk}
Notice that for every compact $K\subset M$ the set of pairs $(X,Y)$ such that 
the metric $g_{X,Y}$ introduced in Proposition~\ref{P:RiemMetricPair} is well-defined on $K$ 
is open in $\W^2(M)$ and $g_{X,Y}$ depends continuously on $(X,Y)$ on it. 
Hence, because of the continuity of the linearization of a proto-line-field with respect to $(X,Y)$ and $g$ (see Definition~\ref{d:linearization}), we deduce the local structural stability of Lemon, Monstar and Star singularities for proto-line-fields $(X,Y)$ with respect to the metric $g_{X,Y}$, in the sense that if at a singular point $p$ the linearized system is satisfies condition 1, 2, or 3 of Theorem \ref{T:thmcas}, 
then the same property is satisfied by every small perturbation of $(X,Y)$ in the $\W^2(M)$ topology.

In order to prove 
that generically with respect to $(X,Y)$
the singularities of the proto-line-field $(X,Y)$ with respect to the metric $g_{X,Y}$
are Darbouxian, one should further prove that non-Darbouxian singularities can be removed by small perturbations of $(X,Y)$. 
Although we expect this result to be true (in the $\W^2(M)$ topology and not in the $\W^1(M)$ one as it was the case in Theorem~\ref{T:MainTheorem}), this does not follow directly from the results in this paper.
\end{rk}

\appendix
\section{Proof of Proposition \ref{P:SigneDer}}
\label{Appendix:proof_of_Proposition}
Let us first compute $F'$ and $G'$. By definition of $F$ we have
$\sin\theta\cos F(\theta)=E\cos\theta\sin F(\theta)$ 
from which we obtain
\begin{align*}
E\cos^2\theta\sin F(\theta)-F'(\theta) \sin^2\theta\sin F(\theta) &=E^2  F'(\theta) \cos^2\theta\sin F(\theta)- E \sin^2\theta\sin F(\theta). \\
\shortintertext{Then either $\sin F(\theta)=0$, and then $\sin\theta=0$, or}
E\cos^2\theta-F'(\theta) \sin^2\theta &=E^2  F'(\theta) \cos^2\theta- E \sin^2\theta .
\end{align*}
By smoothness of $F$,
\begin{equation}\label{E:dF}
F'(\theta)=\frac{E}{E^2\cos^2\theta+\sin^2\theta},\quad \forall\theta\in [-\pi,\pi).
\end{equation}

Concerning $G$, in the case of focuses,
we immediately get that $G(\theta)=F(\theta+\varphi)$. In the other two cases, reasoning as for $F$, we get 
\begin{equation}\label{E:dG}
G'(\theta)=\frac{E\cos(2\varphi)}{E^2\cos^2(\theta-\varphi)+\sin^2(\theta+\varphi)},\quad\forall \theta\in [-\pi,\pi).
\end{equation}

We prove Proposition~\ref{P:SigneDer} by considering separately the three cases corresponding to the type of singularity of $X$. The saddle case follows immediately from \eqref{E:dF} and \eqref{E:dG}, since 
 $\varphi \in (\pi/4,3\pi/4)\pmod{\pi}$ implies that   $\cos(2\varphi)<0$.
The following two lemmas consider the focus and node case, respectively.

\begin{lem}
Let the singularity of $X$ be a focus and set $A=\sqrt{5-2\cos(2\varphi)}$. Then $\Phi$ and $\kappa$ as in the statement of Proposition \ref{P:SigneDer} exist and are characterized by 
\begin{equation}\label{eq:firstA}
\left\{\begin{array}{rl}
A\cos(\Phi)=&2\cos(2\varphi)-1\\
A\sin(\Phi)=&2\sin(2\varphi)
\end{array}\right.
\end{equation}
and \[\kappa=\frac{E^2+1}{A(E^2-1)}.\]
\end{lem}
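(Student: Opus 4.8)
The plan is to reduce everything to the explicit formula \eqref{E:dF} for $F'$, combined with the identity $G=F(\,\cdot\,+\varphi)$ which holds in the focus case (as recalled just above): indeed, by \textbf{C1}, $X(E\cos\theta,\sin\theta)=C\,(E\cos(\theta+\varphi),\sin(\theta+\varphi))$ with $C>0$, so it points in the direction of angle $F(\theta+\varphi)$. Hence $(2F-G)'(\theta)=2F'(\theta)-F'(\theta+\varphi)$. The first step is to rewrite the denominator in \eqref{E:dF} via the double-angle identity $E^2\cos^2\psi+\sin^2\psi=\tfrac12\bigl((E^2+1)+(E^2-1)\cos 2\psi\bigr)$, which gives $F'(\psi)=\dfrac{2E}{(E^2+1)+(E^2-1)\cos 2\psi}$.

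Since $E>1$ by \textbf{C1}, both $(E^2+1)+(E^2-1)\cos 2\theta$ and $(E^2+1)+(E^2-1)\cos(2\theta+2\varphi)$ are bounded below by $2>0$. Putting $2F'(\theta)-F'(\theta+\varphi)$ over their common (positive) denominator, the sign of $(2F-G)'(\theta)$ equals the sign of
$$
2\bigl[(E^2+1)+(E^2-1)\cos(2\theta+2\varphi)\bigr]-\bigl[(E^2+1)+(E^2-1)\cos 2\theta\bigr]=(E^2+1)+(E^2-1)\bigl(2\cos(2\theta+2\varphi)-\cos 2\theta\bigr).
$$
Now expand $2\cos(2\theta+2\varphi)-\cos 2\theta=(2\cos 2\varphi-1)\cos 2\theta-2\sin 2\varphi\,\sin 2\theta$ and write it as $A\cos(2\theta+\Phi)$, where $A:=\sqrt{(2\cos 2\varphi-1)^2+(2\sin 2\varphi)^2}>0$ and $\Phi$ is determined by the relations \eqref{eq:firstA}. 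The displayed quantity then becomes $(E^2+1)+(E^2-1)A\cos(2\theta+\Phi)=(E^2-1)A\bigl(\cos(2\theta+\Phi)+\kappa\bigr)$ with $\kappa:=\dfrac{E^2+1}{(E^2-1)A}>0$. As $(E^2-1)A>0$, this is precisely the asserted description of the sign of $(2F-G)'(\theta)$.

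The argument is elementary trigonometric bookkeeping; I expect no genuine obstacle. The only points requiring a little care are the strict positivity of the two denominators (this is exactly where the hypothesis $E>1$ of \textbf{C1} is used) and the non-degeneracy $A>0$ — which holds because $(2\cos 2\varphi-1,2\sin 2\varphi)$ never vanishes (if $\sin 2\varphi=0$ then $\cos 2\varphi=\pm1$, hence $2\cos 2\varphi-1\in\{1,-3\}$) — so that the phase $\Phi$ and the constant $\kappa$ are well defined.
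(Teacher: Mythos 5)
Your argument is correct and is essentially the paper's proof run in the forward direction: the paper starts from the inequality $\cos(2\theta+\Phi)>-\kappa$ and unwinds the same chain of trigonometric identities back to $2F'(\theta)-G'(\theta)>0$ (using, as you do, that $G(\theta)=F(\theta+\varphi)$ in the focus case), whereas you start from $2F'(\theta)-F'(\theta+\varphi)$ and exhibit it directly as a positive multiple of $\cos(2\theta+\Phi)+\kappa$. Note that your computation gives $A=\sqrt{(2\cos 2\varphi-1)^2+(2\sin 2\varphi)^2}=\sqrt{5-4\cos(2\varphi)}$, which is indeed the value forced by \eqref{eq:firstA}; the expression $\sqrt{5-2\cos(2\varphi)}$ in the lemma's statement appears to be a typo, and your version is the consistent one.
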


\begin{proof}
Let $\Phi$ satisfy \eqref{eq:firstA}. 
By definition of $\Phi$ and $A$, the inequality 
\[
\cos(2\theta+\Phi)>-\frac{E^2+1}{A(E^2-1)}\]
is equivalent to
\begin{equation}\label{**}
(E^2-1)\cos(2\theta)(2\cos(2\varphi)-1)-2(E^2-1)\sin(2\theta)\sin(2\varphi)+E^2+1>0.
\end{equation}
By elementary trigonometric identities, condition \eqref{**} is equivalent to 
\[
2(E^2\cos^2(\theta+\varphi)+\sin^2(\theta+\varphi))- (E^2\cos^2\theta+\sin^2\theta )>0,
\]
which, in turns,  is equivalent to
\[
2F'(\theta)-G'(\theta)=\frac{2E}{E^2\cos^2\theta+\sin^2\theta}-\frac{E}{E^2\cos^2(\theta+\varphi)+\sin^2(\theta+\varphi)}>0.
\]
\end{proof}

\begin{lem}
Let the singularity of $X$ be a node and set $$A=\sqrt{\frac{1}{2} \left(5+6E^2+5E^4 -(3+10 E^2+3E^4)\cos4\varphi\right) }.$$ Then $\Phi$ and $\kappa$ as in the statement of Proposition \ref{P:SigneDer} exist and are characterized by 
\begin{equation}\label{eq:secondA}
\left\{\begin{array}{rl}
A\cos(\Phi)=&(E^2-1)\cos(2\varphi)\\
A\sin(\Phi)=&-2(E^2+1)\sin(2\varphi)
\end{array}\right.
\end{equation}
and \[\kappa=\frac{(E^2+1)}{A}\left(2-\cos2\varphi\right).\]
\end{lem}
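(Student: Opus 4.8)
The plan is to reduce the statement to a routine trigonometric computation, exactly parallel to the focus case treated above. First I would specialize \eqref{E:dF} and \eqref{E:dG} to the node normal form: $F'(\theta)=E/(E^2\cos^2\theta+\sin^2\theta)$ and $G'(\theta)=E\cos(2\varphi)/(E^2\cos^2(\theta-\varphi)+\sin^2(\theta+\varphi))$, both denominators being strictly positive. Since $E>0$, the sign of $(2F-G)'(\theta)$ coincides with the sign of
$$S(\theta):=2\bigl(E^2\cos^2(\theta-\varphi)+\sin^2(\theta+\varphi)\bigr)-\cos(2\varphi)\bigl(E^2\cos^2\theta+\sin^2\theta\bigr).$$

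Next I would write $S$ as an affine function of $\cos 2\theta$ and $\sin 2\theta$: using $\cos^2 x=\tfrac12(1+\cos 2x)$, $\sin^2 x=\tfrac12(1-\cos 2x)$ and the addition formulas for $\cos(2\theta\pm 2\varphi)$, one gets
$$2S(\theta)=(E^2+1)(2-\cos 2\varphi)+(E^2-1)\cos(2\varphi)\cos 2\theta+2(E^2+1)\sin(2\varphi)\sin 2\theta.$$
Now I would match the oscillating part with $A\cos(2\theta+\Phi)=A\cos\Phi\cos 2\theta-A\sin\Phi\sin 2\theta$: this forces precisely relations \eqref{eq:secondA}, hence $A^2=(E^2-1)^2\cos^2 2\varphi+4(E^2+1)^2\sin^2 2\varphi$. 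Expanding $\cos^2 2\varphi=\tfrac12(1+\cos 4\varphi)$, $\sin^2 2\varphi=\tfrac12(1-\cos 4\varphi)$ and collecting, $A^2=\tfrac12\bigl(5+6E^2+5E^4-(3+10E^2+3E^4)\cos 4\varphi\bigr)$, which is the value in the statement. It remains to check $A>0$: if $A=0$ then $\sin 2\varphi=0$ and $(E^2-1)\cos 2\varphi=0$; but for a node the normal form of Lemma~\ref{suit} has $\varphi\in(-\pi/4,\pi/4)\pmod\pi$ with nonzero off-diagonal entries, so $\varphi\neq 0\pmod\pi$ and $\sin 2\varphi\neq 0$, a contradiction. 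Thus $\Phi\in[-\pi,\pi)$ is well and uniquely defined by \eqref{eq:secondA}, and we obtain $2S(\theta)=A\bigl(\cos(2\theta+\Phi)+\kappa\bigr)$ with $\kappa=(E^2+1)(2-\cos 2\varphi)/A$, which matches the claimed value; moreover $\kappa>0$ since $2-\cos 2\varphi>0$ and $A>0$. Therefore $(2F-G)'(\theta)$ has the same sign as $\cos(2\theta+\Phi)+\kappa$, with $\Phi,\kappa$ characterized as stated, completing the node case of Proposition~\ref{P:SigneDer}.

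The proof is purely computational, so the only real danger is a sign or coefficient slip in expanding $\cos(2\theta-2\varphi)$ and $\cos(2\theta+2\varphi)$ and in regrouping the $\cos 2\theta$, $\sin 2\theta$ terms; the single genuinely non-formal point — needed so that $\Phi$ exists — is the verification that $A>0$, which uses the constraint $\varphi\in(-\pi/4,\pi/4)\pmod\pi$ together with $\varphi\neq 0\pmod\pi$ coming from Lemma~\ref{suit}.
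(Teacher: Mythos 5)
Your proof is correct and follows essentially the same route as the paper: clear the (positive) denominators of $2F'-G'$, expand the numerator as an affine function of $\cos 2\theta$ and $\sin 2\theta$, and match amplitude and phase with $A\cos(2\theta+\Phi)+A\kappa$. In fact your constant term $(E^2+1)(2-\cos 2\varphi)$ is the one consistent with the stated $\kappa$ (the paper's intermediate display carries a sign slip, writing $(1-E^2)(2-\cos 2\varphi)$), and your verification that $A>0$ via $\sin 2\varphi\neq 0$ makes explicit a point the paper leaves unaddressed.
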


\begin{proof}
In this case we have
\[
2F'(\theta)-G'(\theta)=\frac{2E}{E^2\cos^2\theta+\sin^2\theta}-\frac{E\cos(2\varphi)}{\sin^2(\theta+\varphi)+E^2\cos^2(\theta-\varphi)},\]
and it follows by elementary trigonometric identities that $2F'(\theta)-G'(\theta)>0$ if and only if 
\[
(E^2-1)\cos 2\varphi \cos 2\theta +2(E^2+1)\sin 2\varphi\sin 2\theta +(1-E^2)(2-\cos 2\varphi)>0.
\]
By definition of $A$ and letting $\Phi$ satisfy \eqref{eq:secondA}, this inequality is equivalent to
\[
A\cos(2\theta+\Phi)>-\left(2(E^2+1)+(1-E^2)\cos2\varphi\right).
\]
\end{proof}

\section{Extension of the direction at blown-up singularities}
\label{Appendix:Extension_of_the direction}
\begin{lem}\label{L:blowup}
Let $L=(X,Y)$ be a proto-line-field on $(M,g)$ with a hyperbolic singularity at $p\in M$. Fix a system of coordinates $(x,y)$ such that $p=(0,0)$, $g(0,0)=\mathrm{id}$.
Assume that $Y(0,0)\not =0$ and consider the linear  proto-line-field $\bar{L}=(DX(0,0),Y(0,0))$.

For every $r>0$ small enough and $\theta \in \R$, 
let $\phi_L(r,\theta)$ and  $\phi_{\bar{L}}(\theta)$ in $\mathbb{R}\slash \pi \mathbb{Z}$ be defined by
$$
\phi_L(r,\theta)=
\angle_{g}
\left[
	\begin{pmatrix}
	1 \\ 0
	\end{pmatrix},B(L)(r\cos\theta,r\sin\theta)
\right]
\text{ and } 
\phi_{\bar{L}}(\theta)=
\angle_{\mathrm{Eucl}}
\left[
	\begin{pmatrix}
	1 \\ 0
	\end{pmatrix},B(\bar{L})(r\cos\theta,r\sin\theta)
\right].
$$
Then
$$
\phi_L(r,\theta) \underset{r\rightarrow 0}{\longrightarrow} \phi_{\bar{L}}(\theta),
\quad
\frac{\partial\phi_L}{\partial\theta}(r,\theta) \underset{r\rightarrow 0}{\longrightarrow} \frac{d\phi_{\bar{L}}}{d\theta}(\theta),\text{ and }\quad
\frac{\partial\phi_L}{\partial r}(r,\theta) \underset{r\rightarrow 0}{\longrightarrow} 0.
$$
\end{lem}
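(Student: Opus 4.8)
The plan is to reduce the three limits to the continuity and differentiability of the map sending the triple (direction of $X$, direction of $Y$, metric) at a point to the angle of the bisecting line, after rescaling $X$ by $1/r$ along each ray. Write $v(\theta)=(\cos\theta,\sin\theta)$ and $v^{\perp}(\theta)=(-\sin\theta,\cos\theta)$. Since $X(0,0)=0$ and, by hyperbolicity, $A:=DX(0,0)$ is invertible, Hadamard's lemma (applied twice) yields a smooth map $\rho$ on $(-\delta,\delta)\times\R$ with $X(r v(\theta))=rA\,v(\theta)+r^{2}\rho(r,\theta)$ and $\rho(0,\theta)=\tfrac12 D^{2}X(0,0)(v(\theta),v(\theta))$. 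Hence $X(r v(\theta))/r=A\,v(\theta)+r\,\rho(r,\theta)$ is smooth in $(r,\theta)$, extends to $r=0$ with value $A\,v(\theta)$, and, since $|A\,v(\theta)|$ is bounded below by the smallest singular value of $A$, is nonzero for small $r$ (uniformly in $\theta$, by $2\pi$-periodicity). Likewise $Y(r v(\theta))$ is smooth, tends to $Y(0,0)\neq 0$, and is nonzero for small $r$, while $g(r v(\theta))\to g(0,0)=\mathrm{id}$.

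Let $\beta(V,W,h)$ be the $h$-angle from $\binom 10$ to the line bisecting the nonzero vectors $V,W$, for $h$ a positive-definite symmetric form on $\R^{2}$; this is smooth on the indicated open set, is $0$-homogeneous in $V$ and in $W$, and has a local lift $\widetilde\beta$ to $\R$. For $r>0$, $0$-homogeneity gives $\widetilde\phi_{L}(r,\theta)=\widetilde\beta\big(X(r v(\theta))/r,\,Y(r v(\theta)),\,g(r v(\theta))\big)$, and by the previous step the three arguments extend smoothly to $r=0$ into the domain of $\widetilde\beta$. Thus $\widetilde\phi_{L}$ extends to a $C^{1}$ (indeed smooth) function on $[0,\delta)\times\R$ with $\widetilde\phi_{L}(0,\theta)=\widetilde\beta\big(A\,v(\theta),Y(0,0),\mathrm{id}\big)$, a lift of $\phi_{\bar L}(\theta)$; this is the first limit. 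Differentiating in $\theta$ and letting $r\to 0$, the slots for $Y$ and $g$ contribute $\partial_{\theta}[Y(rv)]=rDY(rv)v^{\perp}\to 0$ and $\partial_{\theta}[g(rv)]\to 0$, so $\partial_{\theta}\widetilde\phi_{L}\to D_{V}\widetilde\beta\big(Av,Y(0,0),\mathrm{id}\big)\cdot Av^{\perp}=\tfrac{d}{d\theta}\widetilde\beta\big(Av(\theta),Y(0,0),\mathrm{id}\big)=\dfrac{d\phi_{\bar L}}{d\theta}(\theta)$, the second limit.

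The delicate point is $\partial_{r}\phi_{L}(r,\theta)\to 0$. By the chain rule $\partial_{r}\widetilde\phi_{L}=D_{V}\widetilde\beta\cdot\partial_{r}[X(rv)/r]+D_{W}\widetilde\beta\cdot\partial_{r}[Y(rv)]+D_{h}\widetilde\beta\cdot\partial_{r}[g(rv)]$, and as $r\to 0$ the three increments converge to $\rho(0,\theta)$, $DY(0,0)v(\theta)$, $Dg(0,0)v(\theta)$ while the partials of $\widetilde\beta$ are taken at $\big(Av(\theta),Y(0,0),\mathrm{id}\big)$. The plan is to use that the bisector line depends on $V,W$ only through their $h$-directions, so $D_{V}\widetilde\beta$ and $D_{W}\widetilde\beta$ kill the radial parts of their increments, and then to show by a direct first-order computation that the remaining contributions cancel; equivalently, choosing coordinates in which $g$ is Euclidean to first order along the fixed ray, one computes $\partial_{r}$ of $\tfrac12(\arg X(rv)+\arg Y(rv))$ at $r=0$ and verifies the vanishing. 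I expect this computation to be the main obstacle; the other two limits are soft. Once $\widetilde\phi_{L}$ is $C^{1}$ on $[0,\delta)\times\R$ with $\partial_{r}\widetilde\phi_{L}(0,\theta)=0$, the even extension $\widetilde\phi_{L}(-r,\theta):=\widetilde\phi_{L}(r,\theta)$ is $C^{1}$, as used in Proposition~\ref{P:blowup}.
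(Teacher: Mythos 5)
Your treatment of the first two limits is correct, and it is a genuinely cleaner route than the paper's: the paper expands the implicit relation $\cos(\phi_X)\,g(e_2,X)=\sin(\phi_X)\,g(e_1,X)$ term by term with $O(r)$ and $O(r^2)$ estimates, whereas you package everything into the smooth dependence of the bisector angle $\widetilde\beta$ on the rescaled data $\bigl(X(rv)/r,\,Y(rv),\,g(rv)\bigr)$, which extends smoothly through $r=0$. The genuine gap is the third limit, which you explicitly leave open while predicting that ``the remaining contributions cancel.'' They do not, and no direct computation will make them: your own chain-rule expansion already identifies the limit as
\[
\partial_r\widetilde\phi_L(0,\theta)=\frac12\,\frac{Av\wedge\tfrac12 D^2X(0,0)(v,v)}{\|Av\|^2}+\frac12\,\frac{Y(0,0)\wedge DY(0,0)v}{\|Y(0,0)\|^2}+\bigl(\text{a term in } Dg(0,0)\bigr),
\]
where $a\wedge b=a_1b_2-a_2b_1$, and this quantity has no reason to vanish.

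Concretely, take $M=\R^2$ with the Euclidean metric, $X(x,y)=(x+x^2,\,y)$ and $Y\equiv(1,0)$; then $DX(0,0)=\mathrm{id}$ is hyperbolic, $Y(0,0)\neq0$, and $\phi_L=\frac12\phi_X\pmod{\pi}$. Along $\theta=\pi/4$ one has $\tan\phi_X(r,\pi/4)=\bigl(1+r/\sqrt2\bigr)^{-1}$, hence $\phi_X(r,\pi/4)=\pi/4-r/(2\sqrt2)+O(r^2)$ and $\partial_r\phi_L(r,\pi/4)\to-1/(4\sqrt2)\neq0$. So the cancellation you are counting on fails already for a quadratic perturbation of a linear $X$, and your plan for the third limit cannot be completed as written. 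Your instinct that this is the delicate point is in fact vindicated by the paper's own argument: there the conclusion $\partial_r\phi_X=O(r)$ rests on the estimate $\partial_r(g(e_i,X))=r\langle\bar e_i,\bar X(1,\theta)\rangle+O(r^2)$, which is the expansion of $g(e_i,X)$ itself rather than of its $r$-derivative; differentiating correctly gives $\langle\bar e_i,\bar X(1,\theta)\rangle+O(r)$, leaving an $O(r)$ numerator over a denominator of order $r$, i.e.\ only boundedness of $\partial_r\phi_X$. What survives without the third limit is what Proposition~\ref{P:blowup} actually uses: $\widetilde\phi_L$ is $C^1$ up to $r=0$ from the right with bounded $\partial_r\widetilde\phi_L$, and $DP(0,\theta_0)$ remains triangular with the same diagonal entries, so the saddle/node dichotomy is unaffected even if the $(2,1)$ entry does not vanish; but the even reflection in $r$ is then only Lipschitz, not $C^1$.
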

\begin{proof}
Let $\bar{X}(x,y)=DX(0,0)	\begin{pmatrix}
	x \\ y
	\end{pmatrix}
  $ and define $\phi_X(r,\theta)$, $\phi_Y(r,\theta)$,  ${\phi}_{\bar X}(\theta)$, and $\alpha$ in $\mathbb{R}\slash 2\pi \mathbb{Z}$ by
\begin{align*}
\phi_X(r,\theta)&=
\angle_{g}
\left[
	\begin{pmatrix}
	1 \\ 0
	\end{pmatrix},X(r\cos\theta,r\sin\theta)
\right],&\quad 
{\phi}_{\bar X}(\theta)&=
\angle_{\mathrm{Eucl}}
\left[
	\begin{pmatrix}
	1 \\ 0
	\end{pmatrix},\bar X(\cos\theta,\sin\theta)
\right],\\
\phi_Y(r,\theta)&=
\angle_{g}
\left[
	\begin{pmatrix}
	1 \\ 0
	\end{pmatrix},Y(r\cos\theta,r\sin\theta)
\right],&
\alpha &=
\angle_{\mathrm{Eucl}}
\left[
	\begin{pmatrix}
	1 \\ 0
	\end{pmatrix},Y(0,0)	
\right].
\end{align*}

Since $\phi_L=\frac{1}{2}\phi_X+\frac{1}{2}\phi_Y\pmod{\pi}$ and $\phi_{\bar{L}}=\frac{1}{2}\bar{\phi}_X+\frac{1}{2}\bar{\phi}_Y\pmod{\pi}$, we are left to prove that 
$$
\phi_X(r,\theta) \underset{r\rightarrow 0}{\longrightarrow} {\phi}_{\bar X}(\theta), \quad
\frac{\partial \phi_X}{\partial\theta}(r,\theta) \underset{r\rightarrow 0}{\longrightarrow} \frac{d \phi_{\bar X}}{d\theta}(\theta),
\quad
\frac{\partial \phi_X}{\partial r}(r,\theta)\underset{r\rightarrow 0}{\longrightarrow} 0,
$$
and 
$$
\phi_Y(r,\theta) \underset{r\rightarrow 0}{\longrightarrow} \alpha, \quad
\frac{\partial \phi_Y}{\partial\theta}(r,\theta) \underset{r\rightarrow 0}{\longrightarrow} 0,
\quad
\frac{\partial \phi_Y}{\partial r}(r,\theta)\underset{r\rightarrow 0}{\longrightarrow} 0.
$$

We are going to give a proof for $\phi_X$ only, the one for $\phi_Y$ being analogous. 

Using the local coordinates $(x,y)$, let us identify vector fields 
with their coordinate representation. 
Denote by $(e_1,e_2)$ an oriented orthonormal frame for $g$ in a neighborhood of $p$ such that $e_1$ is a positive multiple of $\begin{pmatrix}
	1 \\ 0
	\end{pmatrix}$.
Since $g(0,0)=\mathrm{id}$, then $e_1(0,0)=\begin{pmatrix}
	1 \\ 0
	\end{pmatrix}=\bar e_1$ and $e_2(0,0)=\begin{pmatrix}
	0 \\ 1
	\end{pmatrix}=\bar e_2$. 
For $i=1,2$,
\begin{align*}
g(e_i,X)-\langle \bar e_i,\bar X\rangle
&=
g(e_i,X)-\langle e_i,X \rangle
+
\langle e_i-\bar{e}_i,X\rangle 
+
\langle \bar{e}_i,X-\bar{X}\rangle.
\end{align*}

By an abuse of notation, we write in what follows $X(r,\theta)$ 
for  $X(r\cos\theta,r\sin\theta)$ and 
similarly for $e_1$, $e_2$ and $\bar X$.
By definition of $\bar{X}$, we have $\left\| (X - \bar{X})(r,\theta) \right\|_{2}=O(r^2)$, so that  $\langle \bar{e}_i,X-\bar{X}\rangle =O(r^2)$.
Likewise $\left\| e_i(r,\theta) - \bar{e}_i \right\|_{2}=O(r)$ and $\left\| X(r,\theta) \right\|_{2}=O(r)$, so that $\langle e_i-\bar{e}_i,X\rangle =O(r^2)$. Finally $| g(W,Z)-\langle W,Z\rangle |\le \varepsilon(r)\|W\|_2 \|Z\|_2 $, with $\varepsilon(r)=O(r)$, for any pair of vector fields $W,Z$, so that $g(e_i,X)-\langle e_i,X\rangle =O(r^2)$.

In conclusion, $g(e_i,X)-\langle \bar{e}_i,\bar{X}\rangle =O(r^2)$. 
By definition  of $\phi_X$, we have
\begin{equation}\label{E:defTh}
\cos (\phi_X) g(e_2, X) = \sin (\phi_X)  g(e_1, X), 
\end{equation}
which can be rewritten as 
\begin{multline}\label{E:difLin}
\cos (\phi_X)\langle \bar{e}_2,\bar{X}\rangle +\cos (\phi_X)( g(e_2, X)-\langle \bar{e}_2,\bar{X}\rangle) \\= \sin (\phi_X)\langle \bar{e}_1,\bar{X}\rangle + \sin (\phi_X) (g(e_1, X) -\langle \bar{e}_1,\bar{X}\rangle).
\end{multline}
Dividing  equation \eqref{E:difLin} by $r$,
we get
\begin{align}\label{eq:o0}
\phi_X(r,\theta)&=\phi_{\bar{X}}(\theta)+O(r).
\end{align}

Regarding the partial derivatives of $\phi_X$, by differentiating \eqref{E:defTh} we get
\begin{align}
\partial_\theta \phi_X
\left(
	\cos \phi_X
	g(e_1, X)
	+
	\sin \phi_X
	g(e_2, X)
\right)
& =
\cos  \phi_X
\partial_\theta
(g(e_2, X))
-
\sin  \phi_X
\partial_\theta
(g(e_1, X)).\label{eq:derphi}
\end{align}

We have
\[
\partial_\theta (g(e_i, X))
=
(\partial_\theta g)(e_i, X)
+
g(\partial_\theta e_i, X)
+
g(e_i, \partial_\theta X).
\]
By singularity of the polar parameterization, 
we have $\partial_\theta g=O(r)$ and $\partial_\theta e_i=O(r)$. 
Moreover, $g(e_i, \partial_\theta X)(r,\theta)=\langle \bar{e}_i, \partial_\theta \bar{X}(r,\theta)\rangle +O(r^2)=r\langle \bar{e}_i, \partial_\theta \bar{X}(1,\theta)\rangle +O(r^2)$.

By definition of $\phi_X$, $\cos \phi_X g(e_1, X)+\sin \phi_X g(e_2, X)=\left\| X(r,\theta) \right\|_g=r\left\|r\bar X(1,\theta) \right\|_2+r $.
Hence we deduce from \eqref{eq:o0} and \eqref{eq:derphi} that
\begin{align*}
\frac{
\partial
	\phi_X
}{
	\partial \theta 
}
(r,\theta)
&=
\frac{
	\cos {\phi}_{\bar X}(\theta)
	\langle \bar{e}_2, \partial_\theta \bar{X}(1,\theta)\rangle
	-
	\sin {\phi}_{\bar X}(\theta)
	\langle \bar{e}_2, \partial_\theta \bar{X}(1,\theta)\rangle 
	+O(r)
}{
	\left\| 
	\bar X(1,\theta) 
	\right\|_2+O(r)
    }
 =
\frac{
	d \bar{\phi}_{X}
}{
	d\theta 
}
(\theta)+O(r).
\end{align*}

Similarly, we have 
\[
\partial_r \phi_X
\left\|
	X
\right\|_g
=
\cos  \phi_X
\partial_r
(g(e_2, X))
-
\sin  \phi_X
\partial_r
(g(e_1, X)).
\]

Since
\begin{align*}
\partial_r (g(e_i, X))(r,\theta)=r\langle \bar{e}_i, \bar{X}(1,\theta)\rangle +O(r^2)
\end{align*}
and
\[
\cos {\phi}_{\bar X}\langle \bar{e}_2, \bar{X}(1,\theta)\rangle -\sin {\phi}_{\bar X}\langle\bar{e}_1, \bar{X}(1,\theta)\rangle =0,
\]
we have
\begin{align*}
\frac{\partial \phi_X}{\partial r} 
=
\frac{
	\cos  \phi_X
	\partial_r
	(g(e_2, X))
	-
	\sin  \phi_X
	\partial_r
	(g(e_1, X))
}{
	\left\| 
	X
	\right\|_g
}
= O(r).
\end{align*}
\end{proof}

\bibliographystyle{plain}
\bibliography{biblio}

\end{document}